\numberwithin{equation}{section}
\declaretheoremstyle[
  bodyfont=\normalfont\itshape,
  headformat=\NAME\ \NUMBER\NOTE,
]{myplain}
\declaretheoremstyle[
  headformat=\NAME\ \NUMBER\NOTE,
]{mydefinition}
\newcommand{\envqed}{{\lower-0.3ex\hbox{$\triangleleft$}}}
\declaretheorem[style=myplain,numberwithin=section]{theorem}
\declaretheorem[style=myplain,numberlike=theorem]{proposition}
\declaretheorem[style=myplain,numberlike=theorem]{corollary}
\declaretheorem[style=mydefinition,numberlike=theorem,qed=\envqed]{definition}
\declaretheorem[style=mydefinition,numberlike=theorem,qed=\envqed]{remark}
\declaretheorem[style=mydefinition,numberlike=theorem,qed=\envqed]{example}
\newcommand*\patchAmsMathEnvironmentForLineno[1]{%
  \expandafter\let\csname old#1\expandafter\endcsname\csname #1\endcsname
  \expandafter\let\csname oldend#1\expandafter\endcsname\csname end#1\endcsname
  \renewenvironment{#1}%
     {\linenomath\csname old#1\endcsname}%
     {\csname oldend#1\endcsname\endlinenomath}}%
\newcommand*\patchBothAmsMathEnvironmentsForLineno[1]{%
  \patchAmsMathEnvironmentForLineno{#1}%
  \patchAmsMathEnvironmentForLineno{#1*}}%
\newcommand{\R}{\mathbb R}
\renewcommand{\L}{\mathcal L}
\newcommand{\ww}[1]{\underline{#1}}
\renewcommand{\vec}[1]{\ww{#1}}
\NewDocumentCommand{\mat}{mo}{%
	\IfValueTF{#2}{%
		\underline{\underline{#1}}{#2}
	}{%
		\underline{\underline{#1}}\,
	}%
}
\def\bbc{\underline{\boldsymbol{\alpha}}}
\def\bc{\boldsymbol{\alpha}}
\def\bu{\boldsymbol{u}}
\def\bbd{\underline{\mathbf{d}}}
\def\bphi{\underline{\phi}}
\def\M{\underline{\underline{\mathrm{M}}}}
\def\TMM{\textsc{TMM}}
\def\DeC{\textsc{DeC}}
\def\ADER{\textsc{ADER}}
\def\sDeC{\textsc{sDeC}}
\def\eq{\textsc{EQ}}
\def\GLB{\textsc{GLB}}
\def\NODES{\textsc{NODES}}
\newtheorem{prop}{Proposition}
\newcommand{\orcid}[1]{ORCID:~\href{https://orcid.org/#1}{#1}}
\def\namedlabel#1#2{\begingroup
	#2%
	\def\@currentlabel{#2}%
	\phantomsection\label{#1}\endgroup
}
\begin{document}

\thispagestyle{empty}
\setcounter{page}{0}
\clearpage

\begin{frontmatter}

\title{Analysis for Implicit and Implicit-Explicit ADER and DeC Methods for Ordinary Differential Equations, Advection-Diffusion and Advection-Dispersion Equations}

\author[1,2]{Philipp~Öffner\fnref{orcidPO}}
\fntext[orcidPO]{\orcid{0000-0002-1367-1917}}
\address[1]{Institute of Mathematics, Johannes Gutenberg University Mainz, Staudingerweg 9, 55128 Mainz, Germany}
\address[2]{Institute of Mathematics, TU Clausthal, Germany}

\author[1]{Louis Petri}
\ead{lpetri01@uni-mainz.de}
\cortext[cor1]{Corresponding author}

\author[3]{Davide Torlo\fnref{orcidDT}}
\fntext[orcidDT]{\orcid{0000-0001-5106-1629}}
\address[3]{Dipartimento di Matematica Guido Castelnuovo, Università di Roma La Sapienza,  Roma, Italy}

\begin{abstract}
In this manuscript, we present the development of implicit and implicit-explicit ADER and DeC methodologies within the DeC framework using the two-operators formulation, with a focus on their stability analysis both as solvers for ordinary differential equations (ODEs) and within the context of linear partial differential equations (PDEs).
To analyze their stability, we reinterpret these methods as Runge-Kutta schemes and uncover significant variations in stability behavior, ranging from A-stable to bounded stability regions, depending on the chosen order, method, and quadrature nodes. 
This differentiation contrasts with their explicit counterparts.
When applied to advection-diffusion and advection-dispersion equations employing finite difference spatial discretization, the von Neumann stability analysis demonstrates stability under CFL-like conditions. 
Particularly noteworthy is the stability maintenance observed for the advection-diffusion equation, even under spatial-independent constraints.
Furthermore, we establish precise boundaries for relevant coefficients and provide suggestions regarding the suitability of specific schemes for different problem.

\end{abstract}

\begin{keyword}
 Implicit-Explicit Methods \sep
  Deferred Correction \sep
  ADER \sep
  Stability  \sep
  Advection-Diffusion  \sep
  Advection-Dispersion 
  \MSC
  65M06 \sep 
  65M20 \sep 
65L05 \sep 
 65L06 \sep 
65L07 
\end{keyword}

\end{frontmatter}

\section{Introduction} 
\label{sec:introduction} 
Many systems of time-dependent differential equations can be separated into multiple parts that differ in their stiffness. For such systems, using implicit-explicit (IMEX) time-marching methods \cite{pareschi2000implicit} is of paramount importance to guarantee stability and accuracy in many applications.

At the same time, high-order time-marching methods are sought for their efficiency and to match with the spatial discretization order in time-dependent partial differential equations (PDEs). Explicit high-order ADER and deferred correction (DeC) methods, due to their automatic construction, emerge as suitable alternatives to the traditional Runge-Kutta (RK) methods and have been extensively explored in various studies.
The explicit DeC method, introduced by Dutt et al. \cite{dutt2000dec} and then reinterpreted by Abgrall \cite{abgrall2017dec}, is an explicit, arbitrarily high-order method for ODEs. Further extensions of DeC, including implicit, semi-implicit and modified Patankar versions, are available in the literature \cite{christlieb2010integral,minion2003dec,offner2019arbitrary,abgrall2022relaxation,layton2004conservative,speck2015multi}. 
The ADER method was originally developed for hyperbolic systems exploiting the Cauchy-Kovalevskaya theorem \cite{ADERHistorical2, ADERHistorical1,titarev2002ader}, then reinterpreted  as a space-time discontinuous Galerkin (DG) method, which is solved through a fixed-point iteration procedure \cite{ADERModern,zbMATH07627644,dumbser2007FVStiff,boscheri2014direct,micalizzi2023efficient,veiga2023improving,Han_Veiga_2021}.


In this research, we present an detailed investigation of both implicit and IMEX versions of ADER and DeC, investigating their efficacy as solvers for ordinary differential equations (ODEs) and in the context of linear  advection-diffusion or advection-dispersion PDEs.
Building upon prior work \cite{Han_Veiga_2021, minion2003dec, abgrall2018asymptotic, dumbser2007FVStiff}, which has explored IMEX descriptions of DeC and ADER, we expand our analysis to encompass the most used methods, employing varying quadrature points and levels of accuracy. Additionally, we explore their IMEX stability following the approach of previous studies \cite{Hundsdorfer,liotta2000central, minion2003dec}, uncovering notable discrepancies among them, ranging from bounded stability regions to A-stable ones. This diverges significantly from the behavior observed in explicit versions \cite{Han_Veiga_2021}.

Extending our investigation to the PDE case and inspired by  \cite{TanChenShu_ImEx_Stability}, 
we conduct a von Neumann analysis for the presented IMEX time discretizations, paired with finite difference spatial discretizations of corresponding accuracy levels.
We find that the stability regions are bounded by CFL-type conditions as well as simple conditions on $\Delta t$ for the advection--diffusion case. 
 

The analysis of advection-dispersion presents less definitive outcomes, with only a few cases indicating conditions solely influenced by spatial discretization. However, these findings align with the stability regions observed in the ODE case.


The structure of the paper is as follows. In Sections~\ref{sec:dec} and \ref{sec:ader}, we introduce the implicit and IMEX DeC and ADER methods, respectively, and incorporate them into the RK framework. Additionally, we present theoretical stability results for the pure implicit ADER method.
In Section~\ref{sec:convergence}, we establish the high order accuracy of both the implicit and IMEX ADER and DeC methods. Following this, in Section~\ref{sec: stability_analysis_ODE}, we delineate their stability regions.
Next, in Sections~\ref{sec: advection_diffusion} and \ref{sec:PDE_adv_disp} we extend the stability analysis to the PDE scenario by applying our IMEX methods to advection-diffusion and advection-dispersion equations.
In Section~\ref{sec:numerics}, we finally present several numerical examples aimed at validating the stability and convergence analyses, while in Section~\ref{sec:conclusion} we summarize the conclusions drawn from our deep analysis.

	\section{Deferred Correction}
	\label{sec:dec}
In this section, we present the DeC in its explicit, implicit, and IMEX versions using the notation introduced in \cite{Han_Veiga_2021}.
Consider the system of ODEs
\begin{equation}\label{eq:scalarODE}
	\bc'(t)-F(\bc)=0,
\end{equation}
with $\bc:[0,T]\to \R^I$.
Given a time interval $[t_n, t_{n+1}]$ with length $\Delta t$, we subdivide  
it into $M$ subintervals  $\lbrace [t_m^{m-1},t_n^{m}]\rbrace_{m=1}^M$,
where $t_n^{0} = t_n$ and $t_n^{M} = t_{n+1}$. DeC methods are one step methods, hence we want to obtain $\bc_{n+1} \approx \bc(t_{n+1})$ from $\bc_{n} \approx \bc(t_{n})$.
We mimic for every 
subinterval $[t_n^0, t_n^m]$ the Picard--Lindel\"of theorem.
We drop the dependency on the timestep $n$ for subtimesteps $t_n^{m}$ 
and associated substates $\bc_n^{m}$, such that 
\begin{equation*}
	t_n=t_n^{0}=t^0, \ t_n^1=t^1, \ t_n^{2}=t^2, \hdots, \ t_n^{M}=t^M=t_{n+1}.
\end{equation*}

Then, we introduce the $\L^2$ operator, which represents an implicit high order discretization of ODE obtained integrating \eqref{eq:scalarODE} in each subinterval $[t^0,t^m]$ and applying a quadrature formula, 
\begin{equation}\label{eq:L2}
	\L^2(\bc^0, \dots, \bc^M) :=
	\begin{cases}
		\bc^M-\bc_n - \Delta t\sum_{r=0}^M \theta_r^M F(\bc^r)\\
		\vdots\\
		\bc^0-\bc_n - \Delta t\sum_{r=0}^M \theta_r^0 F(\bc^r)
	\end{cases}.
\end{equation}
We obtain this operator by applying to $\left(F(\bc^0), \hdots, F(\bc^M)\right)$ an interpolation polynomial of degree $M$.
The term $\theta_r^m := \frac{1}{\Delta t}\int_{t_n}^{t_n^{m}} \phi_r(s) ds$ denotes the weights, which can be obtained with Lagrange polynomials $\lbrace \phi_r \rbrace_{r=0}^M$ in the subnodes $\lbrace t^m \rbrace_{m=0}^M$, 
where $\phi_r(t^{m})=\delta_{r,m}$ and $\sum_{r=0}^M \phi_r(s) \equiv 1$ for any $s\in [0,1]$. 
We notice that $\L^2=0$ with $\bc_{n+1}=\sum_{r=0}^M\phi_r(t_{n+1})\bc^{r}$ is a collocation method and, when using Gauss--Lobatto nodes, it coincides with Lobatto IIIA schemes~\cite{veiga2023improving}.

\subsection{Explicit DeC}
To obtain an explicit versions of the DeC, we introduce $\L^1$, an explicit first order approximation of the $\L^2$ operator following \cite{abgrall2017dec, offner2023approximation}:
\begin{equation}\label{eq:L1}
\L^1(\bc^0, \dots, \bc^M) :=
\begin{cases}
\bc^M-\bc_n -  \Delta t\beta^M F(\bc_n) \\
\vdots\\
\bc^0- \bc_n -  \Delta t \beta^0 F(\bc_n)
\end{cases},
\end{equation}
with coefficients $\beta^m:= \frac{t_n^m-t_n}{\Delta t}$.
To simplify the notation, we introduce the vector of states for the variable $\bc$ at all subtimesteps
\begin{align}\label{eq:definition_bbc}
&\bbc :=  (\bc^0, \dots, \bc ^M) \in \R^{M\times I}, \text{ such that }\\
&\L^1(\bbc) := \L^1(\bc^0, \dots, \bc^M) \text{ and } \L^2(\bbc) := \L^2(\bc^0, \dots, \bc^M) .
\end{align}
Now, the DeC algorithm uses a combination of the $\L^1$ and $\L^2$ operators to recursively approximate $\bbc^*$, the high order accurate numerical solution of
the $\L^2(\bbc^*)=0$ scheme. We denote by $Y$ the order of accuracy of the solution $\bbc^*$. 
This is contingent upon the nodes selected \cite{torlo2022}, where we have $Y=M+1$ for equispaced nodes and $Y=2M$ for Gauss-Lobatto nodes.
In detailing the process, for each variable, we need to reference both the $m$-th subnode and the $k$-th iteration of the DeC algorithm, denoted by $\bc^{m,(k)} \in \R^I$.
Finally, the DeC method can be written as
\begin{equation}\label{DeC_method}
\begin{split}
&\text{\textbf{DeC Algorithm}}\\
&\bc^{m,(0)}:=\bc_n,\quad m=1,\dots, M,\\
&\L^1(\bbc^{(k)})=\L^1(\bbc^{(k-1)})-\L^2(\bbc^{(k-1)}), \quad \text{ for }k=1,\dots,K.
\end{split}
\end{equation}
The DeC method attains an order of accuracy $\min(K,Y)$, enhancing its accuracy by one with each iteration \cite{abgrall2017dec}. Therefore, selecting $K= Y$ is optimal.
It's important to note that only the explicit operator $\L^1$ needs to be solved, while $\L^2$ is solely evaluated using the previously computed predictions $\bbc^{(k-1)}$.
\begin{remark}[Variations of DeC]
	The presented DeC algorithm is just one of a whole family of DeC methods. For example, instead of considering integrations on $[t^0,t^m]$ in the $m$'th equation, we could switch to the smaller intervals $[t^{m-1},t^m]$ like in \cite{minion2003dec,dutt2000dec,torlo2022}, changing the $m$'th line of the operators to
	\begin{align*}
	\mathcal{L}^{1,m}(\bc^0, \dots, \bc^M) &=
	\bc^m-\bc^{m-1} - \Delta t \gamma^m  F(\bc^{m-1}),\\
	\mathcal{L}^{2,m}(\bc^0, \dots, \bc^M) &=
	\bc^m-\bc^{m-1} - \Delta t\sum_{r=0}^M \delta_r^m F(\bc^r),
	\end{align*} 
	with $\gamma^m:= \frac{t_n^m-t_n^{m-1}}{\Delta t}$ and $\delta_r^m=\frac{1}{\Delta t}\int_{t_n^{m-1}}^{t_n^{m}}\phi_r(s)ds$. 
	The iteration process \eqref{DeC_method} with these $\mathcal{L}^1$ and $\mathcal{L}^2$ operators does not change. Due to the smaller steps, this method is also referred to as the sDeC algorithm. One downside of the sDeC algorithm is the necessity of $\bc^{m-1,(k)}$ to calculate $\bc^{m,(k)}$, which does not allow to use parallel computation on the different subnodes, which is possible for the DeC with larger subintervals presented above.\\
	In addition, we note that instead of using explicit Euler steps inside the $\mathcal{L}^1$ operators, other explicit RK methods can be applied inside the $\mathcal{L}^1$ operator \cite{tang2013high, christlieb2010integral}. Here, additional problems may rise. For a detailed overview of the different variations of DeC, we refer to the nice overview article \cite{dec_overview} and the references therein. 
	\end{remark}
\subsection{Implicit and IMEX DeC}
In this section, we will construct the implicit and IMEX DeC methods using the presented framework. Consider the ODE \eqref{eq:scalarODE}, where $F(\bc)$ is a stiff term. 
We proceed by taking an implicit version of the $\mathcal{L}^1$ operator, which corresponds to implicit Euler steps at each subinterval, for brevity, we will just describe the $m$'th equation for $m=0,\dots,M$
\begin{equation}\label{eq:ImL1}
\mathcal{L}^{1,m}(\bc^0, \dots, \bc^M) :=
\bc^m-\bc^0 - \beta^m \Delta t F(\bc^m)
\end{equation}
and assembling the implicit DeC method as in \eqref{DeC_method}.\\
If we have a problem, whose right-hand side can be separated into the sum of a stiff term $S(\bc)$ and a non-stiff term $G(\bc)$, i.e.
\begin{equation}\label{eq:IMEXODE}
\partial_t\bc=S(\bc)+G(\bc),
\end{equation} 
we create an implicit-explicit DeC (IMEX DeC) method, by adding up the implicit and explicit treatments of the $\mathcal{L}^1$ operator, obtaining for $m=0,\dots,M$
\begin{equation}
\mathcal{L}^{1,m}(\bc^0, \dots, \bc^M) :=
\bc^m-\bc^0 - \beta^m \Delta t S(\bc^m)- \beta^m \Delta t G(\bc^0),
\end{equation}
which we can substitute into the known correction procedure \eqref{DeC_method}.
In case of nonlinear stiff terms $S$, a further linearization of $S$ could be introduced in the definition of $\L^1$ \cite{Han_Veiga_2021}.

\subsection{Implicit and IMEX DeC as RK}
In this section, we rewrite the DeC methods presented above into RK methods to study their stability, as done in \cite{torlo2022}. 
The DeC has the advantage that one does not need to specify the coefficients for every order of accuracy as usually necessary in classical RK methods, see for example \cite[Remark 4.3]{abgrall2018asymptotic}.
This is done automatically, through the quadrature weights $\Theta$ and $\beta$, which fully determine the coefficients of the corresponding Butcher Tableau. 
Let us consider our version of DeC and rewrite it in a Runge--Kutta method with $Z$ stages defined by its Butcher tableau
\begin{equation}
	\begin{cases}
		\bu^{(s)} = \bc^n +\Delta t \sum_{i=1}^{Z} A_i^s G(\bu^{(i)}),\quad s=1,\dots, Z,\\
		\bc^{n+1} = \bc^n +\Delta t \sum_{i=1}^Z b_i  G(\bu^{(i)}),
	\end{cases} \qquad \begin{array}{c | c}
	c& \mat{A}
	\\ \hline
	& b
	\end{array}.
\end{equation}

The process of rewriting DeC as Runge-Kutta schemes is elaborated in detail in \cite{torlo2022,abgrall2022relaxation}. Here, we emphasize the final form for comparison with the implicit and IMEX versions\footnote{Note that the re-interpretation of DeC as a RK scheme is not new and it was applied in various contexts, e.g. \cite{zbMATH06856490}.}.
We introduce $\vec{\beta}  = \lbrace \beta^i \rbrace_{i=0}^M$ as the vector of the $\L^1 $ operator coefficients, and we define the matrix $\mat{\theta} = \lbrace \theta_{r}^m \rbrace_{m=0,\dots,M; r = 0,\dots,M}$. 
Finally, we introduce the vector $\vec{\theta}^M = (\theta^M_0, \dots, \theta^M_M)$ for the final update.
%
The Butcher tableau for an arbitrary DeC approach is given by
\begin{equation}\label{eq:DeC_RK}
\begin{aligned}
\begin{array}{c|ccccccc}
	0 &  &&&&&&\\
\vec{\beta} & \vec{\beta} &  &   & & & & \\
\vec{\beta}  & \vec{0} &   \mat{{\theta}} & & & & & \\
\vdots &  \vec{0}& \mat{0}  &\mat{{\theta}}   && & & \\
\vdots & \vec{0} &  \mat{0}   &   \mat{0}  & \mat{{\theta}}  &  &  & \\
\vdots &  \vdots  &  \vdots &  \vdots &  \ddots  &  \ddots& &  \\
\vec{\beta} & \vec{0} &  \mat{0}  &  \hdots &  \hdots & \mat{0}  &  \mat{{\theta}} \\
\hline
&  0 & \vec{0}^T    & \hdots  &   &   \hdots &    \vec{0}^T& \vec{\theta}^{M} 
\end{array}.
\end{aligned}
\end{equation}
Let us notice that the first iteration is derived by a simplification that can be obtained noticing that $\sum_{r=0}^M \theta^m_r = \beta^m$, thanks to the properties
\begin{equation}\label{eq:sumoverthetaeqbeta}
\sum_{r=0}^{M}\phi_r(s)=1, \quad	\bc^{m,(0)}=\bc^0, \ \ 
m=0,\hdots ,M.
\end{equation}
Moreover, since we have chosen $t^0_n=t_n$, there are several trivial stages where the corresponding lines of $A$ is composed only of 0s.  These lines can be simplified as in \cite{Han_Veiga_2021,torlo2022}.
\begin{remark}[Number of stages]
	To obtain order $p$, we require $Y\stackrel{!}{=}K\stackrel{!}{=}p$, which means $M=p-1$ subtimesteps for equispaced nodes and $M=\lfloor \frac{p}{2}\rfloor$ for Gauss--Lobatto nodes, and $K=p$ corrections. 
	In total, considering all nontrivial stages and excluding the final iteration where only the last sub-time node is pertinent, we arrive at $Z=M(K-1)+1$ stages, which equates to $Z=(p-1)^2+1$ stages for equispaced nodes and $Z=\lfloor \frac{p}{2}\rfloor(p-1)+1$ for Gauss-Lobatto nodes \cite{veiga2023improving}.
\end{remark}
For the implicit case, the DeC iteration process for every $m=0,\dots,M$ and every iteration $k=1,\dots , K$ reads
\begin{equation}\label{eq_help}
\begin{aligned}
&\mathcal{L}^{1,m}(\vec{\bc}^{(k)})=\mathcal{L}^{1,m}(\vec{\bc}^{(k-1)})-\mathcal{L}^{2,m}(\vec{\bc}^{(k-1)})\\
\Leftrightarrow  \quad &
\bc^{m,(k)}=\bc_n+\Delta t \left[\left(\sum\limits_{r=0, \,r\neq m}
^{M}\theta_r^mF(\bc^{r,(k-1)})\right)
+\left(\theta_m^m-\beta^m\right)F(\bc^{m,(k-1)})+\beta^mF(\bc^{m,(k)})\right].
\end{aligned}
\end{equation}
From \eqref{eq_help}, we derive the blocks of the RK matrix $\mat{A}$, keeping in mind that for the first iteration $k=1$ many terms  simplify due to \eqref{eq:sumoverthetaeqbeta}. We obtain 
\begin{equation*}
\bc^{m,(1)}=
\bc^0+\Delta t \beta^mF(\bc^{m,(1)}).
\end{equation*}
The final update is  given by $\bc^{n+1}:=\bc^{M,(K)}$.
This leads to the following RK Butcher tableau
\begin{align}\label{eq:ImDeC_RK_ButcherTableu}
\begin{array}{c|ccccccccc}
0 & 0 &   & &  & & &  & \\
\vec{\beta} & \vec{0} & \mat{B}  &   & & & & &  \\
\vec{\beta}  & \vec{0} &   \mat{{\theta}}-\mat{B} &\mat{B} & & & & &\\
\vdots & \vec{0} & \mat{0}  &\mat{{\theta}}-\mat{B} &\mat{B}& & & &\\
\vdots & \vec{0} &  \mat{0}   &   \mat{0}  & \mat{{\theta}}-\mat{B} &\mat{B}  &  & &\\
\vdots &  \vdots  &  \vdots &  \vdots &  \ddots  &  \ddots& \ddots & & \\
\vec{\beta} & \vec{0} &  \mat{0}  &  \hdots &  \hdots & \mat{0}  &  \mat{{\theta}}-\mat{B} &\mat{B} & \\
1&  0 & \vec{0}^T    & \hdots  &   &   \hdots &    \vec{0}^T& \vec{{\theta}}^M-\vec{B}^M &\beta^M \\
\hline
&  0 & \vec{0}^T    & \hdots  &   &   \hdots &\vec{0}^T &   \vec{{\theta}}^M-\vec{B}^M &\beta^M
\end{array},
\end{align}
with $B_{mr}= \delta_{mr} \beta^m$ for $m,r=0,\dots, M$ and $\delta_{mr}$ the Kronecker delta.
We notice that the ImDeC method is diagonally implicit and that the last stage coincide with the final update which makes the method stiffly accurate \cite[Proposition 3.8]{wanner1996solving}.\\
Next, we  describe the IMEX DeC in the notation of an IMEX RK scheme, i.e.,
\begin{equation}
\begin{cases}
\bu^{(s)} = \bc^n +\Delta t \sum_{i=1}^s A_i^s S(\bu^{(i)})+\Delta t \sum_{i=1}^{s-1} \hat{A}_i^s G(\bu^{(i)}),\quad s=1,\dots, Z,\\
\bc^{n+1} = \bc^n +\Delta t \sum_{i=1}^Z b_i S(\bu^{(i)})+\Delta t \sum_{i=1}^{Z} \hat{b}_i G(\bu^{(i)}),
\end{cases}
\end{equation}
where the matrices  $\mat{A},\mat{\hat{A}}$ and vectors $b,\hat{b}$  are provided in two Butcher tableaux
\begin{equation}\label{eq:IMEX_Butcher}
\begin{array}{c|c}
c & \mat{A}\\\hline
& b
\end{array}, \qquad
\begin{array}{c|c}
c & \mat{\hat{A}}\\\hline
& \hat{b}
\end{array}
\end{equation}
and directly correspond to the simpler implicit and explicit cases presented, with an extra stage in the explicit case to match the implicit description. Therefore, we obtain the following Butcher tableaux
\begin{equation}\label{eq:IMEX_DeC_RK}
\footnotesize
\begin{aligned}
\begin{array}{c|ccccccccc}
0 & 0 &   & &  & & &  & \\
\vec{\beta} & \vec{0} & \mat{B}  &   & & & & &  \\
\vec{\beta}  & \vec{0} &   \mat{{\theta}}-\mat{B} &\mat{B} & & & & &\\
\vdots & \vec{0} & \mat{0}  &\mat{{\theta}}-\mat{B} &\mat{B}& & & &\\
\vdots & \vec{0} &  \mat{0}   &   \mat{0}  & \mat{{\theta}}-\mat{B} &\mat{B}  &  & &\\
\vdots &  \vdots  &  \vdots &  \vdots &  \ddots  &  \ddots& \ddots & & \\
\vec{\beta} & \vec{0} &  \mat{0}  &  \hdots &  \hdots & \mat{0}  &  \mat{{\theta}}-\mat{B} &\mat{B} & \\
1&  0 & \vec{0}^T    & \hdots  &   &   \hdots &    \vec{0}^T& \vec{{\theta}}^M-\vec{B}^M &\beta^M \\
\hline
&  0& \vec{0}^T    & \hdots  &   &   \hdots &\vec{0}^T &   \vec{{\theta}}^M-\vec{B}^M &\beta^M
\end{array},
\quad
\begin{array}{c|ccccccccc}
0 & 0 &   & &  & & &  & \\
\vec{\beta} & \vec{\beta} &  &   & & & & &  \\
\vec{\beta}  & \vec{0} &   \mat{{\theta}} & & & & & &\\
\vdots & \vec{0} & \mat{0}  &\mat{{\theta}}   && & & &\\
\vdots & \vec{0} &  \mat{0}   &   \mat{0}  & \mat{{\theta}}  &  &  & &\\
\vdots &  \vdots  &  \vdots &  \vdots &  \ddots  &  \ddots& & & \\
\vec{\beta} & \vec{0} &  \mat{0}  &  \hdots &  \hdots & \mat{0}  &  \mat{{\theta}} & \\
1&  0 & \vec{0}^T    & \hdots  &   &   \hdots &    \vec{0}^T& \vec{\theta}_r^{M}& 0 \\
\hline
&  0 & \vec{0}^T    & \hdots  &   &   \hdots &    \vec{0}^T& \vec{\theta}_r^{M} & 0
\end{array}.
\end{aligned}
\end{equation}
\begin{remark}[Runge-Kutta for sDeC]
	Using the same properties and techniques, we can construct Butcher tableaux respectively for the sDeC, ImsDeC and IMEX sDeC. Their construction in the explicit case is performed in \cite{torlo2022} and we proceed similarly for the implicit and IMEX cases.
\end{remark}
Also in the implicit and IMEX cases, several trivial stages can be simplified in the Butcher tableaux \cite{torlo2022}.
	
	\section{ADER}
	\label{sec:ader}


In this section, we present the modern ADER introduced as space-time DG solver for hyperbolic PDEs in \cite{ADERModern} and adapted for ODEs in \cite{Han_Veiga_2021,veiga2023improving}. Starting with an $I$-dimensional system of ODEs \eqref{eq:scalarODE}, we consider the interval $T^n:=[t^n,t^{n+1}]$ where we represent $\bc(t)$ as a linear combination of $(M+1)$ basis functions
\begin{equation}\label{eq:basis_reconstruction}
\bc(t) = \sum_{m=0}^M \phi_m(t)\bc^m = \bphi(t)^T\bbc,
\end{equation}
where 
$\bphi = [ \phi_0, \dots , \phi_M ]^T:T^n\to \R^{M+1}$ is the vector of Lagrangian basis functions in some given nodes $\{t_m\}_{m=0}^M\subset T^n$, e.g. equispaced or Gauss-Lobatto nodes, and $\bbc=[ \bc^0, \dots, \bc^M]^T \in \R^{(M+1)\times I}$ is the vector of coefficients of the basis functions respectively. \\
As described in \cite{Han_Veiga_2021}, ADER is derived multiplying \eqref{eq:scalarODE} with a smooth test function and integrating over $T^n$ to obtain the weak formulation. We insert the reconstruction \eqref{eq:basis_reconstruction} in the weak formulation, we use as test functions the basis functions, we apply integration by parts in time and using the quadrature $\lbrace (w_q, x_q)\rbrace_{q=0}^Q$ in $[t_n,t_{n+1}]$ results in a system
\begin{equation}\label{eq:ADER_fix_point}
\M \bbc = \vec{r}(\bbc) \Longleftrightarrow \mathcal{L}^2(\bbc):=\M\bbc-\vec{r}(\bbc)\stackrel{!}{=}0.
\end{equation}
Thereby, the mass-matrix $\M \in \R^{(M+1)\times(M+1)}$ and the (nonlinear) right-hand side functional $\vec{r}(\bbc):\R^{(M+1)\times I}\to \R^{(M+1)\times I}$ are given by
\begin{align}\label{eq:MassmatrixAder}
\M_{m,l} :&= \phi_m(t_{n+1})\phi_l(t_{n+1})- \sum_{q=0}^Q \partial_t \phi_m(x_q) \phi_l(x_q) w_q\\
\label{eq:aderrhsMatrix}
\vec{r}(\bbc)_m:&=\phi_m(t_{n})\bc_n
+ \Delta t \underbrace{\sum_{q=0}^{Q} w_q \phi_m(x_q) \bphi(x_q)^T}_{=:\mat{R}^m_z} F(\bbc) , \quad m=0,\dots,M+1.
\end{align}
The right hand side can also be written in matricial form as $\vec{r}(\bbc) = \vec{\phi}(t^{n})\bc(t^n) + \Delta t \mat{R} F(\bbc).$
\subsection{Explicit ADER}
To obtain an explicit approximation of the system \eqref{eq:ADER_fix_point} for the unknown $\bbc$, ADER resorts to a fixed-point problem, whose solution will give us a high order $Y$ accurate solution in $t$. In particular, the order of accuracy will be $Y=M+1$ for equispaced nodes and quadrature, order $Y=2M$ for Gauss--Lobatto nodes and quadrature and, order $Y=2M+1$ for Gauss--Legendre nodes and quadrature \cite{veiga2023improving}. 
We will use the following iterative procedure and then we reconstruct $\bc_{n+1}\approx\bc(t^{n+1})$ as 
\begin{equation}\label{eq:fixpoint_iteration}
\begin{aligned}
&\text{\textbf{ADER Algorithm}}\\
&\bbc^{(0)}:=[\bc_n,\dots, \bc_n ]^T,\\
&\bbc^{(k)}=\M^{-1} \vec{r}(\bbc^{(k-1)}), \quad k=1,\dots,K, \\
&\bc_{n+1}=\bc_n+\Delta t \sum_{m=0}^M \int_{t_n}^{t_{n+1}}\phi_m(t) F(\bc^{(K),m})dt=\bc_n+\Delta t \underline{b}^TF(\bbc^{(K)}),
\end{aligned}
\end{equation}
with $b_i:=\int_{t^n}^{t^{n+1}} \phi_i(t)dt$.

\begin{remark}[ADER as DeC and order of accuracy of \eqref{eq:ADER_fix_point}]\label{rmk:ADERasDeC}
	As shown in \cite{Han_Veiga_2021,veiga2023improving}, 
	ADER can be written into the DeC formalism by defining
	\begin{equation}\label{eq:L1ADER}
	\mathcal{L}^1(\vec{\bc}):=\mat{M}\vec{\bc}-\vec{r}(\vec{1} \bc_n ),
	\end{equation}
	so that the DeC iterations \eqref{DeC_method} coincide with the fixed point iterations of \eqref{eq:fixpoint_iteration}, i.e.,
	 \begin{equation}
	 \mat{M}\vec{\bc}^{(k)}-\vec{r}(\vec{\bc}^{(k-1)})=0.
	 \end{equation}
	 If we set the starting values as $\bc^{(0),m}=\bc(t^n)$ for every $m$, like it is done in DeC, we obtain exactly the fixed-point iteration \eqref{eq:fixpoint_iteration} and therefore an equivalent definition of the ADER method.
	  This also tells us that the order of accuracy of \eqref{eq:fixpoint_iteration} with respect to the solution of $\L^2(\bbc^*)=0$ is $K$.
\end{remark}

\subsection{Implicit and IMEX ADER}
We start describing the implicit version of ADER (ImADER), considering \eqref{eq:scalarODE} with $F(\bc)$ stiff, by modifying the iterative process to
\begin{equation}\label{eq:imADER}
\vec{\bc}^{(k)} = \M^{-1}\vec{r}(\vec{\bc}^{(k)}) 
\end{equation}
or, as explained in Remark \ref{rmk:ADERasDeC}, with the ADER-DeC notation:
\begin{equation}\label{eq:L1IMADER}
\mathcal{L}^1(\bbc):=\M\bbc-\vec{\phi}(0)\bc_n-\Delta t \mat{R} F(\bbc).
\end{equation}
We soon realize that performing multiple corrections does not give us any advantages as \eqref{eq:imADER} does not depend on the previous iteration.
Hence, the construction of ImADER  does not seem purposeful, but it will become useful for the IMEX case, as presented in \cite{dumbser2007FVStiff} for PDE with stiff source terms or in \cite{Han_Veiga_2021} for ODEs. 

We consider the separated ODE system \eqref{eq:IMEXODE}. To construct the IMEX ADER $\L^1$ operator, we combine the implicit \eqref{eq:L1IMADER} and explicit \eqref{eq:L1ADER} treatments of the ADER iteration for the stiff and non-stiff terms, and get
\begin{equation}
\label{eq:IMEXADERL1}
\mathcal{L}^1(\vec{\bc}) :=  \M \vec{\bc} - \vec{\phi}(0)\bc_n -\Delta t \mat{R} S(\vec{\bc})-\Delta t \mat{R} G(\vec{1} \bc_n).
\end{equation}
This leads to the iterative process \eqref{eq:fixpoint_iteration} with the iteration given by
\begin{equation}
\begin{aligned}
&\M \vec{\bc}^{(k)} - \vec{\phi}(0)\bc_n -\Delta t \mat{R} S(\vec{\bc}^{(k)}) - \Delta t \mat{R} G(\vec{\bc}^{(k-1)})=0\\
\Longleftrightarrow\qquad &\vec{\bc}^{(k)} = \vec{1}\bc_n -\Delta t \M^{-1}\mat{R} S(\vec{\bc}^{(k)}) - \Delta t \M^{-1}\mat{R} G(\vec{\bc}^{(k-1)}),
\end{aligned}
\end{equation}
which is in fact just an additive combination of the implicit and explicit parts.
We apply the following proposition demonstrated in \cite[Proposition 2.4]{veiga2023improving} to write the IMEX ADER algorithms.
\begin{prop}[ADER right-hand side]\label{prop:RHS_ADER}
	Given the definition of $\underline{\underline{M}}$ in \eqref{eq:MassmatrixAder} and defining with $\vec{1}=[1,\dots,1]^T \in \R^{M+1}$, we have that 
	\begin{equation}
		\M^{-1}\vec{\phi}(t_n) = \vec{1}.
	\end{equation}
\end{prop}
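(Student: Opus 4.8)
The plan is to reduce the claim to the equivalent statement $\M\vec{1} = \vec{\phi}(t_n)$, since $\M$ is invertible, and then verify this identity row by row. Fixing $m\in\{0,\dots,M\}$, I would compute the $m$-th component of $\M\vec{1}$ directly from the definition \eqref{eq:MassmatrixAder}:
\begin{equation*}
(\M\vec{1})_m = \sum_{l=0}^M \M_{m,l} = \phi_m(t_{n+1})\sum_{l=0}^M\phi_l(t_{n+1}) - \sum_{q=0}^Q \partial_t\phi_m(x_q)\,w_q\sum_{l=0}^M\phi_l(x_q).
\end{equation*}
The first step is then to invoke the partition-of-unity property of the Lagrange basis, $\sum_{l=0}^M\phi_l(s)\equiv 1$, which collapses both inner sums to $1$ and leaves $(\M\vec{1})_m = \phi_m(t_{n+1}) - \sum_{q=0}^Q \partial_t\phi_m(x_q)\,w_q$.

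The heart of the argument is to recognize the remaining sum as an exact quadrature. Since $\phi_m$ is a polynomial of degree at most $M$, its derivative $\partial_t\phi_m$ has degree at most $M-1$; the quadrature rules used (equispaced with sufficiently many points, Gauss--Lobatto, Gauss--Legendre) all integrate such polynomials exactly, so
\begin{equation*}
\sum_{q=0}^Q \partial_t\phi_m(x_q)\,w_q = \int_{t_n}^{t_{n+1}} \partial_t\phi_m(t)\,\dd t = \phi_m(t_{n+1}) - \phi_m(t_n)
\end{equation*}
by the fundamental theorem of calculus. Substituting back gives $(\M\vec{1})_m = \phi_m(t_{n+1}) - \big(\phi_m(t_{n+1}) - \phi_m(t_n)\big) = \phi_m(t_n)$, which is exactly the $m$-th entry of $\vec{\phi}(t_n)$. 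Since $m$ was arbitrary, $\M\vec{1}=\vec{\phi}(t_n)$, and multiplying by $\M^{-1}$ yields the claim.

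I expect the only genuine obstacle to be the quadrature-exactness step: one must state clearly the standing assumption on the quadrature (exactness at least for degree $M-1$), which is precisely what the various node/quadrature choices listed in the ADER construction guarantee. The partition-of-unity and fundamental-theorem steps are routine, and invertibility of $\M$ is taken as given from the surrounding construction. A short remark could note that this assumption is the same one ensuring the ADER scheme attains its stated order of accuracy, so it is automatically in force.
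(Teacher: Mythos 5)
Your proof is correct and is essentially the argument the paper relies on: the paper itself defers to \cite[Proposition 2.4]{veiga2023improving} rather than proving the identity, but the identical row-sum computation (partition of unity collapsing $\sum_l\phi_l\equiv 1$, followed by exact quadrature of $\partial_t\phi_m$ and the fundamental theorem of calculus to get $\sum_l M_{ml}=\phi_m(t_n)$) appears verbatim inside the paper's proof of Proposition~\ref{prop:zero_det}. Your explicit flagging of the quadrature-exactness hypothesis is a sound and welcome addition, and it holds for all node/quadrature choices used in the ADER construction.
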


If we take this under consideration in the whole ADER process, it leads to the 
\begin{equation}
\begin{aligned}
&\text{\textbf{IMEX ADER Algorithm}}\\
&\bbc^{(0)}:=[\bc_n,\dots, \bc_n ]^T,\\
&\bbc^{(k)}=\vec{1}\bc_n -\Delta t \M^{-1}\mat{R} S(\vec{\bc}^{(k)}) - \Delta t \M^{-1}\mat{R} G(\vec{\bc}^{(k-1)}), \quad k=1,\dots,K,\\
&\bc_{n+1}=\bc_n+\Delta t \sum_{m=0}^M\int_{t_n}^{t_{n+1}} \left(S(\bc^{(K),m})+G(\bc^{(K),m})\right)dt=\bc_n+\Delta t \underline{b}^T\left(S(\bbc^{(K)})+G(\bbc^{(K)})\right). 
\end{aligned}
\end{equation}
We remark that the iteration process contains a (nonlinear) system of equations in $\bbc^{(k)}$ of dimension $(M+1)\times I$ for every $(k)$. If the stiff term is linear, the system becomes linear, otherwise, it is possible to linearize the stiff term in the definition of $\L^1$ \cite{Han_Veiga_2021}.
 
\subsection{IMEX ADER as RK}
In order to put the ADER method into a RK form, in \eqref{eq:fixpoint_iteration} we have to multiply the right-hand side by the inverse of the mass matrix. This will show the explicit dependence on $\bc_n$ for every iteration and subtimestep. 
Proposition~\ref{prop:RHS_ADER} allows us to rewrite the explicit iteration process \eqref{eq:fixpoint_iteration} as
\begin{equation}\label{eq: ADER_RK_step}
\begin{aligned}
\vec{\bc}^{(k)} = \M^{-1}\vec{r}(\vec{\bc}^{(k-1)})& =\M^{-1}\vec{\phi}(t_n) \bc_n + \Delta t \M^{-1}\mat{R} F(\vec{\bc}^{(k-1)})
=\vec{1} \bc_n + \Delta t \M^{-1}\mat{R} F(\vec{\bc}^{(k-1)}).
\end{aligned}
\end{equation}

Let us define the matrix $\mat{Q}:=\M^{-1} \mat{R}$ and the vector $\vec{P}$ such that $P_{m=0}^M=\sum_l Q_{ml}$. This last equation is directly used in the first iteration of the ADER process where all coefficients are initialized as $\bc_n$:
\begin{align*}
\vec{\bc}^{(1)}=\vec{1}\bc_n+\Delta t \mat{Q} F(\vec{\bc}^{(0)})
=\vec{1}\bc_n+\Delta t \mat{Q} F(\vec{1}\bc_n) 
=\vec{1}\bc_n+\Delta t \vec{P} F(\bc_n),
\end{align*}
representing the non-zero entries in the first column of the Butcher matrix $\mat{A}$. The further $(K-1)$ iterations just use the previous steps as in \eqref{eq: ADER_RK_step}, which give the entries of $\mat{Q}$. To achieve order $p$, we choose $p=K=Y$, i.e., $M=p-1$ for equispaced, $M=\lceil \frac{p}{2} \rceil$ for Gauss--Lobatto and $M = \lfloor \frac{p}{2} \rfloor$ for Gauss--Legendre, 
for a total amount of $Z=K\times (M+1)+1$ stages, which for example for equispaced nodes is equal to $Z=p^2+1$ \cite{veiga2023improving} (some stages can be avoided when the row is identically 0). We can write the ADER, ImADER and IMEX ADER method of order $p$ as RK methods in a blockdiagonal matrix structure. Then, the Butcher tableaux of the IMEX ADER, which is composed of the explicit and implicit ones, is given by
\begin{equation}\label{eq:ImExADER_RK_ButcherTableu}
\begin{array}{c|ccccccc}
0 & 0 &   & &  & & &   \\
\vec{P} & \vec{0} &\mat{Q}  &   & & &    \\
\vec{P}  & \vec{0} &\mat{0} &   \mat{Q} & & & &  \\
\vdots & \vec{0} & \mat{0}  &\mat{0} &\mat{Q}   && & \\
\vdots & \vec{0} &  \mat{0}   &   \mat{0}  &\mat{0} & \mat{Q}  &  &   \\
\vdots &  \vdots  &  \vdots &  \vdots &  \ddots  &  \ddots&\ddots &  \\
\vec{P} & \vec{0} &  \mat{0}  &  \hdots &  \hdots & \mat{0}  & \mat{0} & \mat{Q}  \\
\hline
&  \vec{0}^T  & \vec{0}^T    & \hdots  &   &   \hdots &    \vec{0}^T& \vec{b}^{T} 
\end{array},\qquad
\begin{array}{c|ccccccc}
0 & 0 &   & &  & & &   \\
\vec{P} & \vec{P} &  &   & & & &   \\
\vec{P}  & \vec{0} &   \mat{Q} & & & & & \\
\vdots & \vec{0} & \mat{0}  &\mat{Q}   && & & \\
\vdots & \vec{0} &  \mat{0}   &   \mat{0}  & \mat{Q}  &  &  & \\
\vdots &  \vdots  &  \vdots &  \vdots &  \ddots  &  \ddots& &  \\
\vec{P} & \vec{0} &  \mat{0}  &  \hdots &  \hdots & \mat{0}  &  \mat{Q} & \\
\hline
&  \vec{0}^T  & \vec{0}^T    & \hdots  &   &   \hdots &    \vec{0}^T& \vec{b}^{T} 
\end{array}.
\end{equation}
Note that the implicit matrix $\mat{A}$ is particularly sparse and that it is block diagonal. Nevertheless, the method is not diagonally implicit, and also the final update does not have the same coefficients of the last stage.

\subsection{A-Stability of ImADER methods}
\label{subsec:ADER_A-stability}
First of all, let us notice that for all ImADER, the only determining iteration is the last one as $\bbc^{(k)} = \vec{1}\bc_n -\Delta t \mat{Q} S(\bbc^{(k)})$ for all $k$ does not depend on previous iterations in the purely implicit case.
Hence, the stability function reduces for all ImADER to the $\L^2=0$ methods denoted as ADER-IWF-RK in \cite{veiga2023improving}. It is given by the following Butcher tableau
\begin{equation}
	\begin{array}{c|c}
		\vec{P} & \mat{Q} \\ \hline 
		&b^T 
	\end{array}.
\end{equation}

For the Gauss--Lobatto nodes, it has been proven in \cite[Theorem A.3]{veiga2023improving} that the ADER-IWF-RK method coincide with the Lobatto IIIC method, hence, its stability function is the Pad\'e$(M-1,M+1)$ approximation, with $M+1$ the number of stages, and for \cite[Theorem 4.12]{wanner1996solving} it is A-stable.
This means that all ImADER with Gauss--Lobatto nodes are A-stable.

For Gauss--Legendre nodes, we need some theorems that can be found in \cite{wanner1996solving} to prove the same results. 
First a classical result of \cite{stetter1973analysis,scherer1979necessary} on the stability function of a RK method.
\begin{theorem}[{\cite[Proposition 3.2]{wanner1996solving}}]
	The stability function of a RK scheme satisfies
	\begin{equation}\label{eq:stability_function_form}
	R(z) = \frac{\det(\mat{Id}-z\mat{A}+z\vec{1} b^T)}{\det(\mat{Id}-z\mat{A})} = \frac{N(z)}{Q(z)}.
	\end{equation}
\end{theorem}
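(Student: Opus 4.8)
The plan is to apply the Runge--Kutta scheme to the scalar linear test problem $\bc' = \lambda\bc$ and to recognise the resulting one-step map as a quotient of two determinants via the matrix determinant lemma. Writing $z := \lambda\Delta t$ and stacking the $Z$ stage values $\bu^{(1)},\dots,\bu^{(Z)}$ into a vector $\bbu$, the stage equations $\bu^{(s)} = \bc^n + z\sum_i A_i^s\bu^{(i)}$ read $\bbu = \bc^n\vec{1} + z\mat{A}\bbu$, i.e. $(\mat{Id}-z\mat{A})\bbu = \bc^n\vec{1}$. For every $z$ such that $\mat{Id}-z\mat{A}$ is invertible — that is, for all but the finitely many $z$ of the form $1/\mu$ with $\mu$ a nonzero eigenvalue of $\mat{A}$ — this gives $\bbu = \bc^n(\mat{Id}-z\mat{A})^{-1}\vec{1}$, and the update $\bc^{n+1} = \bc^n + z\,b^T\bbu =: R(z)\bc^n$ yields
\begin{equation*}
R(z) = 1 + z\,b^T(\mat{Id}-z\mat{A})^{-1}\vec{1}.
\end{equation*}

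Next I would turn this into the claimed ratio of determinants. The quickest route is the matrix determinant lemma: for an invertible matrix $\mat{B}$ and vectors $u,v$ one has $\det(\mat{B}+uv^T) = \det(\mat{B})\,(1+v^T\mat{B}^{-1}u)$. Applied with $\mat{B} = \mat{Id}-z\mat{A}$, $u = z\vec{1}$ and $v = b$, this gives
\begin{equation*}
\det(\mat{Id}-z\mat{A}+z\vec{1}b^T) = \det(\mat{Id}-z\mat{A})\bigl(1+z\,b^T(\mat{Id}-z\mat{A})^{-1}\vec{1}\bigr) = \det(\mat{Id}-z\mat{A})\,R(z),
\end{equation*}
which is exactly \eqref{eq:stability_function_form} after dividing by $Q(z):=\det(\mat{Id}-z\mat{A})$ and setting $N(z):=\det(\mat{Id}-z\mat{A}+z\vec{1}b^T)$. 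Equivalently, and perhaps more transparently, one evaluates the determinant of the bordered matrix $\begin{pmatrix}\mat{Id}-z\mat{A} & -z\vec{1}\\ b^T & 1\end{pmatrix}$ through its two Schur complements: eliminating the $(1,1)$ block produces $\det(\mat{Id}-z\mat{A})\cdot\bigl(1+z\,b^T(\mat{Id}-z\mat{A})^{-1}\vec{1}\bigr) = Q(z)R(z)$, while eliminating the $(2,2)$ block (the scalar $1$) produces $\det(\mat{Id}-z\mat{A}+z\vec{1}b^T) = N(z)$; equating the two expressions yields $Q(z)R(z) = N(z)$.

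Finally I would observe that $N(z)$ and $Q(z)$ are polynomials in $z$ of degree at most $Z$, being determinants of matrices whose entries depend affinely on $z$; hence the identity $Q(z)R(z) = N(z)$, derived above on the set where $Q(z)\neq 0$, extends to an identity of polynomials, so that $R = N/Q$ as rational functions. There is no genuine obstacle in this argument: the only points demanding a little care are the correct bookkeeping in the matrix determinant lemma (equivalently, in the Schur-complement computation) and the remark that the relation, first obtained only for those $z$ for which $\mat{Id}-z\mat{A}$ is invertible, persists as a rational-function identity by polynomial continuation.
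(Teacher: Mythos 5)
Your argument is correct and complete: deriving $R(z)=1+z\,b^T(\mat{Id}-z\mat{A})^{-1}\vec{1}$ from the Dahlquist test equation and converting it to the ratio of determinants via the matrix determinant lemma (equivalently the two Schur complements of the bordered matrix), with polynomial continuation to cover the $z$ where $\mat{Id}-z\mat{A}$ is singular, is exactly the classical proof. The paper itself gives no proof, citing Proposition 3.2 of Hairer--Wanner, and your reasoning matches that standard reference's approach.
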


Then, we introduce the Pad\'e approximations and the following result.
\begin{theorem}[{\cite[Theorem 3.11]{wanner1996solving}}]\label{th:pade_order}
	The $(k,j)$-Pad\'e approximation to $e^z$ is given by
	\begin{equation}
	R_{kj}(z) = \frac{N_{kj}(z)}{Q_{kj}(z)}
	\end{equation}
	is the unique rational approximation to $e^z$ of order $j+k$, such that the degrees of the numerator and denominator are $k$ and $j$, respectively. 
\end{theorem}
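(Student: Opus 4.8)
Since this is a classical result --- as the reference to \cite{wanner1996solving} signals --- the paper will in all likelihood simply invoke it; for completeness, here is the route one would take. The statement splits into \emph{existence} of a rational function $R_{kj}=N_{kj}/Q_{kj}$ with $\deg N_{kj}\le k$, $\deg Q_{kj}\le j$ and $R_{kj}(z)-e^{z}=\Ol(z^{k+j+1})$, and \emph{uniqueness} of any rational approximant meeting the same degree bounds with order at least $k+j$. Throughout I would fix the normalization $Q_{kj}(0)=1$, which is forced anyway since $R(0)=1$ must be finite.

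I would dispose of uniqueness first, as it is the soft part. Suppose $P/Q$ and $\tilde P/\tilde Q$ both obey the degree bounds, are normalized by $Q(0)=\tilde Q(0)=1$, and have order $\ge k+j$. Then, as analytic germs at the origin, $R-\tilde R=(R-e^{z})-(\tilde R-e^{z})=\Ol(z^{k+j+1})$, so the polynomial $P\tilde Q-\tilde P Q=Q\tilde Q\,(R-\tilde R)$ vanishes to order $k+j+1$ at $z=0$. But its degree is at most $k+j$, so it is identically zero, whence $R=\tilde R$.

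For existence I would write $Q(z)=\sum_{i=0}^{j}q_{i}z^{i}$ with $q_{0}=1$ and require the coefficients of $z^{k+1},\dots,z^{k+j}$ in the power series $e^{z}Q(z)$ to vanish; once they do, setting $N$ equal to the degree-$k$ truncation of $e^{z}Q(z)$ makes $e^{z}Q(z)-N(z)=\Ol(z^{k+j+1})$ automatic, and dividing by $Q(z)$ gives the order statement (including order at least $k+j$). This is a square $j\times j$ linear system for $q_{1},\dots,q_{j}$, and one closes the argument in one of two equivalent ways: either show that the coefficient matrix --- whose entries are built from factorials in a Hankel pattern --- is nonsingular, or exhibit the classical closed forms
\[
	Q_{kj}(z)=\sum_{i=0}^{j}\binom{j}{i}\frac{(k+j-i)!}{(k+j)!}(-z)^{i},\qquad
	N_{kj}(z)=\sum_{i=0}^{k}\binom{k}{i}\frac{(k+j-i)!}{(k+j)!}z^{i},
\]
and verify the order condition directly from the integral remainder identity $e^{z}Q_{kj}(z)-N_{kj}(z)=\tfrac{(-1)^{j}}{(k+j)!}\,z^{k+j+1}\int_{0}^{1}e^{zt}t^{j}(1-t)^{k}\,\dd t$. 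Finally, the order is \emph{exactly} $k+j$: the coefficient of $z^{k+j+1}$ in this remainder equals $\pm\tfrac{1}{(k+j)!}\int_{0}^{1}t^{j}(1-t)^{k}\,\dd t=\pm\tfrac{j!\,k!}{(k+j+1)!}\ne 0$, so no rational approximant with these degree bounds can do better.

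The only step with real content is the existence/explicit-formula part --- namely establishing the integral remainder identity (a lengthy but routine repeated integration by parts, in which the boundary contributions at $t=0$ and $t=1$ assemble into $N_{kj}$ and $e^{z}Q_{kj}$ respectively), or equivalently proving that the $j\times j$ factorial coefficient matrix is nonsingular. Everything else is formal power-series bookkeeping. Once Theorem~\ref{th:pade_order} is available, it combines with the stability-function formula \eqref{eq:stability_function_form} to identify the stability function of the Gauss--Legendre ImADER scheme with a Padé approximant of $e^{z}$, and the A-stability characterization of such approximants then yields the desired conclusion.
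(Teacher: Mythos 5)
The paper offers no proof of this statement: it is quoted verbatim from Hairer--Wanner \cite{wanner1996solving} and used as a black box in the proof of Theorem~\ref{th:pade_GLG}, so there is no internal argument to compare against, and your anticipation that it would simply be invoked is exactly right. Your sketch is the standard classical proof and is essentially correct: uniqueness follows from the degree-versus-vanishing-order count applied to $P\tilde Q-\tilde P Q$ (with the harmless normalization $Q(0)=1$), and existence follows either from solving the square linear system for the denominator coefficients or from the explicit formulas for $N_{kj},Q_{kj}$ together with the Hermite integral remainder, which also shows the order is exactly $k+j$. One small slip that does not affect validity: the coefficient of $z^{k+j+1}$ in the remainder is $(-1)^j\,\frac{1}{(k+j)!}\int_0^1 t^j(1-t)^k\,\dd t=(-1)^j\,\frac{j!\,k!}{(k+j)!\,(k+j+1)!}$, i.e.\ you dropped the factor $\frac{1}{(k+j)!}$ in your final equality; since all that matters is that this coefficient is nonzero, the conclusion stands. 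A second minor point worth being careful about in a full write-up is the case $Q(0)=0$ in the uniqueness step, which is excluded by cancelling common powers of $z$ before normalizing.
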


Finally, we need the A-stability result for the Pad\'e approximations.
\begin{theorem}[{\cite[Theorem 4.12]{wanner1996solving}}]\label{th:pade_A_stab}
	A $(k,j)$-Pad\'e approximation $R_{kj}(z)$ to $e^z$ is A-stable if and only if $k\leq j \leq k+2$. All zeros and all poles are simple.
\end{theorem}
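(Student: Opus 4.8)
\emph{Proof plan.} I would follow the classical route of Birkhoff--Varga and Ehle, reducing $A$-stability of $R_{kj}=N_{kj}/Q_{kj}$ to two conditions that are then settled by an explicit computation with the Padé error (throughout, $N_{kj}$ and $Q_{kj}$ are coprime, since $e^{z}$ has a normal Padé table). The first step is to show that $R_{kj}$ is $A$-stable if and only if (a) $Q_{kj}$ has no zero in the closed left half-plane $\{\Re z\le 0\}$, and (b) the \emph{$E$-polynomial}
\[
E_{kj}(y):=\bigl|Q_{kj}(iy)\bigr|^{2}-\bigl|N_{kj}(iy)\bigr|^{2}
\]
is nonnegative for all $y\in\R$; here $E_{kj}$ is a genuine polynomial in $y^{2}$ because $N_{kj},Q_{kj}$ have real coefficients. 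One implication is the maximum modulus principle applied to $R_{kj}$ on $\{\Re z\le 0\}\cup\{\infty\}$, which requires $R_{kj}$ to be bounded there and hence $k=\deg N_{kj}\le\deg Q_{kj}=j$; conversely, if $k>j$ then $|R_{kj}(x)|\to\infty$ as $x\to-\infty$, so $k\le j$ is already necessary for $A$-stability, and condition (b) merely rephrases $|R_{kj}(iy)|\le 1$.

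The heart of the argument is an explicit formula for $E_{kj}$. I would use the Hermite integral representation of the Padé remainder,
\[
\rho_{kj}(z):=e^{z}Q_{kj}(z)-N_{kj}(z)=\frac{(-1)^{j}}{(k+j)!}\,z^{\,k+j+1}\int_{0}^{1}e^{zt}\,t^{j}(1-t)^{k}\,dt,
\]
together with the reflection identity $Q_{kj}(z)=N_{jk}(-z)$ (equivalently $\rho_{kj}(z)=-e^{z}\rho_{jk}(-z)$, which comes from the substitution $t\mapsto 1-t$). Writing $N_{kj}=e^{z}Q_{kj}-\rho_{kj}$ on the imaginary axis and using $\overline{Q_{kj}(iy)}=Q_{kj}(-iy)=N_{jk}(iy)$ rewrites $E_{kj}(y)$ first as $|N_{jk}(iy)|^{2}-|N_{kj}(iy)|^{2}$ and then as an expression built solely from $\rho_{kj}(\pm iy)$ and $\rho_{jk}(\pm iy)$. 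Evaluating these through the integral formula --- where the weight $t^{j}(1-t)^{k}$ keeps a constant sign on $[0,1]$ --- I would show that every intermediate power of $y$ cancels exactly when $j-k\in\{0,1,2\}$: one gets $E_{kj}\equiv 0$ for $j=k$, and $E_{kj}(y)=c_{kj}\,y^{2j}$ with $c_{kj}>0$ (in fact the squared leading coefficient of $Q_{kj}$) for $j\in\{k+1,k+2\}$, whereas for $j\ge k+3$ the lowest-order surviving term has degree $2(k+1)<2j$ and a strictly negative coefficient.

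The theorem then follows. If $k\le j\le k+2$, the formula above gives $E_{kj}\ge 0$ on $\R$, i.e. (b); and for (a), a zero $iy_{0}$ of $Q_{kj}$ on the imaginary axis would force $N_{kj}(iy_{0})\neq 0$ by coprimality, hence $E_{kj}(y_{0})=-|N_{kj}(iy_{0})|^{2}<0$, contradicting (b), while the absence of zeros of $Q_{kj}$ in the open left half-plane follows from a standard argument-principle count for the denominators of Padé approximants to $e^{z}$ (equivalently, from the order-star method of Wanner, Hairer and N\o rsett). Thus $R_{kj}$ is $A$-stable. Conversely, $j<k$ was already ruled out, and for $j\ge k+3$ the formula yields $E_{kj}(y)<0$, i.e. $|R_{kj}(iy)|>1$, for small $y\neq 0$, so $R_{kj}$ is not $A$-stable. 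Finally, all zeros and poles of $R_{kj}$ are simple: by coprimality it suffices that neither $N_{kj}$ nor $Q_{kj}$ has a repeated root, and since $Q_{kj}$ solves the confluent equation $z\,Q_{kj}''+(z-k-j)\,Q_{kj}'-j\,Q_{kj}=0$ (and $N_{kj}$ an analogous one), a double root $z_{0}\neq 0$ would be a common root of the polynomial and its derivative, forcing in turn all higher derivatives to vanish at $z_{0}$ and hence the polynomial to be identically zero, which contradicts $Q_{kj}(0)=N_{kj}(0)=1$.

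The main obstacle is the middle step: collapsing $E_{kj}$ to a single monomial with positive leading coefficient when $j-k\in\{1,2\}$, and identifying the sign of its lowest nonzero term when $j-k\ge 3$. This is precisely where the special structure of $e^{z}$ --- the Beta-type weight in the Hermite remainder, or equivalently the three-term recurrences in the Padé table --- is indispensable, with no analogue for a generic order-$p$ rational approximation; a secondary difficulty is establishing the Hurwitz property of $Q_{kj}$ in the final step without circular reasoning, for which the order-star argument is the most economical route.
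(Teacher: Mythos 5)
The paper does not actually prove this statement: it is quoted verbatim from Hairer--Wanner, where it is established by the order-star theory of Wanner, Hairer and N{\o}rsett (that is also where the simplicity of zeros and poles comes from). So your proposal has to stand on its own. The sufficiency half of your plan is essentially sound: since $R_{kj}(z)-e^{z}=O(z^{k+j+1})$, the even polynomial $E_{kj}$ vanishes at $y=0$ to order $2\lceil (k+j+1)/2\rceil$, which for $j=k$ forces $E_{kk}\equiv 0$ and for $j\in\{k+1,k+2\}$ forces $E_{kj}(y)=q_{j}^{2}y^{2j}\ge 0$ ($q_j$ the leading coefficient of $Q_{kj}$); combined with the (nontrivial, but known and deferred by you to order stars or Saff--Varga) fact that $Q_{kj}$ has no zeros in $\overline{\mathbb{C}^{-}}$, this gives A-stability. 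The ODE trick for simplicity of zeros and poles is also fine for nonzero roots.

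The genuine gap is in the necessity direction. Your central claim --- that for $j\ge k+3$ the lowest surviving term of $E_{kj}$ has degree $2(k+1)$ and a strictly negative coefficient, so that $|R_{kj}(iy)|>1$ for small $y$ --- is false. By the same order argument as above, $E_{kj}$ vanishes to order $2\lceil(k+j+1)/2\rceil\ge 2k+4$, and the sign of its lowest coefficient is $(-1)^{j}\,\mathrm{sign}\bigl(i^{\,2\lceil(k+j+1)/2\rceil}\bigr)$ up to a positive factor $2\,k!\,j!/\bigl((k+j)!\,(k+j+1)!\bigr)$, i.e.\ it depends on the parities of $j$ and $k+j$ and is positive for infinitely many pairs with $j\ge k+3$. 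Concretely, for $(k,j)=(0,5)$ one computes
\begin{equation*}
E_{05}(y)=\frac{y^{6}}{360}-\frac{y^{8}}{960}+\frac{y^{10}}{14400}
=\frac{y^{6}}{14400}\bigl(y^{4}-15y^{2}+40\bigr),
\end{equation*}
which is strictly positive for small $y\neq 0$ and becomes negative only for $y^{2}$ between $(15\pm\sqrt{65})/2$. So the failure of A-stability for $j\ge k+3$ is not visible in a local expansion of $E_{kj}$ at the origin; it is a global phenomenon, and no sign pattern of the Hermite (Beta-weight) remainder at $y\to 0$ can deliver it. This is exactly why Ehle's conjecture remained open until the global order-star argument, which is the route the cited book takes. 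In your plan the order-star machinery is invoked only for the auxiliary Hurwitz property of $Q_{kj}$, so the ``only if'' half of the theorem is left without a working proof; to repair it you would have to run the full order-star (or an equivalent global) argument, at which point the E-polynomial computation becomes redundant for that direction.
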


With these theorems, we can proceed studying the A-stability of the implicit ADER methods for Gauss--Legendre nodes.
We  prove that the stability function of the ADER-IWF-RK with $M+1$ Gauss--Legendre nodes is the Pad\'e$(M,M+1)$.
Prior to proceeding, we require an additional outcome concerning the matrices present in the numerator of the stability function.
\begin{proposition}[Zero determinant of $\mat{A}-\vec{1}b^T$]\label{prop:zero_det}
	For the ADER-IWF-RK, we have $\det(\mat{A}-\vec{1}b^T)=0$.
\end{proposition}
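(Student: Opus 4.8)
The plan is to exploit the explicit block structure of the ADER-IWF-RK Butcher tableau, namely $\mat{A}=\mat{Q}=\M^{-1}\mat{R}$ and $b^T$ with $b_i=\int_{t^n}^{t^{n+1}}\phi_i(t)\dd t$, together with Proposition~\ref{prop:RHS_ADER} ($\M^{-1}\vec{\phi}(t_n)=\vec{1}$). First I would write $\mat{A}-\vec{1}b^T=\M^{-1}\bigl(\mat{R}-\vec{\phi}(t_n)b^T\bigr)$, using the Proposition to replace $\vec{1}$ by $\M^{-1}\vec{\phi}(t_n)$; since $\det\M\neq 0$, it suffices to show $\det\bigl(\mat{R}-\vec{\phi}(t_n)b^T\bigr)=0$, i.e.\ that the matrix $\mat{R}-\vec{\phi}(t_n)b^T$ is singular. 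The natural way to see this is to exhibit a nonzero vector in its kernel (or cokernel). I expect the cokernel to be the cleaner side: one wants a row-combination of $\mat{R}$ that reproduces $b^T$ (up to the scalar factors coming from $\vec{\phi}(t_n)$).

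The key observation is that $\mat{R}$ and $b$ both come from integrating Lagrange basis functions, so they are tied by the exactness of the quadrature $\{(w_q,x_q)\}$. Concretely $R_{ml}=\sum_q w_q\phi_m(x_q)\phi_l(x_q)$ and $b_l=\int_{t^n}^{t^{n+1}}\phi_l(t)\dd t=\sum_q w_q\phi_l(x_q)$ (this last equality holds because the quadrature integrates the degree-$\le M$ polynomials $\phi_l$ exactly for the node/quadrature pairings considered here — equispaced, Gauss--Lobatto, Gauss--Legendre). Using $\sum_m\phi_m\equiv 1$, summing $R_{ml}$ over $m$ gives $\sum_m R_{ml}=\sum_q w_q\phi_l(x_q)=b_l$. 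Hence $\vec{1}^T\mat{R}=b^T$. Combined with $\vec{1}^T\vec{\phi}(t_n)=\sum_m\phi_m(t_n)=1$, we get
\begin{equation*}
\vec{1}^T\bigl(\mat{R}-\vec{\phi}(t_n)b^T\bigr)=b^T-(\vec{1}^T\vec{\phi}(t_n))\,b^T=b^T-b^T=\vec{0}^T,
\end{equation*}
so $\vec{1}$ is a left null vector of $\mat{R}-\vec{\phi}(t_n)b^T$; therefore the matrix is singular, $\det\bigl(\mat{R}-\vec{\phi}(t_n)b^T\bigr)=0$, and multiplying by $\M^{-1}$ on the left (whose determinant is nonzero) yields $\det(\mat{A}-\vec{1}b^T)=0$. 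Equivalently, one can phrase the whole argument directly on $\mat{A}-\vec{1}b^T$ by noting that its left null vector is $\vec{\phi}(t_n)^T$, since $\vec{\phi}(t_n)^T\mat{A}=\vec{\phi}(t_n)^T\M^{-1}\mat{R}=\vec{1}^T\mat{R}=b^T$ and $\vec{\phi}(t_n)^T\vec{1}=1$.

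The main obstacle is justifying the identity $b_l=\sum_q w_q\phi_l(x_q)$, i.e.\ that the chosen quadrature integrates the basis polynomials exactly — this is where the restriction to the standard node/quadrature families enters, and it is exactly the hypothesis already used in the surrounding order-of-accuracy statements in Section~\ref{sec:ader}, so it is available. A secondary, purely bookkeeping point is making sure the index ranges in \eqref{eq:MassmatrixAder}--\eqref{eq:aderrhsMatrix} line up ($m=0,\dots,M$ for the square blocks $\mat{Q}$, $\mat{R}$, $\M$), so that $\vec{1}$ and $\vec{\phi}(t_n)$ have the matching dimension $M+1$. Everything else is a one-line determinant manipulation, so I would keep the written proof to the identity $\vec{1}^T\mat{R}=b^T$, the observation $\vec{1}^T\vec{\phi}(t_n)=1$, and the conclusion via Proposition~\ref{prop:RHS_ADER}.
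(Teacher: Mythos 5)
Your proof is correct and follows essentially the same route as the paper: after factoring out $\M^{-1}$, you exhibit $\vec{1}$ as a left null vector of $\mat{R}-\vec{\phi}(t_n)b^T$, which is exactly the paper's matrix $\mat{E}=\mat{R}-\mat{M}\vec{1}b^T$ (the paper computes $\mat{M}\vec{1}=\vec{\phi}(t_n)$ directly instead of invoking Proposition~\ref{prop:RHS_ADER}, and phrases $\vec{1}^T\mat{E}=\vec{0}^T$ as ``the rows sum to zero''). The only ingredients beyond that are the quadrature-exactness identities $\vec{1}^T\mat{R}=b^T$ and $\sum_m\phi_m\equiv 1$, which you justify adequately.
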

\begin{proof}
	Without loss of generality, we consider the interval $[t_n,t_{n+1}]=[0,1]$ for simplicity. 
	To prove the result, let us recall the definition of the matrices. $\mat{A}=\mat{Q} = \M^{-1} \mat{R} $ and
	\begin{align*}
	b_i = w_i = \int_0^1 \phi_i(t)dt, \qquad  M_{ij} = \phi_i(1)\phi_j(1) - \int_0^1 \phi_i'(t)\phi_j(t) dt, \qquad R_{ij} = \int_0^1 \phi_i(t) \phi_j(t) dt = \delta_{ij} w_j.
	\end{align*} 
	Proving that $\det(\mat{A}-\vec{1}b^T)=\det(\mat{M}^{-1}\mat{R}-\vec{1}b^T)=0$ is equivalent to show that $\det(\mat{R}-\mat{M}\vec{1}b^T)=0$. 
	
	First, we study the matrix $\mat{M}\vec{1}b^T$. It can be rewritten as follows:
	\begin{align*}
	(\mat{M}\vec{1}b^T)_{ij}  = \sum_k M_{ik} 1_k b_j = \sum_k \left(\phi_i(1)\phi_k(1) - \int_0^1 \phi_i'(t)\phi_k(t) dt \right) w_j = \left(\phi_i(1) - \int_0^1 \phi_i'(t)dt \right) w_j = \phi_i(0) w_j.
	\end{align*}
	
	Then, we define $\mat{E}:=\mat{R}-\mat{M}\vec{1}b^T$ and we show that the sum of its rows is identically 0, hence, its rows are linearly dependent. It is
	\begin{align}
	\sum_{i} E_{ij} = \sum_{i} \left(R_{ij}-\phi_i(0) w_j\right) =  \sum_{i} \left(\delta_{ij} w_j-\phi_i(0) w_j\right) =  \sum_{i} \left(\delta_{ij}-\phi_i(0) \right) w_j  = (1-1)w_j = 0, \quad \forall j.
	\end{align} 
	This proves the statement.
\end{proof}

\begin{theorem}[Stability function of ADER-IWF-RK Gauss--Legendre]\label{th:pade_GLG}
	The stability function  of ADER-IWF-RK Gauss--Legendre is  the Pad\'e$(M,M+1)$ approximation.
\end{theorem}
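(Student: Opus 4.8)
The plan is to read off the stability function $R$ from the Butcher tableau of ADER-IWF-RK via \eqref{eq:stability_function_form}, bound the degrees of its numerator and denominator using Propositions~\ref{prop:RHS_ADER} and~\ref{prop:zero_det}, and then combine the order of accuracy of the scheme with the uniqueness of Padé approximants (Theorem~\ref{th:pade_order}) to identify $R$ with Padé$(M,M+1)$.

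First I would set $\mat{A}=\mat{Q}=\M^{-1}\mat{R}\in\R^{(M+1)\times(M+1)}$ and $b_i=w_i$ as in the ADER-IWF-RK tableau, and apply \eqref{eq:stability_function_form}: $R(z)=N(z)/Q(z)$ with $Q(z)=\det(\mat{Id}-z\mat{A})$ and $N(z)=\det\big(\mat{Id}-z(\mat{A}-\vec{1}b^{T})\big)$. From these expressions $Q(0)=1$, $\deg Q\le M+1$ and $\deg N\le M+1$. Moreover $\mat{A}$ is invertible (the matrix $\mat{R}=\operatorname{diag}(w_i)$ has strictly positive Gauss--Legendre weights and $\M$ is invertible by Proposition~\ref{prop:RHS_ADER}), so the $z^{M+1}$-coefficient of $Q$ is $(-1)^{M+1}\det\mat{A}\neq0$ and $\deg Q=M+1$, matching the denominator degree of Padé$(M,M+1)$.

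The decisive step is to show $\deg N\le M$. The coefficient of $z^{M+1}$ in $N(z)=\det\big(\mat{Id}-z(\mat{A}-\vec{1}b^{T})\big)$ equals $(-1)^{M+1}\det(\mat{A}-\vec{1}b^{T})$, which vanishes by Proposition~\ref{prop:zero_det}; hence $\deg N\le M$. Thus $R$ is a rational approximation of $e^{z}$ with numerator of degree $\le M$, denominator of degree $\le M+1$ and $Q(0)=1$. Now recall from Section~\ref{subsec:ADER_A-stability} that for every ImADER the stability function coincides with that of the $\L^{2}=0$ scheme ADER-IWF-RK, and that this scheme built on $M+1$ Gauss--Legendre nodes with the associated quadrature has order of accuracy $2M+1$ \cite{veiga2023improving}. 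Hence $R(z)-e^{z}=\Ol(z^{2M+2})$, i.e. $R$ approximates $e^{z}$ to order $2M+1=M+(M+1)$. By the uniqueness asserted in Theorem~\ref{th:pade_order}, a rational function with these degree bounds, this normalization and this order is exactly the Padé$(M,M+1)$ approximation to $e^{z}$, which proves the statement; as a consistency check, Theorem~\ref{th:pade_A_stab} then yields the A-stability of all ImADER with Gauss--Legendre nodes since $M\le M+1\le M+2$.

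I expect the main obstacle to be conceptual rather than computational, since the two determinant identities needed are already supplied by Propositions~\ref{prop:RHS_ADER} and~\ref{prop:zero_det}. The subtle point is the precise role of Proposition~\ref{prop:zero_det}: without the sharpened bound $\deg N\le M$ one only has $\deg N\le M+1$, and both Padé$(M,M+1)$ and the diagonal Padé$(M+1,M+1)$ (the stability function of Gauss collocation) approximate $e^{z}$ to order at least $2M+1$, so the order statement alone would not single out $R$; it is exactly the vanishing determinant that lowers the numerator degree by one and selects the subdiagonal Padé. The only genuinely external ingredient is the order of accuracy $Y=2M+1$ of Gauss--Legendre ADER, and proving this from scratch, rather than quoting \cite{veiga2023improving}, would be the most laborious alternative, as it amounts to checking that the $2M+2$ lowest Taylor coefficients of $N(z)-e^{z}Q(z)$ vanish.
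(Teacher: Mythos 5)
Your proposal is correct and follows essentially the same route as the paper: cite the order $2M+1$ from \cite{veiga2023improving}, use Proposition~\ref{prop:zero_det} to drop the numerator degree to $M$, and conclude by the uniqueness of the Padé approximant (Theorem~\ref{th:pade_order}). The only (harmless) difference is mechanical: you obtain $\deg N\le M$ by noting that the $z^{M+1}$-coefficient of $\det\left(\mat{Id}-z(\mat{A}-\vec{1}b^{T})\right)$ is $(-1)^{M+1}\det(\mat{A}-\vec{1}b^{T})=0$, whereas the paper conjugates $\mat{A}-\vec{1}b^{T}$ by a unitary matrix to expose a zero row before taking the determinant.
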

\begin{proof}
	In \cite[Theorem 3.9]{veiga2023improving}, it has been shown that ADER-IWF-RK Gauss--Legendre is of order $2M+1$.
	Now, since the determinant of $\mat{A}-\mathbbm{1}b^T$ is zero, see Proposition~\ref{prop:zero_det}, there exists a unitary matrix $\mat{W} \in \R^{(M+1)\times (M+1)}$ such that $\mat{W}\mat{W}^T=\mat{Id}$ and
	\begin{equation}
	\mat{W} (\mat{A} -\vec{1}b^T )\mat{W}^T = \begin{pmatrix}
	0 & 0 & \dots &0\\
	* & * &  \dots &*\\
	\vdots & * &  \dots &*\\
	* & * &  \dots &*
	\end{pmatrix}. 
	\end{equation}
	Therefore, the numerator of \eqref{eq:stability_function_form} can be expressed as follows:
	\begin{align}
	\det\left(\mat{Id}-z\mat{A}+z\vec{1}b^T \right) = \det\left(\mat{Id}+z \mat{W} \left( \mat{A}-\vec{1}b^T\right)\mat{W}^T \right)= \det\left(\begin{pmatrix}
	1 & \underline{0}^T \\
	\underline{*} & \mat{Id}-z \mat{G} 
	\end{pmatrix}\right) = \det\left(\mat{Id}-z \mat{G} \right)
	\end{align}
	with $\mat{G}\in \R^{M\times M}$. Te degree of $N(z)$ is smaller or equal to $M$. Since the order of the ADER-IWF-RK Gauss--Legendre is $2M+1$, it must be that the degree of $N(z)$ is $M$ and the degree of $Q(z)$ is $M+1$.
	Theorem~\ref{th:pade_order} establishes that the unique approximation of $e^z$ with an order of $2M+1$, a numerator of degree $M$, and a denominator of degree $M+1$ is the Padé$(M,M+1)$, implying it aligns with the ADER-IWF-RK Gauss--Legendre method.
\end{proof}

\begin{corollary}[A-stability of ImADER Gauss--Legendre]
	ADER-IWF-RK Gauss--Legendre and all ImADER Gauss--Legendre are A-stable.
\end{corollary}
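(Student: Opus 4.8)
The plan is to combine the two ingredients that have just been assembled, so the argument is essentially immediate. First I would invoke Theorem~\ref{th:pade_GLG}, which identifies the stability function $R(z)$ of the ADER-IWF-RK scheme with $M+1$ Gauss--Legendre nodes as the Pad\'e$(M,M+1)$ approximation to $e^z$. It then remains only to apply the Pad\'e A-stability classification, Theorem~\ref{th:pade_A_stab}, with $k=M$ and $j=M+1$: the criterion $k\leq j\leq k+2$ reads $M\leq M+1\leq M+2$, which holds for every $M\geq 0$ (and the zeros and poles are simple). Hence $R(z)$ is A-stable, and therefore so is ADER-IWF-RK Gauss--Legendre.

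For the general ImADER Gauss--Legendre statement I would recall the observation made at the beginning of Section~\ref{subsec:ADER_A-stability}: in the purely implicit case the iteration $\bbc^{(k)} = \vec{1}\bc_n - \Delta t\,\mat{Q}\,S(\bbc^{(k)})$ does not involve $\bbc^{(k-1)}$, so carrying out $K>1$ corrections yields the same iterate as $K=1$. Consequently the one-step map of any ImADER Gauss--Legendre method — and hence its stability function — coincides with that of the corresponding $\L^2=0$ scheme, i.e. ADER-IWF-RK on the same nodes. Since the latter is A-stable by the previous paragraph, every ImADER Gauss--Legendre method inherits A-stability, which proves the corollary.

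I do not expect a real obstacle here: all the substantive work has already been done, namely the rank drop $\det(\mat{A}-\vec{1}b^T)=0$ of Proposition~\ref{prop:zero_det} and the order count $2M+1$ in Theorem~\ref{th:pade_GLG} that forces the numerator and denominator of $R(z)$ to have degrees exactly $M$ and $M+1$. The only point worth stating explicitly is the reduction of the ImADER stability function to that of the $\L^2=0$ scheme, which follows from the iteration being independent of the previous correction; everything else is a direct citation of the Pad\'e results quoted from \cite{wanner1996solving}.
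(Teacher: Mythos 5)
Your argument is correct and is exactly the paper's (implicit) reasoning: the corollary follows by combining Theorem~\ref{th:pade_GLG} with the Pad\'e A-stability criterion of Theorem~\ref{th:pade_A_stab} for $k=M$, $j=M+1$, together with the observation already made at the start of Section~\ref{subsec:ADER_A-stability} that every ImADER stability function reduces to that of the ADER-IWF-RK ($\L^2=0$) scheme. Nothing is missing.
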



Deriving similar results for equispaced ADER methods poses a challenge. Numerically, starting from the fifth order onward, we notice instabilities along the imaginary axis and unstable regions within $\mathbb{C}^-$, with widths approaching machine precision. This observation suggests the possibility of the stability region intersecting that axis.

	\section{Convergence Analysis}
	\label{sec:convergence}
	As seen above, the order of accuracy of the DeC procedure \eqref{DeC_method} is $\textrm{min}\{K,Y\}$ where $K$ is the number of iteration and $Y$ the order of the $\L^2$ operator.
The proof of this statement, as stated in \cite{abgrall2017dec, Han_Veiga_2021,veiga2023improving}, requires some hypotheses that must be checked for the implicit and IMEX cases.
\begin{proposition}[DeC iterative method]\label{DeC_prop}
	Let $\L^1$ and $\L^2$ be two operators defined on $\mathbb{R}^{(M+1)\times I}$, 
	which depend on the discretization scale $\Delta = \Delta t$, such that
	\begin{itemize}
		\item[\namedlabel{itm:coercivity}{\bf{C1.}}] $\L^1$ is coercive with respect to a norm, i.e., 
		$\exists\, \gamma_1 >0$ independent of $\Delta$, such that for any $\bbc,\bbd$
		$$\gamma_1||\bbc-\bbd||\leq ||\L^1 (\bbc)-\L^1 (\bbd)||,$$
		\item[\namedlabel{itm:Lipschitz}{\bf{C2.}}] $\L^1 - \L^2$ is Lipschitz with constant $\gamma_2>0$
		uniformly with respect to $\Delta$, i.e., for any $\bbc,\bbd$
		$$
		||(\L^1(\bbc)-\L^2(\bbc))-(\L^1(\bbd)-\L^2(\bbd))||\leq \gamma_2 \Delta ||\bbc-\bbd||,
		$$
		\item[\namedlabel{itm:existence}{\bf{C3.}}] there exists a 
		unique $\bbc^*$ such that $\L^2(\bbc^*)=0$. 
	\end{itemize}
	Then, if $\eta:=\frac{\gamma_2}{\gamma_1}\Delta<1$,
	the DeC is converging to $\bbc^*$ and after $k$ iterations
	the error $||\bbc^{(k)}-\bbc^*||$ is smaller than $\eta^k||\bbc^{(0)}-\bbc^{*}||$.
\end{proposition}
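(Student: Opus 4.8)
The statement is a Banach-type fixed-point argument, so the plan is to recognize the exact solution $\bbc^*$ of \ref{itm:existence} as a fixed point of the DeC iteration map and then contract toward it. First I would observe that, since $\L^2(\bbc^*)=0$, the vector $\bbc^*$ trivially satisfies $\L^1(\bbc^*)=\L^1(\bbc^*)-\L^2(\bbc^*)$, i.e.\ it is left invariant by one sweep of \eqref{DeC_method}. Subtracting this identity from the defining relation $\L^1(\bbc^{(k)})=\L^1(\bbc^{(k-1)})-\L^2(\bbc^{(k-1)})$ gives
\begin{equation*}
\L^1(\bbc^{(k)})-\L^1(\bbc^*) = \bigl(\L^1(\bbc^{(k-1)})-\L^2(\bbc^{(k-1)})\bigr) - \bigl(\L^1(\bbc^*)-\L^2(\bbc^*)\bigr).
\end{equation*}

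Next I would apply the two structural hypotheses to the left and right sides respectively. Coercivity \ref{itm:coercivity} bounds the left side from below by $\gamma_1\,\|\bbc^{(k)}-\bbc^*\|$, while the Lipschitz property \ref{itm:Lipschitz} of $\L^1-\L^2$ bounds the right side from above by $\gamma_2\Delta\,\|\bbc^{(k-1)}-\bbc^*\|$. Chaining these two inequalities yields
\begin{equation*}
\gamma_1\,\|\bbc^{(k)}-\bbc^*\| \le \gamma_2\Delta\,\|\bbc^{(k-1)}-\bbc^*\|,
\qquad\text{hence}\qquad
\|\bbc^{(k)}-\bbc^*\| \le \eta\,\|\bbc^{(k-1)}-\bbc^*\|
\end{equation*}
with $\eta=\tfrac{\gamma_2}{\gamma_1}\Delta$. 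A straightforward induction on $k$ then gives $\|\bbc^{(k)}-\bbc^*\|\le\eta^k\|\bbc^{(0)}-\bbc^*\|$, and since $\eta<1$ by assumption the right-hand side tends to $0$, so $\bbc^{(k)}\to\bbc^*$. Note that \ref{itm:existence} is what guarantees the limit object is well defined and unique, so that the contraction target is unambiguous.

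Honestly, there is no serious obstacle here; the only point requiring a moment of care is the bookkeeping that makes $\bbc^*$ a genuine fixed point of the map implicitly defined by \eqref{DeC_method} (which relies precisely on $\L^2(\bbc^*)=0$), together with the implicit use of \ref{itm:coercivity} to ensure the iteration map is single-valued, i.e.\ that each $\bbc^{(k)}$ is uniquely determined by $\bbc^{(k-1)}$. Everything else is the textbook estimate. For the application in the following sections, the real work is not this proposition but verifying \ref{itm:coercivity}--\ref{itm:existence} for the specific implicit and IMEX choices of $\L^1$ and $\L^2$, which I would carry out separately.
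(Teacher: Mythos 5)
Your proof is correct and is exactly the standard contraction argument for this proposition (the paper itself does not reproduce a proof but defers to the cited references, where this same chain — fixed-point identity for $\bbc^*$ from \ref{itm:existence}, coercivity below, Lipschitz above, induction — is the argument given). Your closing remark that the real work lies in verifying \ref{itm:coercivity}--\ref{itm:existence} for the implicit and IMEX operators is precisely what the paper goes on to do in the subsequent propositions.
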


Proofs of this proposition and of the hypotheses of the proposition for operators $\L^1$ and $\L^2$ for explicit DeC and ADER can be found in \cite{abgrall2017dec,abgrall2018asymptotic,offner2019arbitrary}.
The condition for $\eta$ comes from the fixed--point theorem and it is needed to converge.
As foreshadowed in Proposition~\ref{DeC_prop}, we need to prove the conditions~\ref{itm:coercivity}, \ref{itm:Lipschitz} and \ref{itm:existence} also in the implicit and IMEX cases, as for the explicit cases this was shown in \cite{Han_Veiga_2021}.
Here, we want to extend this proof to the IMEX $\mathcal{L}^1$ operators. 
The proof of~\ref{itm:existence} is as in the explicit case, because just the $\mathcal{L}^1$ operator changes in the implicit and IMEX cases. 
The arguments to prove \ref{itm:Lipschitz} are also exactly the same as in the explicit case, because it all boils down to the Lipschitz continuity of the right hand side of the ODE. \\
In the general scenario, we find ourselves unable to prove coercivity. Therefore, we opt to linearize the stiff term in $\mathcal{L}^1$, as outlined in \cite{Han_Veiga_2021}. This linearization still provides a first-order approximation to the implicit terms.
We substitute $S(\bbc)$ with $S'(\vec{1}\bc_n)\bbc$, where $S'$ is the Jacobian of $S$. Note that this simplification is exact for linear systems where $S(\bbc)=\mat{S}'(\vec{1}\bc_n)\bbc$. 
Moreover, this formulation can also be used to incorporate the nonlinear solver inside the DeC iteration method, without the need of further nonlinear solvers \cite{Han_Veiga_2021}.
\begin{prop}[IMEX DeC: \ref{itm:coercivity}]\label{prop: ImDeC_Coercivity}
	Assume that we apply a first order approximation of the IMEX DeC method linearizing the implicit terms, i.e., $\mathcal{L}^1$ is defined as
	\begin{equation}
	\label{eq: L1_linearized_DeC}
	\tilde{\mathcal{L}}^1(\bbc):=\bbc -\vec{1}\bc_n-\Delta t\vec{\beta}S'(\vec{1}\bc_n)(\bbc-\vec{1}\bc_n)  - \Delta t\vec{\beta}\left(S(\vec{1}\bc_n)+F(\vec{1}\bc_n)\right).
	\end{equation}
	Let
	\begin{equation}\label{eq: timestep_restriction_IMDeC}
	\Delta t< \frac{1}{2\tilde{\beta} L},
	\end{equation}
	where $\tilde{\beta}:=\max\limits_{1\le m\le M}\{\beta^i\}\leq 1$ and $L$ is the Lipschitz constant of $S$.
	Then, given any $\bbc, \bbd \in \mathbb{R}^{(M+1)\times I}$, there exists a positive $C_0$, such that
	\begin{equation*}
	C_0||\bbc-\bbd||\leq ||\tilde{\mathcal{L}}^1 (\bbc)-\tilde{\mathcal{L}^1} (\bbd)||
	\end{equation*}
	is fulfilled for the $\tilde{\mathcal{L}}^1$ IMEX DeC operator.
\end{prop}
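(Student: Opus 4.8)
The plan is to exploit the fact that the linearization carried out in \eqref{eq: L1_linearized_DeC} makes $\tilde{\mathcal{L}}^1$ an \emph{affine} function of $\bbc$, so that establishing coercivity reduces to bounding from below the norm of its linear part. First I would note that the last summand $\Delta t\vec{\beta}\left(S(\vec{1}\bc_n)+F(\vec{1}\bc_n)\right)$ in \eqref{eq: L1_linearized_DeC} does not depend on $\bbc$ and therefore cancels when forming the difference $\tilde{\mathcal{L}}^1(\bbc)-\tilde{\mathcal{L}}^1(\bbd)$. What remains is
\[
\tilde{\mathcal{L}}^1(\bbc)-\tilde{\mathcal{L}}^1(\bbd) = (\bbc-\bbd) - \Delta t\,\vec{\beta}\,S'(\vec{1}\bc_n)(\bbc-\bbd),
\]
where the linear operator acts blockwise: on the $m$-th subtimestep component it reads $(\bc^m-\bd^m)-\Delta t\,\beta^m S'(\bc_n)(\bc^m-\bd^m)$.

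Second, equipping $\R^{(M+1)\times I}$ with the norm $\|\bbc\|^2=\sum_{m=0}^M\|\bc^m\|^2$, under which the block structure decouples, I would apply the reverse triangle inequality to obtain
\[
\|\tilde{\mathcal{L}}^1(\bbc)-\tilde{\mathcal{L}}^1(\bbd)\| \geq \|\bbc-\bbd\| - \Delta t\,\bigl\|\vec{\beta}\,S'(\vec{1}\bc_n)(\bbc-\bbd)\bigr\|.
\]
The subtracted term is then estimated by $\Delta t\,\tilde\beta\,\|S'(\bc_n)\|\,\|\bbc-\bbd\|$, using $\beta^m\leq\tilde\beta$ for every $m$. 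The single auxiliary fact needed here is that the operator norm of the Jacobian is controlled by the Lipschitz constant of $S$, i.e.\ $\|S'(\bc_n)\|\leq L$, which follows from the difference-quotient characterization of the derivative together with the Lipschitz bound $\|S(\bx)-S(\by)\|\le L\|\bx-\by\|$.

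Finally, combining the two estimates gives $\|\tilde{\mathcal{L}}^1(\bbc)-\tilde{\mathcal{L}}^1(\bbd)\|\geq(1-\Delta t\,\tilde\beta\,L)\|\bbc-\bbd\|$, and the timestep restriction \eqref{eq: timestep_restriction_IMDeC} forces $\Delta t\,\tilde\beta\,L<\tfrac12$, so that $C_0:=\tfrac12$ works and is in particular independent of $\Delta t$, as required by hypothesis~\ref{itm:coercivity}. I do not expect any genuine obstacle: the proof is essentially the reverse triangle inequality applied to an affine map, and the only points deserving care are the cancellation of the $\bbc$-independent term, the choice of a product norm under which the block-diagonal linear part decouples, and the elementary bound $\|S'\|\leq L$. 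An entirely analogous computation covers the ImsDeC $\tilde{\mathcal{L}}^1$ operator, with $\vec{\beta}$ replaced by the corresponding $\gamma^m$ coefficients.
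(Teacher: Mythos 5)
Your proof is correct, and it takes a genuinely different (and more elementary) route than the paper. The paper works with the block-diagonal matrix $\mat{Z}$ whose blocks are $\mat{Id}-\Delta t\,\beta^m S'(\vec{1}\bc_n)$: it bounds the eigenvalues of each block away from zero using the spectral shift property (an eigenvalue $\lambda$ of $\Delta t\beta^m S'$ with $|\lambda|<\tfrac12$ gives an eigenvalue $1-\lambda$ of $\mat{Z_m}$ with modulus $>\tfrac12$), deduces invertibility, concludes $\lVert\mat{Z}^{-1}\rVert\le 2$ from the eigenvalues of the inverse, and then writes $\lVert\bbc-\bbd\rVert\le\lVert\mat{Z}^{-1}\rVert\,\lVert\mat{Z}(\bbc-\bbd)\rVert$. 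You instead apply the reverse triangle inequality directly to $\lVert(\mat{Id}-\Delta t\vec{\beta}S')(\bbc-\bbd)\rVert$ and bound the perturbation by $\Delta t\,\tilde\beta\,\lVert S'\rVert\le\Delta t\,\tilde\beta L<\tfrac12$, landing on the same constant $C_0=\tfrac12$. Your version buys something real: the step in the paper that passes from an eigenvalue bound on $\mat{Z}^{-1}$ to a bound on its operator norm is only immediate when $S'$ is normal (spectral radius equals norm); for a general Jacobian the eigenvalue estimate alone does not control $\lVert\mat{Z}^{-1}\rVert$. Your argument needs only submultiplicativity of the operator norm and $\lVert S'\rVert\le L$, so it is both shorter and free of that caveat. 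The only points to watch are ones you already flag: the constant term must cancel in the difference (it does, since the linearization is frozen at $\vec{1}\bc_n$), and the blockwise bound $\beta^m\le\tilde\beta$ must be compatible with the chosen product norm, which it is.
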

\begin{proof}
	First, we recall some basic properties for eigenvalues, which we will use in the proof.
	\begin{itemize}	
		\item[\namedlabel{itm:eigen_bound}{i)}] 	Let $\lambda$ be an eigenvalue of $\mat{A}\in \mathbb{C}^{n\times n}$. Then, it holds
		$\lvert \lambda \rvert \le \norm{\mat{A}}$.
		\item[\namedlabel{itm:eigen_plus_id}{ii)}] Let $\mat{A}\in\mathbb{C}^{n\times n}$, $\mat{Id}$ the $n$-dimensional identity matrix and $\gamma, \delta \in \mathbb{C}$ and $\lambda \in \mathbb C$ an eigenvalue of $\mat{A}$.
		Then, $\gamma \lambda+\delta$ is an eigenvalue of $\gamma  \mat{A}+\delta \mat{Id}$.
	\end{itemize}
	We know that  $\norm{S'(\bc)}$ is bounded by $L$, the Lipschitz continuity constant of $S$, and, by property~\ref{itm:eigen_bound}, all the absolute values of the eigenvalues $\lambda_{S'}(\bc)$ of $S'(\bc)$ are bounded by $L$ for every $\bc\in \mathbb{R}^{I}$. 
	Now, using property \ref{itm:eigen_plus_id} and the restriction \eqref{eq: timestep_restriction_IMDeC}, we have that for every eigenvalue $\lambda_{\beta_m}$ of $\Delta t \beta_mS'(\bc)$ it holds $\lvert \lambda_{\beta_m}(\bc)\rvert<\frac{1}{2}$.
	Using property~\ref{itm:eigen_plus_id}, we can estimate for each $m=0,\dots,M$ the absolute value of the eigenvalues  of 
	\begin{equation*}
	\mat{Z_m}:=\mat{Id}-\Delta t \beta^m S'(\bc),
	\end{equation*}
	which are therefore all larger than $\frac{1}{2}$ for every $\bc \in \mathbb{R}^{I}, \; 1\le m  \le M$, leading to the invertibility of $\mat{Z_m}$. \\
	We consider the block-diagonal matrix $\mat{Z}$ with $\mat{Z_m}$ on each block-entry, that correspond to the system matrix of $\tilde{\L}^1$.
	The eigenvalues of $\mat{Z}^{-1}$ are the reciprocal of the eigenvalues of $\mat{Z}$, hence, all smaller than 2, and so $\lVert \mat{Z}^{-1}\rVert \leq 2$. Note that for any $\bbc, \bbd \in \mathbb{R}^{(M+1)\times I}$, it holds
	\begin{align}
	\label{eq: dec_coercivity_conclusion}
		\lVert \bbc -\bbd  \rVert = \lVert \mat{Z}^{-1} \mat{Z}(\bbc -\bbd ) \rVert \leq  \lVert \mat{Z}^{-1}\rVert \lVert \mat{Z}(\bbc -\bbd ) \rVert \leq 2 \lVert \mat{Z}(\bbc -\bbd ) \rVert.
	\end{align}
	By considering the $\tilde{\mathcal{L}}^1$ operator of the ImDeC, we expand
	\begin{align*}
	\tilde{\mathcal{L}}^1(\bbc)-\tilde{\mathcal{L}}^1(\bbd)&=\bbc-\bbc^0-\Delta t\vec{\beta}S'(\bbc^0)\bbc -\Delta t\vec{\beta}F(\bbc^0) - \bbd+\bbd^0+\Delta t\vec{\beta}S'(\bbc^0)\bbd +\Delta t\vec{\beta}F(\bbc^0)\\
	&=\left(\mat{Id}-\Delta t \vec{\beta}S'(\bbc^0)\right)\left(\bbc-\bbd\right)
	=Z\left(\bbc-\bbd\right).
	\end{align*}
	Finally, we conclude that
\begin{equation}
\norm{\tilde{\mathcal{L}}^1 (\bbc)-\tilde{\mathcal{L}^1} (\bbd)} = \norm{\mat{Z}(\bbc -\bbd )} \geq \frac12 \norm{\bbc -\bbd },
\end{equation} 
thanks to  \eqref{eq: dec_coercivity_conclusion}.
\end{proof}

\begin{prop}[IMEX ADER: \ref{itm:coercivity}]\label{prop:coercivity_ADER_IMEX}
	Assume we apply a first order linear approximation of the IMEX ADER method, i.e., we change the $\mathcal{L}^1$ operator to
	\begin{equation}
	\label{eq: L1_linearized_ADER}
	\tilde{\mathcal{L}}^1(\bbc):=\bbc-\bbc^0-\Delta t \M^{-1}\mat{R} S'(\bbc^0)(\bbc -\bbc^0) -\Delta t \M^{-1}\mat{R} (S(\bbc^0)+F(\bbc^0)).
	\end{equation}
	Let
	\begin{equation}\label{eq: timestep_restriction_IMADER}
	\Delta t< \frac{1}{2CL},
	\end{equation}
	where $C=\norm{\M^{-1}\mat{R}}=\mathcal{O}(1)$ and $L$ is the Lipschitz constant of $S$. 
	Then, given any $\bbc, \bbd \in \mathbb{R}^{(M+1)\times I}$, there exists a positive $C_0$, such that
	\begin{equation*}
	C_0||\bbc-\bbd||\leq ||\tilde{\mathcal{L}}^1 (\bbc)-\tilde{\mathcal{L}^1} (\bbd)||
	\end{equation*}
	is fulfilled for the $\tilde{\mathcal{L}}^1$ IMEX ADER operator.
\end{prop}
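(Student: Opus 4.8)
The plan is to mirror the proof of Proposition~\ref{prop: ImDeC_Coercivity}, but since the implicit coupling matrix $\M^{-1}\mat{R}$ is no longer (block-)diagonal, one cannot argue component by component via eigenvalues; instead I would work directly with operator norms and a Neumann-series argument. First I would compute the difference of the linearized operator at two arguments. In $\tilde{\mathcal{L}}^1$ the only term that depends on the running variable $\bbc$ (besides $\bbc$ itself) is the linear term $-\Delta t\,\M^{-1}\mat{R}\,S'(\bbc^0)\bbc$, since all terms involving $\bbc^0$ are frozen; hence, exactly as in the DeC case,
\[
\tilde{\mathcal{L}}^1(\bbc)-\tilde{\mathcal{L}}^1(\bbd)=\bigl(\mat{Id}-\Delta t\,\M^{-1}\mat{R}\,S'(\bbc^0)\bigr)(\bbc-\bbd)=:\mat{Z}(\bbc-\bbd).
\]
Here $\mat{Z}$ acts on $\R^{(M+1)\times I}$, with $\M^{-1}\mat{R}$ acting on the subtimestep index and $S'(\bbc^0)$ on the $I$ state components, i.e. $\mat{Z}=\mat{Id}$ minus a Kronecker-type product of the two factors.

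Second, I would bound the perturbation. By submultiplicativity of the norm (the Frobenius norm, or the norm induced by $\ell^2$, as used throughout the paper) together with $\norm{S'(\bbc^0)}\le L$ and the definition $C=\norm{\M^{-1}\mat{R}}$, one gets
\[
\norm{\Delta t\,\M^{-1}\mat{R}\,S'(\bbc^0)}\le \Delta t\,C L<\tfrac12
\]
thanks to the timestep restriction \eqref{eq: timestep_restriction_IMADER}. At this point I would also recall that $C=\norm{\M^{-1}\mat{R}}=\mathcal{O}(1)$, which holds because, on the reference interval $[0,1]$, both $\M$ and $\mat{R}$ depend only on the chosen quadrature nodes and weights and not on $\Delta t$ (cf. Proposition~\ref{prop:RHS_ADER}).

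Third, since $\norm{\Delta t\,\M^{-1}\mat{R}\,S'(\bbc^0)}<1$, the Neumann series shows that $\mat{Z}$ is invertible with $\norm{\mat{Z}^{-1}}\le(1-\tfrac12)^{-1}=2$. Then, exactly as in \eqref{eq: dec_coercivity_conclusion},
\[
\norm{\bbc-\bbd}=\norm{\mat{Z}^{-1}\mat{Z}(\bbc-\bbd)}\le\norm{\mat{Z}^{-1}}\,\norm{\mat{Z}(\bbc-\bbd)}\le 2\,\norm{\mat{Z}(\bbc-\bbd)}=2\,\norm{\tilde{\mathcal{L}}^1(\bbc)-\tilde{\mathcal{L}}^1(\bbd)},
\]
which gives the claimed coercivity with $C_0=\tfrac12$.

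The only genuinely delicate point is the bookkeeping of the two tensor factors: $\M^{-1}\mat{R}$ multiplies on the $(M+1)$-block structure while $S'(\bbc^0)$ multiplies on the $I$-dimensional state, and one must check that the norm chosen on $\R^{(M+1)\times I}$ is cross-submultiplicative with respect to this product — which it is for the Frobenius norm and for the $\ell^2$-induced norm. Everything else is the Neumann-series estimate already used for DeC, and in contrast to Proposition~\ref{prop: ImDeC_Coercivity} no eigenvalue argument is needed here.
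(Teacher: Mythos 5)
Your proof is correct, and it reaches the same intermediate target as the paper --- namely $\norm{\mat{Z}^{-1}}\leq 2$ for $\mat{Z}=\mat{Id}-\Delta t\,\M^{-1}\mat{R}\,S'(\bbc^0)$ --- but by a genuinely different route. The paper argues via eigenvalues: it bounds the spectrum of $\M^{-1}\mat{R}S'$ by $\norm{\M^{-1}\mat{R}}\norm{S'}\leq CL$, shifts by the identity to conclude that all eigenvalues of $\mat{Z}$ exceed $\tfrac12$ in modulus, and then passes from the eigenvalues of $\mat{Z}^{-1}$ to its norm "with the same arguments as in Proposition~\ref{prop: ImDeC_Coercivity}." You instead bound $\norm{\Delta t\,\M^{-1}\mat{R}\,S'(\bbc^0)}\leq \Delta t\,CL<\tfrac12$ directly and invoke the Neumann series to get $\norm{\mat{Z}^{-1}}\leq (1-\tfrac12)^{-1}=2$. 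Your route is actually the more robust one here: the step from "all eigenvalues of $\mat{Z}^{-1}$ have modulus less than $2$" to "$\norm{\mat{Z}^{-1}}\leq 2$" is only automatic for normal matrices, and $\M^{-1}\mat{R}S'$ has no reason to be normal (unlike the block-diagonal $\beta^m$-scaled Jacobians in the DeC case, where at least the block structure is simple). The Neumann-series estimate sidesteps this entirely and needs only submultiplicativity of the chosen norm, which, as you correctly flag, must be checked to be compatible with the Kronecker-type action of $\M^{-1}\mat{R}$ on the subtimestep index and $S'$ on the state index --- true for the Frobenius and $\ell^2$-induced norms. Both approaches yield $C_0=\tfrac12$; yours does so with one fewer unstated hypothesis.
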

\begin{proof}
	Also for the $\tilde{\mathcal{L}}^1$ operator of the ImADER, the proof is similar to the ImDeC one. 
	We know that $\norm{S'(\bc)}\le L$ holds for every $\bc$ and $C=\norm{\M^{-1}\mat{R}}>0$, because $\mat{M}$ and $\mat{R}$ are constant. 
	Therefore, we can deduce that the eigenvalues $\lambda_{M^{-1}R}(\bbc)$ of $\M^{-1}\mat{R}S'(\bbc)$ can be estimated by 
	\begin{equation*}
	\lvert \lambda_{M^{-1}R}(\bbc) \rvert \le \norm{(\M^{-1}\mat{R})S'(\bbc)} \le \norm{\M^{-1}\mat{R}}\norm{S'(\bbc)} \le C \cdot L
	\end{equation*}
	for every $\bc$. 
	This, combined with condition \eqref{eq: timestep_restriction_IMADER}, leads us again to the property that all the absolute values of the eigenvalues of
	\begin{equation*}
	\mat{Id}-\Delta t \M^{-1}\mat{R}S'(\bbc)
	\end{equation*} 
	are bigger than $\frac{1}{2}$.
	With the same arguments as in the proof of Proposition~\ref{prop: ImDeC_Coercivity}, we conclude that $\tilde{\mathcal{L}}^1$ is coercive.
\end{proof}
In many applications, the hypothesis on the time-step as assumed in Propositions~\ref{prop: ImDeC_Coercivity} and \ref{prop:coercivity_ADER_IMEX} are too restrictive, especially when considering stiff equations. So, we present a variation of this proof, which does not require these time-step restrictions but uses, instead, another assumption on $S$ that is typical for damping/diffusion operators.
\begin{prop}[IMEX DeC/ADER: Variation of \ref{itm:coercivity} for diffusion terms]
	Consider again the first order approximations $\tilde{\mathcal{L}^1}$ as in \eqref{eq: L1_linearized_DeC} for the IMEX DeC and in \eqref{eq: L1_linearized_ADER} for IMEX ADER. 
	Assume additionally that the Jacobian $S'$ is symmetric negative definite.
	Then, given any $\bbc, \bbd \in \mathbb{R}^{(M+1)\times I}$, there exists a positive $C_0\geq 1$ independent of $\Delta t$, such that
	\begin{equation*}
	C_0||\bbc-\bbd||\leq ||\tilde{\mathcal{L}}^1 (\bbc)-\tilde{\mathcal{L}^1} (\bbd)||
	\end{equation*}
	is fulfilled for both $\tilde{\mathcal{L}}^1$ operators.
\end{prop}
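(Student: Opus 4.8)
The plan is to follow the template of Propositions~\ref{prop: ImDeC_Coercivity} and \ref{prop:coercivity_ADER_IMEX}. Because the implicit part of $\tilde{\mathcal{L}}^1$ is linearized, $\tilde{\mathcal{L}}^1(\bbc)-\tilde{\mathcal{L}}^1(\bbd)$ is linear in $\bbc-\bbd$ and equals $\mat{Z}(\bbc-\bbd)$ for a fixed matrix $\mat{Z}$: the block-diagonal matrix with blocks $\mat{Id}-\Delta t\,\beta^m S'(\bc^0)$, $m=0,\dots,M$, in the IMEX DeC case, and $\mat{Z}=\mat{Id}-\Delta t\,\mat{Q}\otimes S'(\bc^0)$ with $\mat{Q}=\M^{-1}\mat{R}$ acting on the sub-node index in the IMEX ADER case. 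It therefore suffices to produce a norm on $\R^{(M+1)\times I}$ for which $\lVert\mat{Z}\bbv\rVert\ge\lVert\bbv\rVert$ for all $\bbv$, which proves the claim with $C_0=1$. The hypothesis enters only through the fact that $-S'(\bc^0)$ is symmetric positive definite, hence $-S'(\bc^0)=U\,\mathrm{diag}(\nu_1,\dots,\nu_I)\,U^T$ with $U$ orthogonal and all $\nu_j>0$.

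For IMEX DeC I would use the plain Euclidean norm. The $m$-th diagonal block $\mat{Id}-\Delta t\,\beta^m S'(\bc^0)=\mat{Id}+\Delta t\,\beta^m(-S'(\bc^0))$ is symmetric with spectrum $\{1+\Delta t\,\beta^m\nu_j\}_j\subset[1,\infty)$ because $\beta^m\ge 0$, so it contracts no vector of $\R^I$; summing the block inequalities gives $\lVert\mat{Z}\bbv\rVert\ge\lVert\bbv\rVert$. This part is routine.

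The ADER case is the genuine obstacle, because $\mat{Q}=\M^{-1}\mat{R}$ is not symmetric and its symmetric part is in general indefinite (for Gauss--Lobatto nodes $\mat{Q}$ equals the Lobatto~IIIC matrix), so the Euclidean estimate fails. The remedy is to use the weighted inner product $\langle\bbv,\bbw\rangle_{\mat{R}}:=\sum_{m=0}^M w_m\,\bv^m\cdot\bw^m$, which is a genuine inner product since the quadrature weights $w_m$ are positive. Diagonalizing $S'(\bc^0)$ decouples $\lVert\mat{Z}\bbv\rVert_{\mat{R}}\ge\lVert\bbv\rVert_{\mat{R}}$ into the $I$ statements $\lVert(\mat{Id}+\sigma\,\mat{Q})\bw\rVert_{\mat{R}}\ge\lVert\bw\rVert_{\mat{R}}$ for $\bw\in\R^{M+1}$, with $\sigma=\Delta t\,\nu_j>0$, and the latter is equivalent to
\begin{equation*}
	\sigma\bigl(\mat{R}\mat{Q}+\mat{Q}^T\mat{R}\bigr)+\sigma^2\,\mat{Q}^T\mat{R}\mat{Q}\;\succeq\;0 .
\end{equation*}
The $\sigma^2$-term is positive semidefinite, since $\mat{Q}^T\mat{R}\mat{Q}=(\mat{R}^{1/2}\mat{Q})^T(\mat{R}^{1/2}\mat{Q})$, so the crux is $\mat{R}\mat{Q}+\mat{Q}^T\mat{R}\succeq 0$. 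For this I would reuse the identities from the proof of Proposition~\ref{prop:zero_det}: on $[t_n,t_{n+1}]=[0,1]$, $\mat{R}=\mathrm{diag}(w_m)$ and $M_{ij}=\phi_i(1)\phi_j(1)-\int_0^1\phi_i'(t)\phi_j(t)\,\dd t$, hence by the product rule
\begin{equation*}
	(\M+\M^T)_{ij}=2\phi_i(1)\phi_j(1)-\int_0^1(\phi_i\phi_j)'(t)\,\dd t=\phi_i(1)\phi_j(1)+\phi_i(0)\phi_j(0),
\end{equation*}
i.e.\ $\M+\M^T=\vec{\phi}(1)\vec{\phi}(1)^T+\vec{\phi}(0)\vec{\phi}(0)^T$ is a sum of two rank-one positive semidefinite matrices, so
\begin{equation*}
	\mat{R}\mat{Q}+\mat{Q}^T\mat{R}=\mat{R}\M^{-1}\mat{R}+\mat{R}\M^{-T}\mat{R}=\mat{R}\M^{-1}\bigl(\M+\M^T\bigr)\M^{-T}\mat{R}\;\succeq\;0
\end{equation*}
as a congruence of a positive semidefinite matrix. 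Substituting this back closes the ADER case with $C_0=1$.

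The step I expect to require the most care is the identity $M_{ij}=\phi_i(1)\phi_j(1)-\int_0^1\phi_i'\phi_j\,\dd t$, which presupposes that the quadrature integrates the degree-$(2M-1)$ polynomials $\phi_i'\phi_j$ exactly. This holds for Gauss--Legendre and Gauss--Lobatto nodes and quadrature---precisely the regime in which positivity of the weights $w_m$, needed for $\langle\cdot,\cdot\rangle_{\mat{R}}$ to be an inner product, is also guaranteed. I would therefore state the ADER half of the proposition under this node/quadrature assumption, consistent with the A-stability discussion in Section~\ref{subsec:ADER_A-stability}.
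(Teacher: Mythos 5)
Your proposal is correct, and the DeC half coincides with the paper's argument (symmetric positive definite blocks $\mat{Id}-\Delta t\,\beta^m S'$ with spectrum in $[1,\infty)$). For the ADER half, however, you take a genuinely different route. The paper argues spectrally: it invokes positive definiteness of $\M$ (via its relation to a Hilbert matrix) and of $\mat{R}$, concludes that the eigenvalues of $\mat{Z}=\mat{Id}-\Delta t\,\M^{-1}\mat{R}\,S'$ exceed one, and then proceeds as in the DeC case. You instead work with the quadrature-weighted inner product $\langle\cdot,\cdot\rangle_{\mat{R}}$, reduce coercivity to the matrix inequality $\sigma(\mat{R}\mat{Q}+\mat{Q}^T\mat{R})+\sigma^2\mat{Q}^T\mat{R}\mat{Q}\succeq 0$, and settle it with the identity $\M+\M^T=\vec{\phi}(1)\vec{\phi}(1)^T+\vec{\phi}(0)\vec{\phi}(0)^T$, so that $\mat{R}\mat{Q}+\mat{Q}^T\mat{R}$ is a congruence of a positive semidefinite matrix. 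What this buys you is a genuine norm estimate: since $\mat{Q}$ is not symmetric (and, as you note, its symmetric part can be indefinite), eigenvalue bounds on $\mat{Z}$ do not by themselves control $\lVert\mat{Z}^{-1}\rVert$, so your quadratic-form computation is actually tighter than the paper's spectral shortcut at precisely this point. The price is the explicit restriction that the quadrature be exact on $\phi_i'\phi_j$ and have positive weights with $\mat{R}$ diagonal, i.e.\ Gauss--Legendre or Gauss--Lobatto nodes; the paper's proof implicitly leans on the same quadrature properties (positive weights for $\mat{R}\succeq 0$), so the restriction is comparable, but you state it openly. Since the $\mat{R}$-weighted norm is equivalent to the Euclidean one with $\Delta t$-independent constants, and condition \ref{itm:coercivity} only asks for coercivity in some norm, your $C_0=1$ conclusion fits the proposition as stated.
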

\begin{proof}
	We start from the IMEX DeC. Using again property~\ref{itm:eigen_plus_id} from the proof of Proposition \ref{prop: ImDeC_Coercivity}, we can deduce immediately that $\mat{Z}:=\mat{Id}-\Delta t \beta^m S'$ is symmetric positive definite and any eigenvalue $\lambda_{Z}$ is larger than $1+ \Delta t \beta^m |\lambda_{S'}|>1$. Hence, the eigenvalues of $\mat{Z}^{-1}$ are all smaller than one. 
	Therefore, as in Proposition~\ref{prop: ImDeC_Coercivity}, we have that 
	\begin{equation}\label{eq:coerc_L1_pos_def}
		\norm{\tilde{\L}^1(\bbc)-\tilde{\L}^1(\bbd)} = \norm{\mat{Z}(\bbc - \bbd) } \geq \frac{1}{\norm{\mat{Z}^{-1}}}\norm{\bbc-\bbd} > \norm{\bbc-\bbd}. 
	\end{equation}
	This result holds independently of the size of $\Delta t$.	
	For IMEX ADER the matrix in consideration is $$\mat{Z}:= \mat{Id}-\Delta t \mat{M}^{-1}\mat{R} S'(\bc_n),$$
	where, implicitly, there is a Kronecker product between $\mat{M}^{-1}\mat{R}\in \mathbb R^{(M+1)\times (M+1)}$ and $S'(\bc_n)\in\mathbb R^{I\times I}$. We know that $-S'(\bc_n)$ is positive definite, if also $\mat{M}^{-1}\mat{R}$ is positive define. Then, we have that their Kronecker product is positive definite \cite[Section 1]{vanloan1993approximation}. In \cite{veiga2023improving}, it has been shown that $\mat{M}$ is invertible and it is equivalent to a Hilbert matrix with an extra column of zeros on the left and an extra row of ones on the top.  
	Since, Hilbert matrices are positive definite, also $\mat{M}$ is positive definite and its inverse is positive definite as well.
	Now, $\bbc^T\mat{R}\bbc = \int_{0}^{1} \bc(t)^2 \geq 0$ when computed exactly, and for Gauss--Lobatto nodes, where the quadrature is not exact \cite{veiga2023improving}, it is $\bbc^T\mat{R}\bbc = \sum_{m=0}^M w_m (\bc^{m})^2 \geq 0$ as $w_m>0$. Hence,  $\mat{R}$ is positive definite and all eigenvalues of $\mat{Z}$ are $\lambda_Z>1$. We can then proceed as in \eqref{eq:coerc_L1_pos_def} to show that $\tilde{\L}^1$ is coercive with $C_0\geq 1$.
\end{proof}

	\section{Numerical Stability Analysis}
	\label{sec: stability_analysis_ODE}
	In this section, we study the stability of the presented method for the linear Dahlquist equation $u' = -\lambda u$ or $u' = -\lambda_I u - \lambda_E u$ for IMEX methods. All methods can be rewritten as $u_{n+1} = R(z)u_n$ or $u_{n+1}= R(z_I,z_E)$ being $R$ stability functions and $z,z_I,z_E\in \mathbb C$. 
We will study the stability regions $\lbrace |R|\leq 1\rbrace \subset \mathbb C$ for the implicit ADER/DeC, while for the IMEX schemes we consider different approaches. The explicit cases the stability functions for DeC and ADER have already been investigated in \cite{Han_Veiga_2021} and reported, while we show in the repository \cite{ourrepo} the sDeC for completeness. 
We will use Gauss-Lobatto (GLB) nodes as quadrature nodes, while we compare equispaced nodes also in the repository \cite{ourrepo} and we highlight here only the main differences.
We will numerically compute the stability regions obtained from the stability functions of ADER/DeC that are defined through their Butcher tableaux \eqref{eq:ImDeC_RK_ButcherTableu}, \eqref{eq:IMEX_DeC_RK}, \eqref{eq:ImExADER_RK_ButcherTableu}, see \cite{hairer1987solving}.
In detail, we compute on $200\times 200$ grid points with an offset of $+0.01$ from the origin for both axes to avoid singularities. 
The plot bounds are dependent on the type of scheme and their stability regions.
We decreased the offset in Figures~\ref{fig: ODE_minor_instabilitys} to a fraction of $10^{-2}$ of the largest real value displayed when zooming on small areas.

To distinguish the different orders, we apply different colors and line styles to the outer and inner bounds according to the legend that are plot next to each stability region plot.
\subsection{Implicit schemes}
In the following, we plot the contour lines of the bounds of the stability regions of various implicit methods. We start from ImDeC and ImADER schemes in Figure~\ref{fig: ODEIMDeCADER}. 
Clearly, all these stability regions are unbounded, but they are not all A-stable, as we will see soon. Moreover, we can observe a great variability changing the scheme or the nodes, in opposition to the explicit case \cite{Han_Veiga_2021}. In most of the cases, ImADER have larger stability regions than ImDeC.
\begin{figure}
	\centering
	\includegraphics[width=0.465\textwidth,trim={215 340 32 22}, clip]{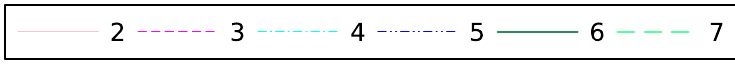}\!\!
	\includegraphics[width=0.515\textwidth,trim={179 340 30 22}, clip]{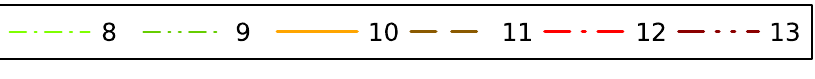}\\
	\begin{minipage}[t]{0.32\textwidth}
		\centering
		\includegraphics[width=\textwidth, trim={0 0 0 0}, clip]{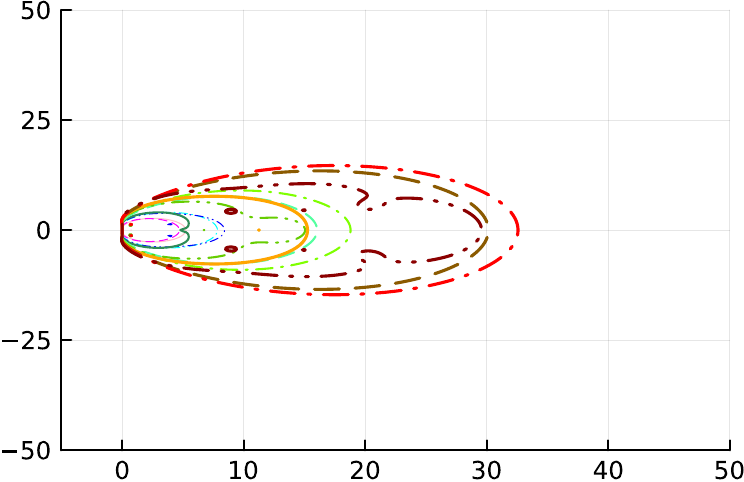}\\
		ImDeC GLB
	\end{minipage}
	\begin{minipage}[t]{0.32\textwidth}
		\centering
	\includegraphics[width=\textwidth, trim={0 0 0 0}, clip]{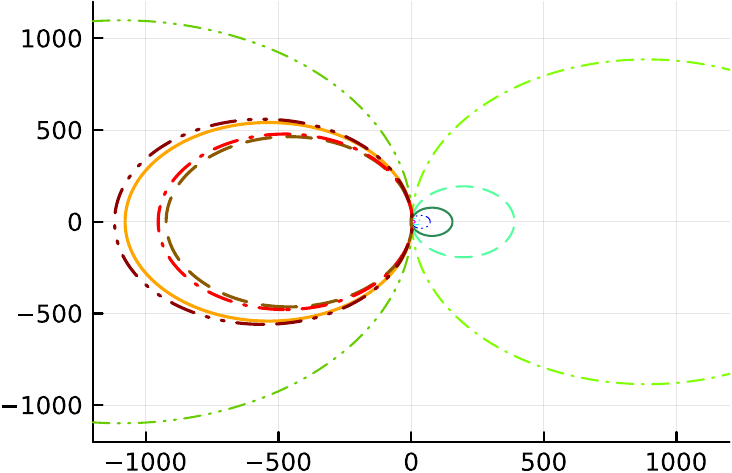}\\
	ImsDeC GLB
	\end{minipage}
	\begin{minipage}[t]{0.32\textwidth}
		\centering
		\includegraphics[width=\textwidth, trim={0 0 0 0}, clip]{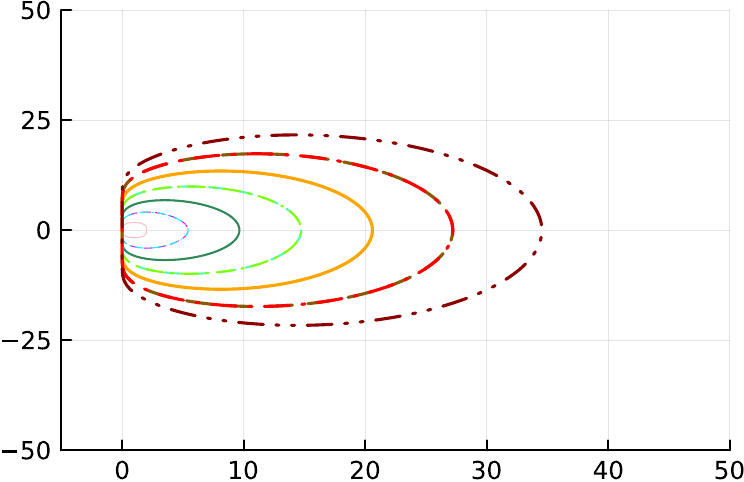}\\
		ImADER GLB
	\end{minipage}
	\caption{Implicit DeC (left), sDeC (center) and ADER (right) with equispaced (top) and Gauss-Lobatto (bottom) nodes for orders 2 to 13}
	\label{fig: ODEIMDeCADER}
\end{figure}

\begin{figure}
	\centering
	\begin{minipage}[t]{0.33\textwidth}
		\centering
		\includegraphics[width=\textwidth, trim={0 0 0 0}, clip]{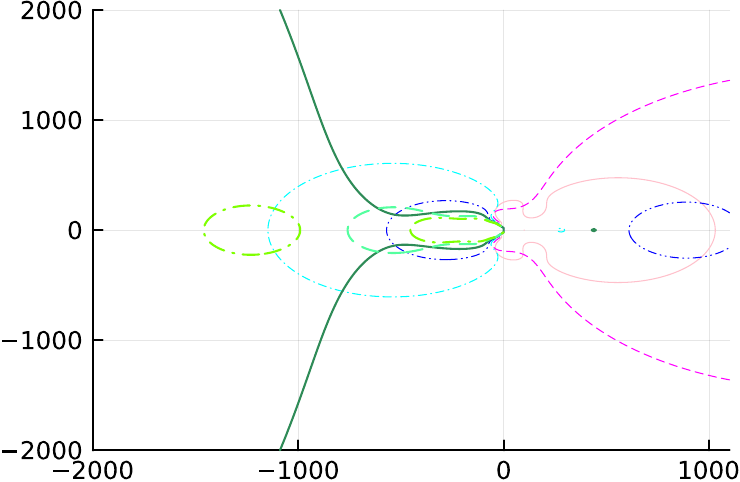}\\
		ImsDeC eq
	\end{minipage}
	\begin{minipage}[t]{0.33\textwidth}
		\centering
		\includegraphics[width=\textwidth, trim={0 0 0 0}, clip]{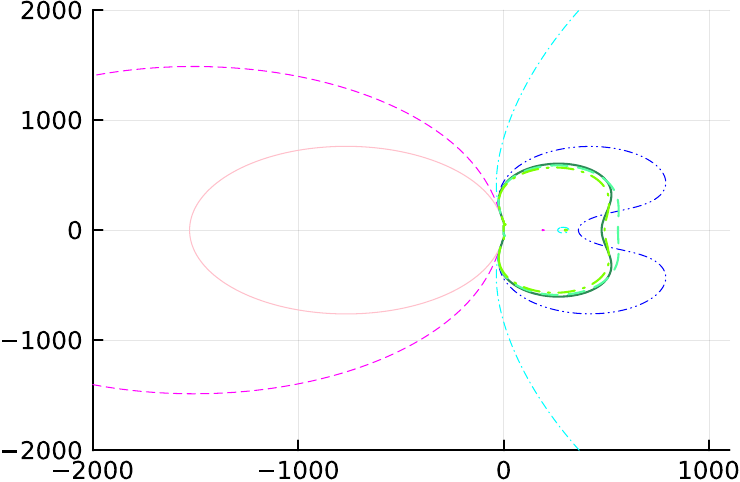}\\
		ImsDeC GLB
	\end{minipage}
	\includegraphics[width=0.1\textwidth, trim={491 230 30 23}, clip]{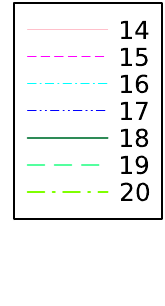}
	\caption{Implicit sDeC for orders 14 to 20}
	\label{fig: exaImsDeC_high}
\end{figure}
The stability regions for the implicit sDeC (ImsDeC) are also shown in Figure~\ref{fig: ODEIMDeCADER} and, surprisingly, do not behave like the other methods. 
Up to a certain order (i.e. sDeC8 with Gauss-Lobatto), the stability regions are unbounded and \textit{seem} A-stable, but for higher orders, we lose this property, obtaining large, but finite, stability regions. 
This behavior is not uniform and, at certain orders, the stability region will be unbounded again, as for example shown in Figure~\ref{fig: exaImsDeC_high} for very high order ImsDeC. 
%
%
In detail, the methods with bounded stability region up to order 20 are the ImsDeC with Gauss--Lobatto nodes with orders 9, 10, 11, 12, 13, 14 and 15 and with equispaced nodes with orders 12, 13, 16, 17, 18, 19 and 20. 
For the sDeC, we can conclude that the choice of an implicit version does not guarantee an unbounded stability region. Nevertheless, even these implicit sDeC methods have larger stability regions than their explicit counterparts and therefore may be applicable to mildly stiff problems.
We notice again that this odd loss of stability in the left half plane could not be found in the ImDeC and ImADER methods. We checked it numerically up to order 50.

\begin{figure}
	\centering
	\includegraphics[width=0.465\textwidth,trim={215 340 33 22}, clip]{pdf/odepics/colors_a-d_new_horiz_2-7_no_order.pdf}\!\!
	\includegraphics[width=0.515\textwidth,trim={179 340 30 22}, clip]{pdf/odepics/colors_a-d_new_horiz_8-13_no_order.pdf}\\
	\begin{minipage}[t]{0.325\textwidth}
		\includegraphics[width=\textwidth]{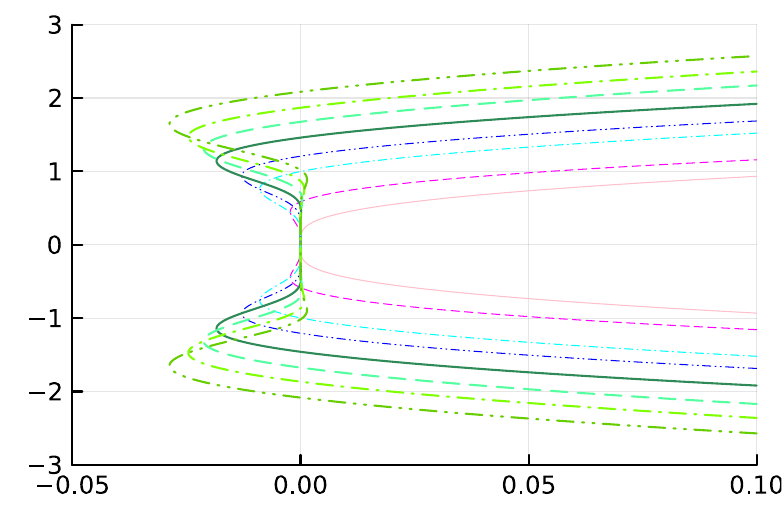}
		\centering
		ImDeC GLB from order 2 to 9
	\end{minipage}
	\begin{minipage}[t]{0.325\textwidth}
		\includegraphics[width=\textwidth]{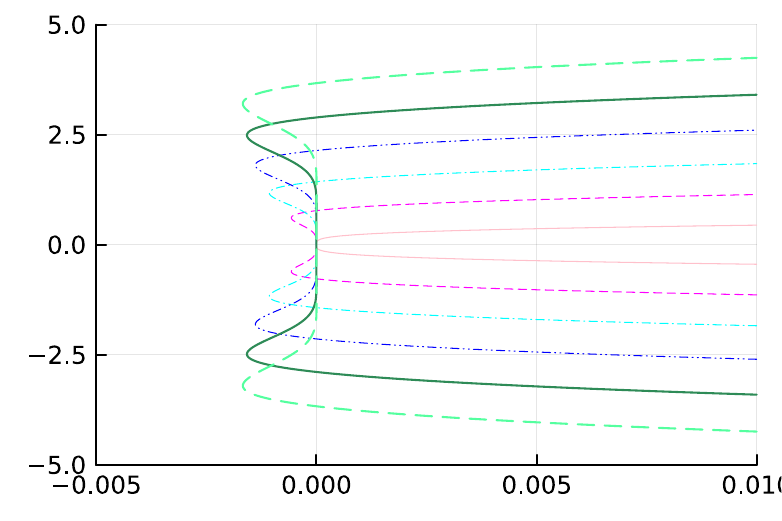}
		\centering
		ImsDeC GLB from orders 2 to 7
	\end{minipage}
	\begin{minipage}[t]{0.325\textwidth}
		\includegraphics[width=\textwidth]{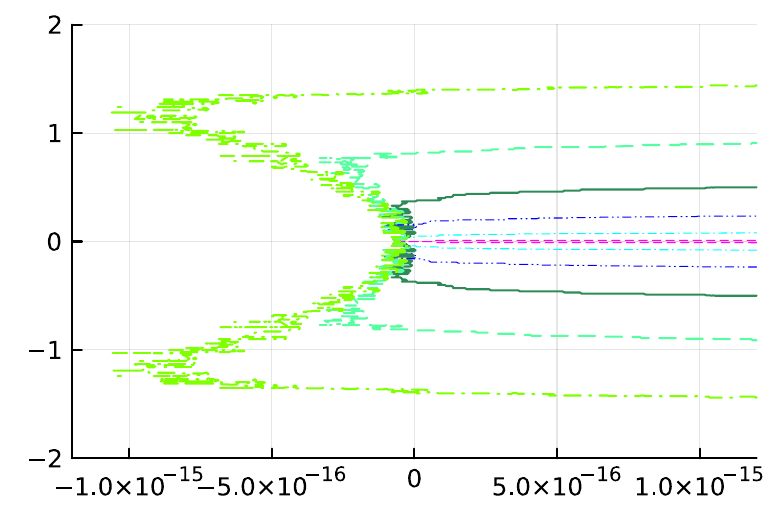}
		\centering
		ImADER eq from orders 3 to 8
	\end{minipage}
	\caption{Zoomed stability region of various implicit schemes}
	\label{fig: ODE_minor_instabilitys}
\end{figure}

Taking a closer look at the implicit methods, we additionally detect some minor instability regions on the negative half-plane, see Figure~\ref{fig: ODE_minor_instabilitys}.
It turns out that these instabilities appear for all ImDeC and ImsDeC methods of orders larger than 2 and both types of nodes. We display the sDeC only with GLB nodes, as the equispaced nodes version is very similar to that.
For a very small scale, the same can be seen for the ImADER methods with equispaced nodes of orders at least larger than 4, displayed on the bottom right of Figure~\ref{fig: ODE_minor_instabilitys}. 
Notice that the sizes of the unstable regions are close to machine precision, which results in non-smooth boundaries and it is unclear if the ImADER with equispaced nodes are not A-stable or the visualization of the unstable area is given by machine precision errors.

As proved in Section~\ref{subsec:ADER_A-stability}, also numerically we observe that the ImADER methods with Gauss-Lobatto nodes are A-stable for all orders. 


Summarizing, we can categorize our methods in 3 different classes: 
\begin{itemize}
	\item The A-stable schemes: all ImADER GLB and all second order implicit methods;
	\item The \textit{almost A-stable} schemes, when the stability region is unbounded and it \textit{almost} includes the whole left half--plane: high order ImDeC, some ImsDeC and ImADER equispaced;
	\item The bounded stability schemes: some ImsDeC.
\end{itemize}

Remark that for \textit{almost A-stable} methods these minor instabilities do not influence the behavior of the scheme on many stiff problems. Nevertheless, when the eigenvalues $\lambda$ of the system are (almost) purely imaginary (typical for high order advection operators), they might encounter instabilities for some discretizations.

\subsection{IMEX schemes}
To study the stability of IMEX schemes, we will use the RK stability function 
\begin{equation}\label{eq:ImExStabilityFunction}
R(z_I, z_E)=1+\left(z_I\vec{b}^T+z_E\vec{\hat{b}}^T\right)\vec{u}=1+\left(z_I\vec{b}^T+z_E\vec{\hat{b}}^T\right)\left(\mat{Id}-z_I\mat{A}-z_E\mat{\hat{A}}\right)^{-1} \vec{1}
\end{equation}
that uses the matrices defined by the Butcher tableau of an IMEX RK \eqref{eq:IMEX_Butcher}.
Remark that the standard approach of A-stability cannot be used anymore.
Indeed, the region of absolute stability $$S=\left\{(z_I,z_E)\in \mathbb{C}^2 \ : \ \lvert{R(z_I,z_E)}\rvert\le1\right\}$$ lays in a larger space, with respect to classical RK schemes, therefore, its study, computation and visualization are challenging.
Hence, we need to rely on some simplifications.
In \cite{minion2003dec}, Minion simplifies the Dahlquist equation by imposing
\begin{equation*}
\lambda_I \in \mathbb{R}, \quad \lambda_E =i\lambda_E', \ \lambda_E' \in \mathbb{R}.
\end{equation*} 
This procedure neglects respectively the imaginary or real part of the coefficients in the Dahlquist equation to display a two-dimensional region. This idea is lead by classical PDE discrete operators, where typically the diffusion is symmetric negative definite, while the advection is mainly with imaginary eigenvalues. 
A second approach where for each $\lambda_E$ the A-stability is required for the implicit part of the scheme was originally studied in \cite{zhong1996additive,caflisch1997uniformly} and formalized in \cite{liotta2000central}. Another approach studies, instead, the stability for each $\lambda_I$ requiring at least the stability region of the explicit Euler method to the explicit part \cite{Hundsdorfer}.
We collect these definitions of stability region in the following.
\begin{definition}[Stability regions]
	Consider the modified test equation with stability function \eqref{eq:ImExStabilityFunction}. Then, we define multiple approaches for IMEX stability regions by
	\begin{itemize}
		\item $S:=\left\{(z_I,z_E)\in \mathbb{C}^2 \ :\ \lvert{R(z_I,z_E)}\rvert\le1\right\}$ (Region of absolute stability),
	\item  $\mathcal{D}_M:= \left\{(z_I,z_E)\in \mathbb{R}^2 \ :\ \lvert{R(z_I,iz_E)}\rvert\le1\right\}$  (Minion's stability region) \cite{minion2003dec}, 
	\item  $\mathcal{D}_0:=\left\{z_E \in \mathbb{C}\ :\ \lvert{R(z_I,z_E)}\rvert \le 1 \textrm{ for any } z_I \in \mathbb{C}^- \right\}$ \cite{liotta2000central},
	\item $\mathcal{D}_1:=\left\{z_I \in \mathbb{C}\ :\ \lvert{R(z_I,z_E)}\rvert \le 1 \textrm{ for any } z_E \in \mathcal{S}_0 \right\}$ \cite{Hundsdorfer},
	\end{itemize}
	where $\mathcal{S}_0=\left\{z_E \in \mathbb{C}\ :\ \lvert 1+z_E\rvert \le 1 \right\}$ is the stability region of the explicit Euler method. 
\end{definition}
$\mathcal D_0$ is a very strict condition of IMEX stability, in particular for the considered high order schemes. 
Theoretically, the terms A-stability and A($\alpha$)-stability may be applied for all 3 of these subsets of $\mathbb{C}$ analogously to the classical cases, so we will make use of this terminology too. 

\subsubsection*{$\mathcal{D}_M$ stability region}
\begin{figure}
	\centering
	\includegraphics[width=0.465\textwidth,trim={215 340 32 22}, clip]{pdf/odepics/colors_a-d_new_horiz_2-7_no_order.pdf}\!\!
	\includegraphics[width=0.515\textwidth,trim={179 340 30 22}, clip]{pdf/odepics/colors_a-d_new_horiz_8-13_no_order.pdf}\\
	\begin{minipage}[t]{0.32\textwidth}
		\centering
		\includegraphics[width=\textwidth, trim={0 0 0 0}, clip]{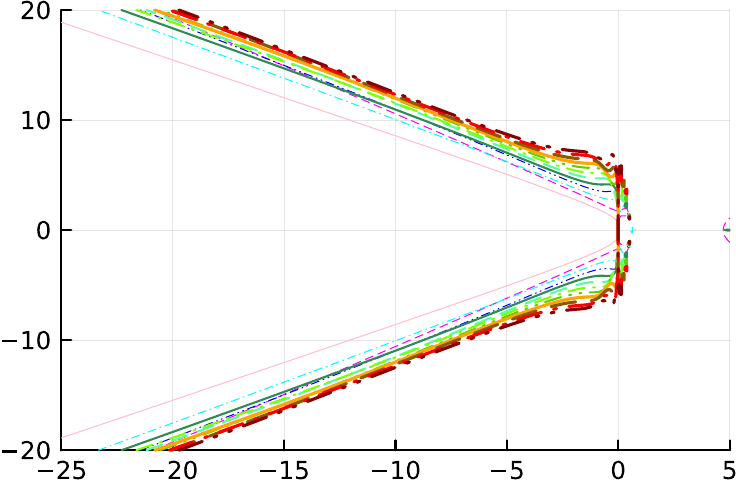}
		IMEXDeC GLB orders 2 to 13
	\end{minipage}
	\begin{minipage}[t]{0.32\textwidth}
		\centering
		\includegraphics[width=\textwidth, trim={0 0 0 0}, clip]{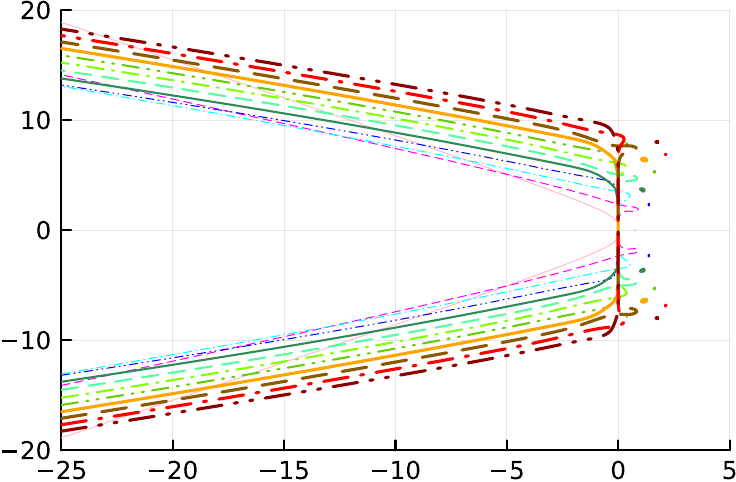}
		IMEXsDeC GLB orders 2 to 13
	\end{minipage}
	\begin{minipage}[t]{0.32\textwidth}
		\centering
		\includegraphics[width=\textwidth, trim={0 0 0 0}, clip]{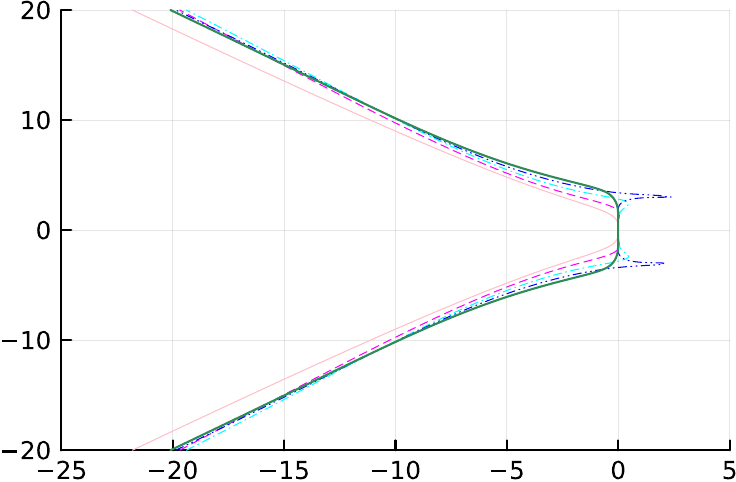}
		IMEXADER GLB orders 2 to 6
	\end{minipage}
	\caption{Minion's stability region for IMEX DeC (left), sDeC (center) and ADER (right) with equispaced (top) and GLB (bottom) nodes}
	\label{fig: ODEIMEX}
\end{figure}
We recall that the plots of the stability regions have very different meaning according to the chosen approach. 
Starting from Minion's approach \cite{minion2003dec}, we evaluate the IMEX stability function \eqref{eq:ImExStabilityFunction} numerically to calculate the respective stability regions. 

For the IMEX DeC, we can observe in Figure~\ref{fig: ODEIMEX} (left) that the choice of nodes change the regions on some details but the qualitative behavior is the same. We can also conclude on an $A(\alpha)-$stability for approximately $\alpha=35^\circ$.

Going on to the IMEX ADER, we can see in Figure~\ref{fig: ODEIMEX} (right) a similar behavior, even if the stability regions differ in small details, we observe $A(\alpha)-$stability for at least $\alpha=35^\circ$.

Finally, for the IMEX sDeC method in Figure~\ref{fig: ODEIMEX} (center) we see a slightly different behavior, still resulting in an $A(\alpha)-$stability, but for significantly smaller angles, approximately $\alpha=18^\circ$. 
Nevertheless, the result on the bottom center in Figure~\ref{fig: ODEIMEX} with GLB nodes coincides with the one in \cite{minion2003dec}, as expected. 
It is also noticeable that the IMEX sDeC stability region of order 2 is $A(\alpha)$-stable with larger $\alpha$ as it coincides with the IMEX DeC2.

\subsubsection*{$\mathcal{D}_0$ stability region}

Now, we want to evaluate $\mathcal{D}_0$ stability for our IMEX methods. 
We want to emphasize that the requirements here are stricter than in Minion's approach. 
Indeed, for $\mathcal{D}_0$ we require the method to be at least fully A-stable for the implicit part and we look at the stability of the explicit part.
The IMEX DeC and IMEX sDeC have $\mathcal{D}_0=\emptyset$ and this is probably related to the fact that their implicit counterpart is not A-stable.
For the IMEX ADER, only few orders have non-empty $\mathcal{D}_0$ stability region. In Figure~\ref{fig: HundsdorferD0_IMEXADER}, we show the few stability regions, which eventually vanish when increasing the order of accuracy.
\begin{figure}
	\centering
	\begin{minipage}[t]{0.4\textwidth}
		\includegraphics[width=\textwidth]{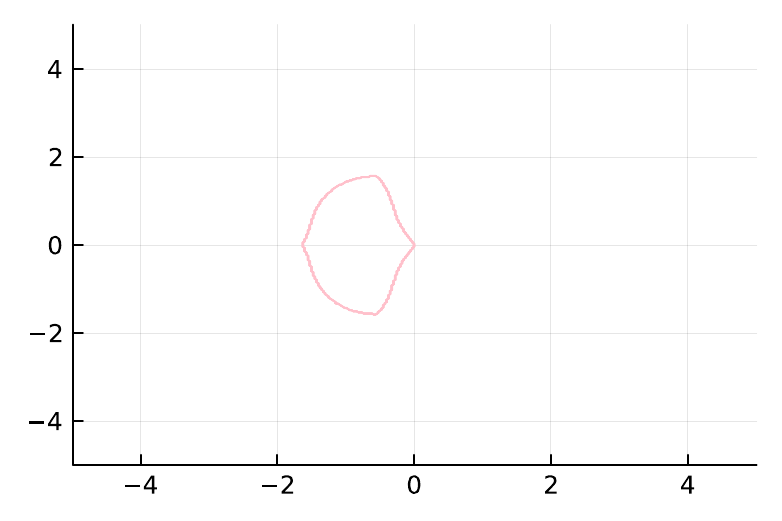}
		\centerline{Order 2 with equispaced nodes}
	\end{minipage}\,\,
	\begin{minipage}[t]{0.4\textwidth}
		\includegraphics[width=\textwidth]{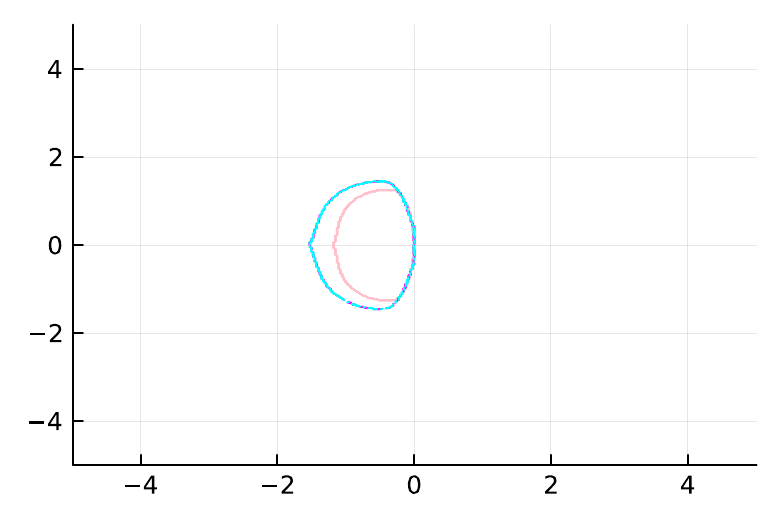}
		\centerline{Orders 2 to 4 with Gauss-Lobatto nodes}
	\end{minipage}
	\includegraphics[width=0.12\textwidth, trim={491 230 30 23}, clip]{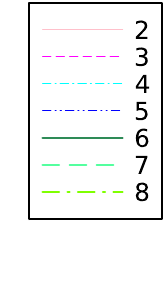}
	\caption{$\mathcal{D}_0$ Stability regions for IMEX ADER. The smaller stability region displays order 2, while the larger one displays order 3 and 4 (right)}
\label{fig: HundsdorferD0_IMEXADER}
\end{figure}

\subsubsection*{$\mathcal{D}_1$ stability region}
We plot the $\mathcal{D}_1$ stability regions for DeC and sDeC methods in Figures~\ref{fig: HundsdorferD1_IMEX}, where we require the explicit part to cover the stability region of the explicit Euler method and we look at the stability of the implicit part.
Contrary to the $\mathcal{D}_0$ cases, we observe non-empty, limited regions of stability for every order for the IMEX DeC methods. Moreover, there is no regularity in their shape and their size grow significantly as the order of accuracy increases.
Notice that the plots do not show the full stability regions of higher orders, for example for orders 6, 7, and 8 with equispaced nodes, but they are anyway bounded regions. \\
For the IMEX sDeC, see Figure~\ref{fig: HundsdorferD1_IMEX} (center), we observe some remarkable differences. In the case of equispaced nodes, even orders just show the small bounded stability regions in the negative half-plane nearby the origin, odd orders smaller than 6 show large stability regions, while they are unstable starting from order 7. 
Also in the GLB case, we do not observe much regularity. 
We notice that the largest stability region is obtained for order 5, while, for higher orders, the stability region almost fit in the plot.
\begin{figure}
	\centering
	\includegraphics[width=0.515\textwidth,trim={158 340 30 22}, clip]{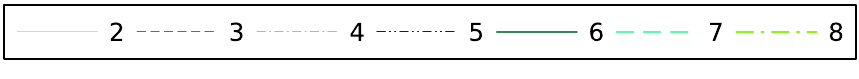}\\
	\begin{minipage}[t]{0.32\textwidth}
		\includegraphics[width=\textwidth]{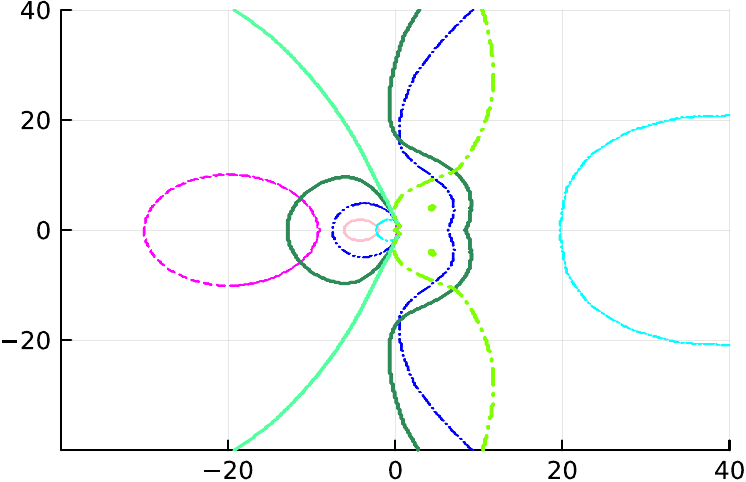}
		\centerline{IMEX DeC eq}
	\end{minipage}
	\begin{minipage}[t]{0.32\textwidth}
	\includegraphics[width=\textwidth]{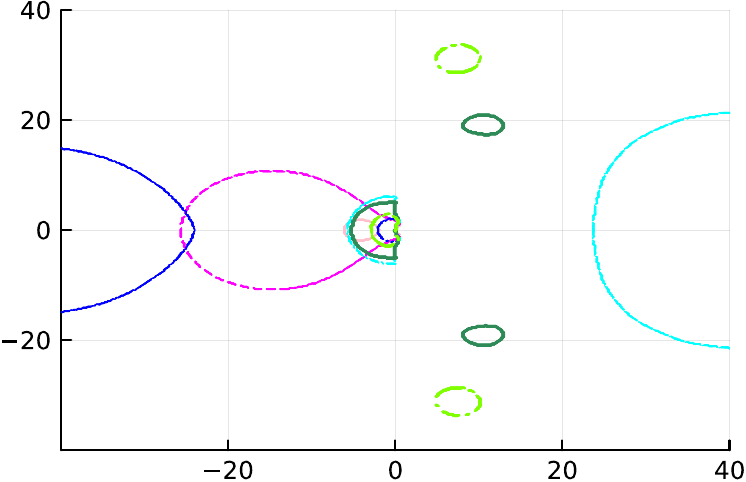}
	\centerline{IMEX sDeC eq}
	\end{minipage}
	\begin{minipage}[t]{0.32\textwidth}
	\includegraphics[width=\textwidth]{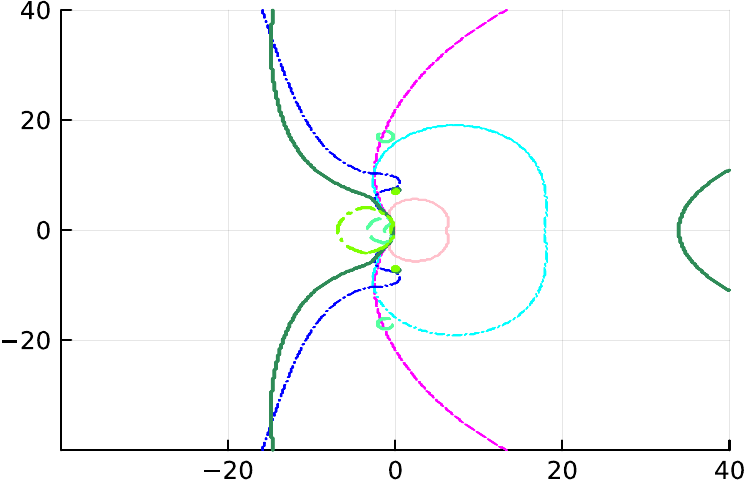}
	\centerline{IMEX ADER eq}
	\end{minipage}\\
	\begin{minipage}[t]{0.32\textwidth}
	\includegraphics[width=\textwidth]{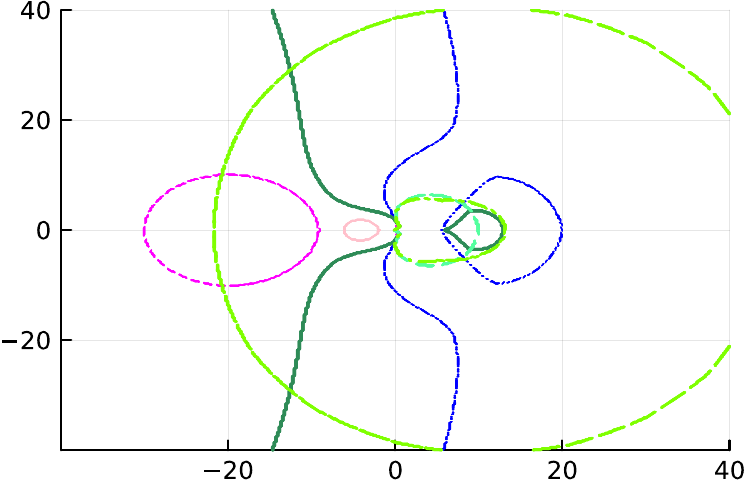}
	\centerline{IMEX DeC GLB}
	\end{minipage}
	\begin{minipage}[t]{0.32\textwidth}
	\includegraphics[width=\textwidth]{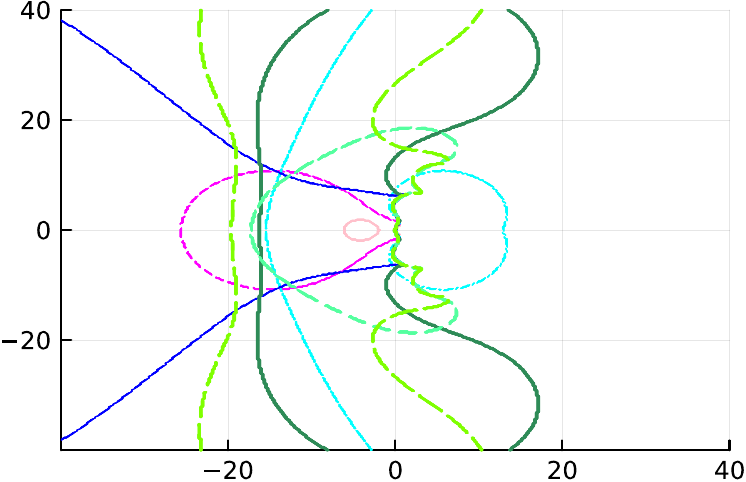}
	\centerline{IMEX sDeC GLB}
	\end{minipage}
	\begin{minipage}[t]{0.32\textwidth}
	\includegraphics[width=\textwidth]{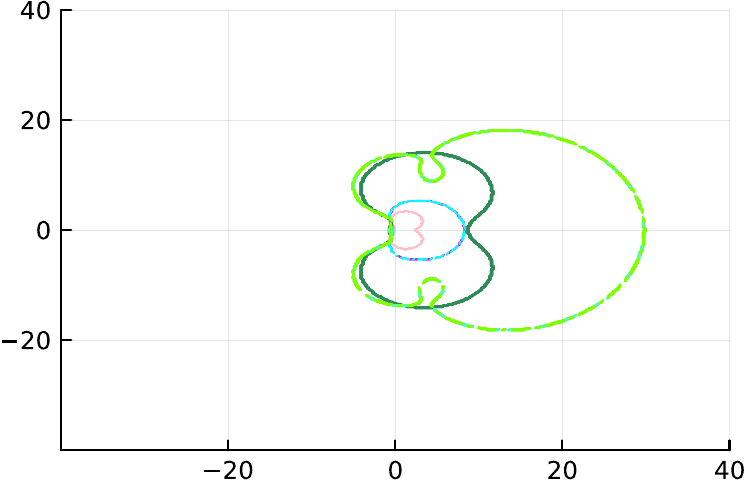}
	\centerline{IMEX ADER GLB}
	\end{minipage}
	\caption{$\mathcal{D}_1$ Stability Region for IMEX DeC (left), sDeC (center) and ADER (right) with equispaced (top) and GLB (bottom) nodes: orders 2 to 8}
\label{fig: HundsdorferD1_IMEX}
\end{figure}

In Figure~\ref{fig: HundsdorferD1_IMEX} (right), we show the results for the IMEX ADER methods. 
We note that most of the methods fulfill nearly A-stability by almost covering the negative half-plane. 
We highlight that for the equispaced case we do not show the full outreach of the stability regions. 
While in most of the cases, for the almost A-stable cases, these contour lines represent the inner bounds of unlimited stability regions, in the cases of order 5 and 8 we just have large, limited stability regions, as we also could observe for example in the $\mathcal{D}_1$ IMEX DeC case for equispaced nodes. 
Therefore, it seems like we can not guarantee this almost A-stability for the IMEX ADER, but just for some of the orders of accuracy.

Therefore, we can conclude, that some of the IMEX ADER stability regions cover the areas in the complex plane, that we assumed for a stable method in the context of $\mathcal{D}_0$ and $\mathcal{D}_1$, while the DeC and sDeC methods have their limitations with $\mathcal{D}_0= \emptyset$ in every case and bounded stability regions for most of the cases in the scope of $\mathcal{D}_1$. Nevertheless, we need to keep in mind that the set conditions are very strict, so,  the methods might still be applicable to some stiff equations. 
These results reflect what we have seen for the respective implicit methods.

	\section{PDE: analysis of advection-diffusion}
	\label{sec: advection_diffusion}
	In this section, we want to extend our stability analysis to the one-dimensional advection-diffusion equation
\begin{equation}\label{eq: A-D_equation}
u_t(x,t) + au_x(x,t) = du_{xx}(x,t), \quad a\ge0, \ d \ge 0, \qquad x \in \Omega \subset \mathbb R,
\end{equation}
where $a$ is the coefficient of the advection term and $d$ the coefficient of the diffusion term, using the von  Neumann stability analysis. 
After the space discretization, we discretize the advection part with an explicit time-integration scheme and the diffusion with an implicit one.
The linear stability of the DeC method in PDE contexts was studied for explicit methods for advection equations with FEM spatial discretizations and various stabilization techniques in \cite{michel2021spectral,michel2023spectral}, while in the IMEX context for FEM methods applied to kinetic models in \cite{torlo2020hyperbolic}.
For the ADER method, a von Neumann stability analysis was applied to the original formulation \cite{titarev2007analysis,dematte2020ader}, but not on the modern version that we are studying. We close this gap with our investigation in the following.
\subsection{Finite Difference discretization}
\label{sec: spatial_discretization}
We apply spatial discretizations to the spatial derivative operators, namely $\partial_x$ and $\partial_{xx}$. 
We consider a uniformed grid $\Omega_{\Delta x}=\left\{x_j \ : \ x_j = x_0 + j\Delta x, \ j \in \{0,\hdots , J\}\right\}$ with periodic boundary conditions and we denote the approximation of $u(x_j)=u(x_j,t)$ by $w_j$.

To discretize the advection term in \eqref{eq: A-D_equation}, i.e., the first spatial derivative $\partial_xu(x)$, we make usage of the stable finite difference stencils introduced in \cite{Iserles1982}. Assume we discretize $\partial_xu$ at $x_j$ by an $[r,s]$-discretization
\begin{equation}\label{eq: r-s_scheme}
\partial^{[r,s]}_{\Delta x}(u(x_j)) = \frac{1}{\Delta x} \sum\limits_{k=-r}^s \alpha_k w_{j+k},
\end{equation}
with $r,\ s$ such that $\alpha_{j-r}, \alpha_{j+s} \neq 0$.
The maximum order we can achieve with an $[r,s]$-discretization is $q=r+s$ and this discretization actually reaches order $q$ and is unique by setting the coefficients in \eqref{eq: r-s_scheme} as
	\begin{align}\label{eq:def_advection_optimal_stencil}
	\alpha_0&=
	\begin{cases}
	\sum\limits_{k=r+1}^s\frac{1}{k}, & s\ge r+1, \\
	0, & s=r, \\
	\sum\limits_{k=s+1}^r\frac{1}{k}, & r\ge s+1, 
	\end{cases} \qquad
	\alpha_k = \frac{(-1)^{k+1}}{k}\cdot \frac{r!s!}{(r+k)!(s-k)!}, \quad -r\le k \le s, \ k\neq0.
	\end{align}
 It is also proven in \cite{Iserles1982} that these so-called $\textit{optimal-order}$ schemes of order $q$ are stable if and only if $s\le r \le s+2$ for $a>0$.
We  involve these stable $\textit{optimal-order}$ schemes into our analysis. 
We introduce upwinding in the choice of the stencils, in particular, we will consider $[r, r +1]$ stencils for odd optimal-order scheme and $[r, r+2]$ stencils for an even optimal-order scheme for the advection part.

For the diffusion term, we will just use a central finite difference discretization of the second spatial derivative $\partial_{xx} u(x)$ given in Table~\ref{tab: CFD-schemes_first_deriv}.
\begin{table}
	\centering
	\caption{Central finite difference discretizations of $\partial_{xx}$ applied onto $w$ centered in $j$  \cite{fornberg_finite_difference}}
	\label{tab: CFD-schemes_first_deriv}
	\small
	\begin{tabular}[h]{|c|c|}
		\hline
		order & finite difference for $\partial_{\Delta x}^2(u(x_j))$\\
		\hline
		2 & $\frac{1}{\Delta x^2}\left(w_{j-1}-2w_j+w_{j+1}\right)$\\
		\hline
		4 & $\frac{1}{\Delta x^2}\left( -\frac{1}{12}w_{j-2} +\frac{4}{3}w_{j-1} -\frac{5}{2}w_j   +\frac{4}{3}w_{j+1}  -\frac{1}{12}w_{j+2}\right)$\\
		\hline
		6 & $\frac{1}{\Delta x^2}\left(\frac{1}{90}w_{j-3} - \frac{3}{20}w_{j-2} + \frac{3}{2}w_{j} - \frac{49}{18}w_{j} + \frac{3}{2}w_{j} - \frac{3}{20}w_{j+2} + \frac{1}{90}w_{j+3}\right)$\\
		\hline
		8 & $\frac{1}{\Delta x^2}\left(
		-\frac{1}{56}w_{j-4} + \frac{1}{420}w_{j-3} - \frac{1}{5}w_{j-2} + \frac{8}{5}w_{j-1} - \frac{205}{72}w_{j} + \frac{8}{5}w_{j+1} - \frac{1}{5}w_{j+2} + \frac{1}{420}w_{j+3} - \frac{1}{56}w_{j+4}
		\right)$\\
		\hline
	\end{tabular}
\end{table}

\subsection{von Neumann analysis}
\label{sec: stability_theory_PDE}
To analyze the stability of the described methods, we make use of the von Neumann stability analysis for linear partial differential equations \cite{leveque2007finite}.
Briefly summarized, we investigate the behavior inside the numerical scheme of the Fourier modes
\begin{equation}\label{eq: fourier_modes}
w^n_j = v^ne^{ikx_j},
\end{equation}
where $w^n_j$ is the discretization of $u(x_j, t_n)$ and $k$ is the wavenumber and we focus on the representation coefficient $v^n$. 
Indeed, $e^{ikx}$ are eigenfunctions of the differential operator $\partial_x$ and therefore for any linear differential operator. 
If we use \eqref{eq: fourier_modes} in our discretized system, we obtain a system of the form
\begin{equation}
\label{eq: ampfactor}
v^{n+1}=G(k, \Delta x, \Delta t, a, d)v^n.
\end{equation}
with the amplification factor $G\in \mathbb{C}$ independent on the mesh point $x_j$.
Stability means in our context that $\lvert{v^{n+1}}\rvert= \lvert{G(k, \Delta x, \Delta t, a, d)v^n}\rvert \leq \lvert v^n\rvert$ holds.
In practice, we check that $\lvert G (k, \Delta x, \Delta t, a, d) \rvert \le 1$. This implies stability for the related method and due to the  Lax-Richtmeyer theorem \cite{leveque2007finite} convergence can be ensured.
Note that for consistency, we included the parameters $a$ and $d$ into the dependency of $G$ to cover all advection-diffusion equations. \\
Typically, in order to estimate the stability of the advection-diffusion equation, an analytical study of $G$ in all the parameters should be performed. 
This is not feasible when considering high order schemes as ADER and DeC.
Hence, we will evaluate the amplification factor numerically, similarly to what we did with the stability functions in the ODE case.

Before running all the simulations, we need to understand what are the free variables of the function $G$. First of all, the wavenumbers  should be bounded $k \in \lbrace -n_0-1 , \dots, n_0+1 \rbrace \subset \mathbb Z$ and the maximum wavenumber $n_0+1$ is strongly related with the discretization scale $\Delta x$. Indeed, by  Nyquist–Shannon sampling theorem, only functions with frequency less than $\frac{|x_J-x_0|}{2\Delta x}$ can be represented on our discretization. Hence, we will choose $n_0=10^3$ to take in consideration fine grids.

Then, we have further variables $a,d,\Delta x,\Delta t $ that are actually coupled together: in the advection term ($a\Delta t/\Delta x$) and in the diffusion term ($d\Delta t/\Delta x^2$). We keep this in mind when studying the behavior of $G$, as we can recast few methods to the same coefficients.

\subsubsection{Displaying stability}
It is stated and numerically shown in \cite{TanChenShu_ImEx_Stability, WangShuZhang_LDG1_2015,WangShuZhang_LDG_2016} that several schemes as the local discontinuous Galerkin scheme \cite{WangShuZhang_LDG1_2015,WangShuZhang_LDG_2016} and other finite difference schemes combined with an IMEX RK method are stable if the time step is upper bounded by some $\tau_0$. This $\tau_0$ is proportional to $\frac{d}{a^2}$, i.e., if $\Delta t\le\tau_0= E_0\cdot \frac{d}{a^2}$ for some $E_0>0$.
Considering the before mentioned parameters $\Delta x, \Delta t, a,d$, we introduce two new coefficients
\begin{equation}
C=\frac{a\Delta t}{\Delta x}, \quad D=\frac{d\Delta t}{{(\Delta x)}^2}.
\end{equation}

Moreover, using the coefficients $C$ and $D$ reveals an equivalent condition to \cite{TanChenShu_ImEx_Stability}, if we assume that the quotient
\begin{equation*}
E:=\frac{C^2}{D}= \frac{ \Delta t ^2 a^2 }{\Delta x^2} \frac{\Delta x^2}{d \Delta t} = \frac{a^2}{d}\Delta t
\end{equation*}
is bounded by some constant $E_0$, indeed,
\begin{equation*}
E= \frac{a^2}{d}\Delta t\le E_0 \quad \Longleftrightarrow \quad \Delta t \le E_0 \cdot \frac{d}{a^2}=\tau_0.
\end{equation*}
Therefore, for a given method solving the advection-diffusion equation, we can rewrite the amplification factor  in \eqref{eq: ampfactor} as
\begin{equation}
	g(k,C,E)=G(k,\Delta x, \Delta t, a,d).
\end{equation}

\begin{definition}[Scheme notation]	\label{defi: notation_pde_method}
	To shorten the notation, we denote the considered method for the advection-diffusion equation by $[\TMM,\NODES,N, A_n,D_m],$ where
	\begin{itemize}
		\item $\TMM$ stands for the respective IMEX time-marching method, among $\DeC$, $\ADER$, $\sDeC$,
		\item $\NODES$ stands for the used quadrature nodes for the \TMM, among {\eq} or $\GLB$,
		\item $N$ stands for the order of the considered time-marching method $\TMM$,
		\item $A_n$ denotes the optimal first derivative stencil of order $n$ defined in \eqref{eq:def_advection_optimal_stencil} used for the advection term,
		\item $D_m$ denotes the central second derivative stencil of order $m$ in Table~\ref{tab: CFD-schemes_first_deriv} used for the diffusion term.
	\end{itemize}
\end{definition}
\begin{example}[Stability of IMEX DeC3 with $A_1$ and $D_2$ operators]
	\begin{figure}[!h]
		\centering
		\begin{minipage}[t]{0.42\textwidth}
			\includegraphics[width=\textwidth]{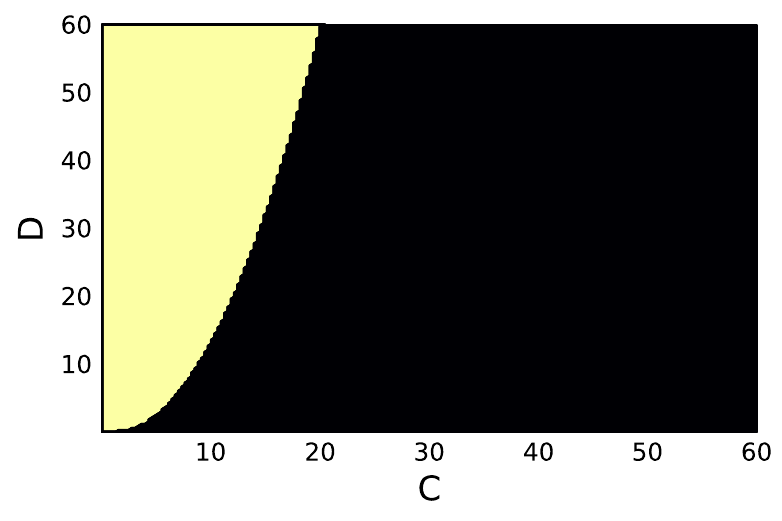}
			\centering
			Coefficients $C$ and $D$
		\end{minipage} 
		\begin{minipage}[t]{0.42\textwidth}
			\includegraphics[width=\textwidth]{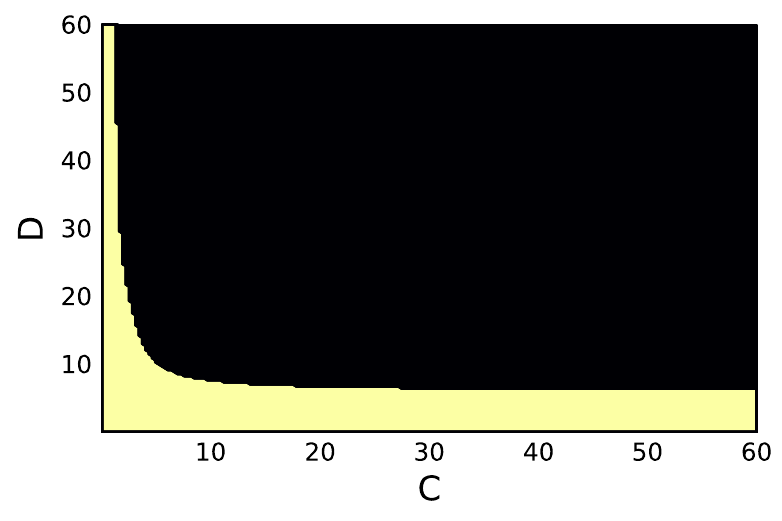}
			\centering
			Coefficients $C$ and $E$
		\end{minipage}
		\caption{Stability areas (yellow) for the $[DeC,eq,3, A_1,D_2]$.}
		\label{fig: exa_ImExDeC3_diff2_adv1}
	\end{figure}
	To give an example of how to study the stability region, we show in Figure~\ref{fig: exa_ImExDeC3_diff2_adv1} the stability areas $\lbrace |g(k,C,E)|\leq 1 ,\, \forall k \in [-n_0-1,n_0+1]\rbrace$ for the $[\DeC,\eq,3, A_1,D_2]$. On the left, we plot the stability area as a function of $C$ and $D$, while on the right as a function of $C$ and $E$. The black area is associated to the unstable area, while the yellow displays the stable region.
	We recognize two sufficient conditions to obtain stability:
	\begin{itemize}
		\item the well known CFL-condition, i.e., if $C$ is lower than some constant $C_0$ only dependent on the method, then the method is stable.
		\item the new numerically obtained condition, designated as the $E_0$-condition: If $E$ is lower than some constant $E_0$ dependent on the method, then the method is stable.
	\end{itemize}	
	The two parameters $C$ and $E$ include all the remaining ones and are enough to characterize the whole scheme.
\end{example}

As we can see in Figure~\ref{fig: exa_ImExDeC3_diff2_adv1}, the unstable area of this specific method seems to be bounded by the linear constraints $C\geq C_0$ and $E\geq E_0$. We will observe numerically that these unstable regions are indeed similarly bounded in most of our methods.
\begin{definition}[Stability parameters $C_0$ and $E_0$]
	Given the amplification factor $g(k,C,E)$ of a discretization of the advection-diffusion equation, we define the two stability parameters $C_0$ and $E_0$ by
	\begin{itemize}
		\item $C_0:=\max_{C\in \mathcal{S}} C$ with $\mathcal{S}=\lbrace C: \lvert g(k,C,E)\rvert \leq 1, \, \forall E>0, \,\forall k \in [-n_0-1,n_0+1] \rbrace$,
		\item $E_0:=\max_{E\in \mathcal{R}}E$ with $\mathcal{R}= \lbrace E: \lvert g(k,C,E)\rvert \leq 1,\, \forall C>0,\,\forall k \in [-n_0-1,n_0+1]\rbrace .$
	\end{itemize}
\end{definition}
Therefore, the strategy we want to follow is to look at the areas of stability by evaluating the amplification factor $\max_k |g(k,C,E)|$ like in Figure~\ref{fig: exa_ImExDeC3_diff2_adv1} and numerically calculating the parameters $C_0$ and $E_0$ for our methods, when possible.

Note, that the condition $E<E_0$ does not depend on $\Delta x$ and avoids CFL restrictions. We should always keep in mind that our numerical evaluations can just cover finite ranges of $C$ and $E$. 
Hence, we checked that the displayed limits for $E$ and $C$ are actually bounds also for larger domains of $C$ and $E$.
Moreover, we also observed that the considered results do not vary much for large values of $n_0$, hence, we set $n_0=10^3$. Due time-efficiency, we will use this value for every evaluation in the von Neumann stability analysis context for the rest of this work. Further, all plots in this section will be evaluated and displayed at $400\times 400$ grid points.
\subsubsection{Numerical analysis}
In the following plots, we will study various configurations of methods and the variation of the stability region changing the order of the methods. In particular, we will focus on changing the order of the time discretization only and varying the order of all operators. In the repository \cite{ourrepo}, we include other variations of the order of only the advection or the diffusion operator that we discard here for brevity. Moreover, we will focus on Gauss--Lobatto nodes for the time discretization only mentioning the peculiarity of the equispaced ones that can be found, again, in the repository \cite{ourrepo}.

We start by varying only the time scheme order. 
In Figure~\ref{fig: grp_adv1_diff2}, the stability areas of several methods are shown, i.e., $[\TMM,\NODES,k, A_1,D_2]$ varying the time scheme and the nodes. 
As in Figure~\ref{fig: exa_ImExDeC3_diff2_adv1}, the plotted lines separates the stable region in the lower left part from the unstable region in the upper right side of the $C$-$E$ plane. 
We see a similar behavior for mostly all methods. 
Increasing the order of the time marching method results in a larger stability region and in larger values for $E_0$ and $C_0$. 

\begin{figure}
	\centering
	\includegraphics[width=0.6\textwidth,trim={160 340 30 22}, clip]{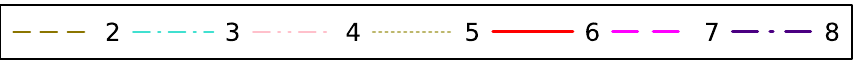}\\		\begin{minipage}[t]{0.32\textwidth}
		\includegraphics[width=\textwidth]{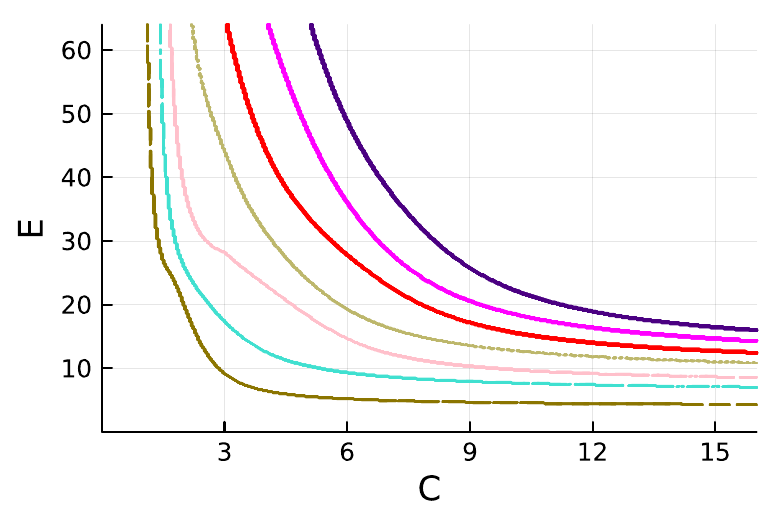}
		\centering
		$[\DeC,\GLB,k, A_1,D_2]$
	\end{minipage} 
	\begin{minipage}[t]{0.32\textwidth}
		\includegraphics[width=\textwidth]{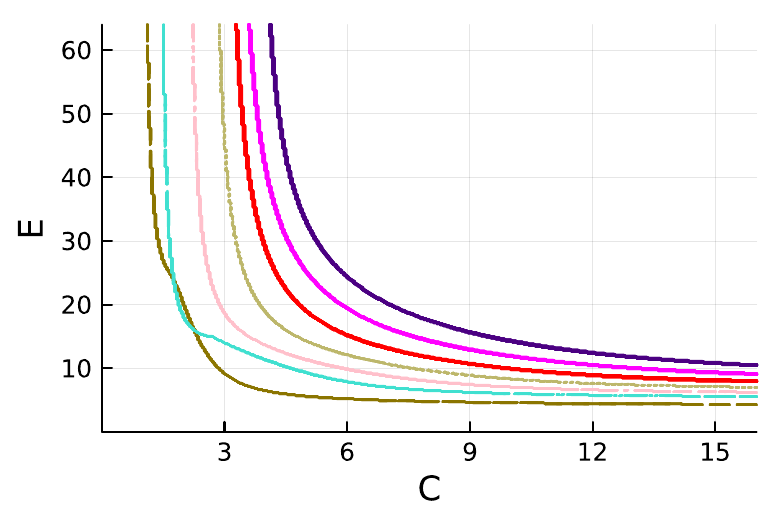}
		\centering
		$[\sDeC,\GLB,k, A_1,D_2]$
	\end{minipage}
	\begin{minipage}[t]{0.32\textwidth}
		\includegraphics[width=\textwidth]{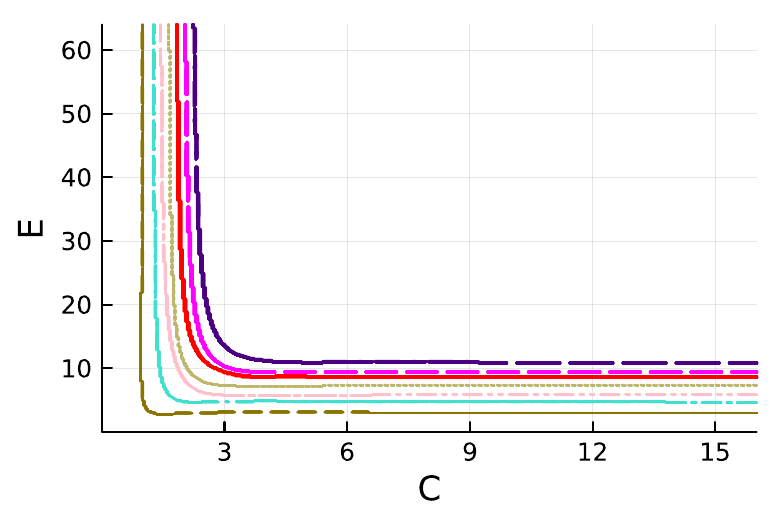}
		\centering
		$[\ADER,\GLB,k, A_1,D_2]$
	\end{minipage} 
	\caption{Stability areas for $[\TMM,\GLB,k, A_1,D_2]$ for $\TMM$ as IMEX DeC (left), IMEX sDeC (center) and IMEX ADER (right) with eq (top) and GLB (bottom) nodes: $\TMM$ order $k$ from 2 to 8}
	\label{fig: grp_adv1_diff2}
\end{figure}

In the equispaced case we do not observe major differences for the DeC and sDeC methods, 
while, for the ADER cases, the usage of equispaced nodes results in an irregular reduction to $C_0=0$ for some orders (7 and 8), meaning that we can not ensure stability as we did in the other cases for high order methods, see Figure~\ref{fig: ADER_eq_adv_diff_stab} (left). 
This is probably due to the numerical cancellation of Newton-Cotes quadratures with more than 8 points that occur for orders larger than $6$, used in the considered IMEX ADER method. 

\begin{figure}
	\centering
	\includegraphics[width=0.6\textwidth,trim={160 340 30 22}, clip]{pdf/pdepics/legends/colors_a-d_new_horiz_2-8_no_order.pdf}\\
	\begin{minipage}[t]{0.325\textwidth}
		\includegraphics[width=\textwidth]{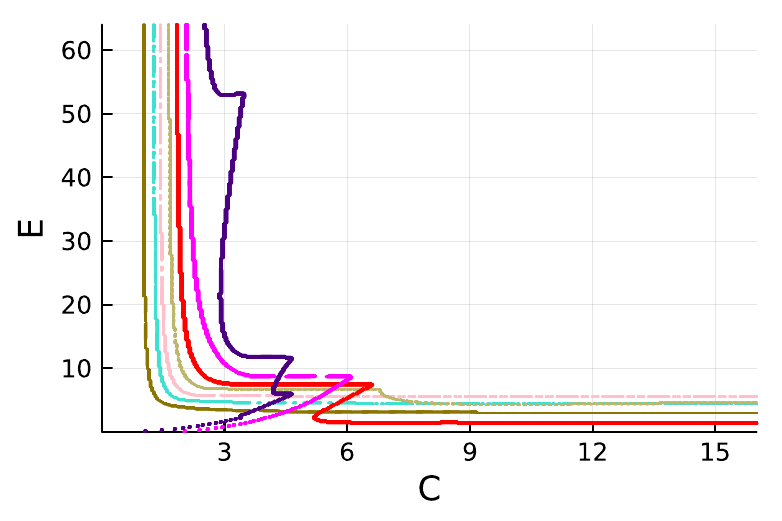}
		\centering
		$[\ADER,\eq, k, A_1,D_2]$
	\end{minipage}
	\begin{minipage}[t]{0.325\textwidth}
		\includegraphics[width=\textwidth]{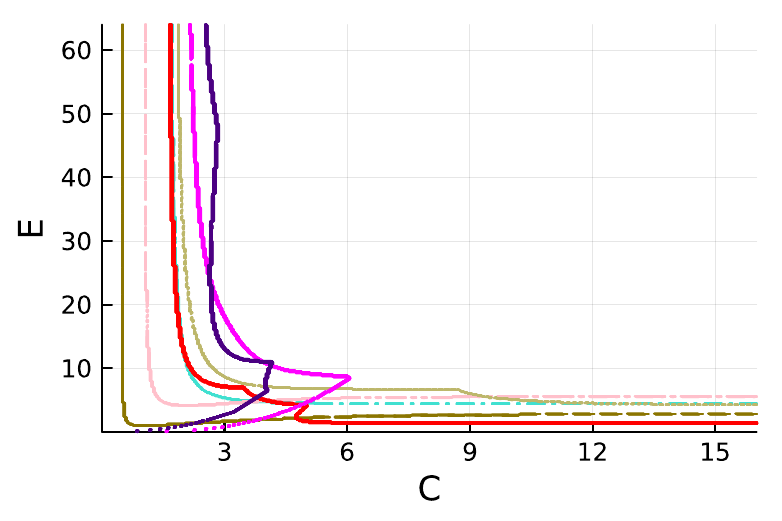}
		\centering
		$[\ADER,\eq, k, A_k,D_{2\lceil k/2 \rceil}]$
	\end{minipage}
	\begin{minipage}[t]{0.325\textwidth}
		\includegraphics[width=\textwidth]{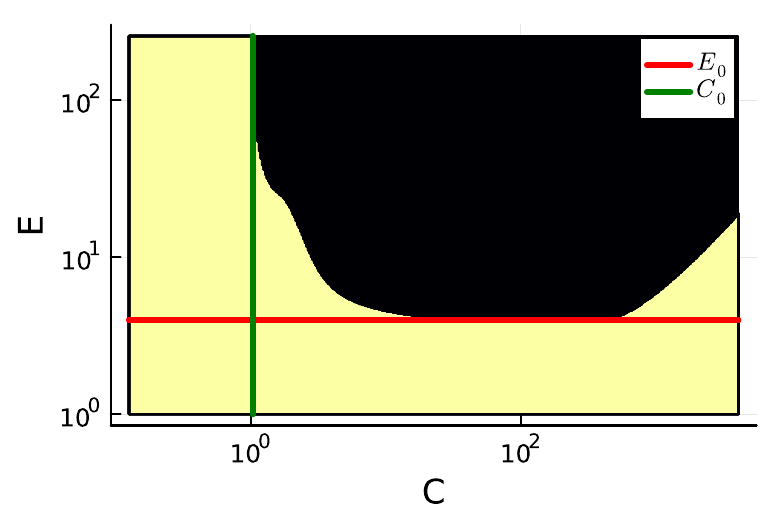}
		\centering
		$[\sDeC,\GLB, 2, A_1,D_2]$
	\end{minipage}
	\caption{Stability areas of ADER with equispaced nodes (left and center): varying the order of the time method (left) and varying the order of all discretizations (center). Bounds by $C_0$ and $E_0$ of the stability region (in yellow) for an sDeC method (right).}
	\label{fig: ADER_eq_adv_diff_stab}
\end{figure}

In Figure~\ref{fig: exa_difftermsim}, we study the space--time discretizations matching the orders of the time scheme with the order of the spatial terms, i.e.  $[\TMM,\GLB, k, A_k,D_{2\lceil k/2 \rceil}]$, where $\TMM$ is one of the 3 considered methods and $k \in \{2, \hdots, 8\}$.
Here, we can observe for all cases that the higher order terms results in slightly larger stability areas and also in bigger $C_0$ and $E_0$, which leads to the behavior we have already seen in Figure~\ref{fig: grp_adv1_diff2} varying only the order of the time scheme. Again, for high order IMEX ADER with equispaced nodes we lose the $E_0$ bound from below as shown in Figure~\ref{fig: ADER_eq_adv_diff_stab}.

\begin{figure}
	\centering
	\includegraphics[width=0.6\textwidth,trim={160 340 30 22}, clip]{pdf/pdepics/legends/colors_a-d_new_horiz_2-8_no_order.pdf}\\
	\begin{minipage}[t]{0.32\textwidth}
		\includegraphics[width=\textwidth]{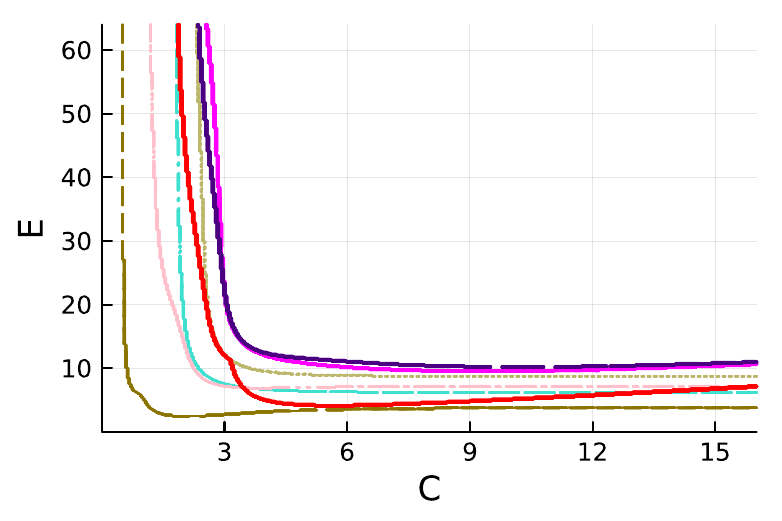}
		\centering
		$[\DeC,\GLB, k, A_k,D_{2\lceil k/2 \rceil}]$
	\end{minipage} 
	\begin{minipage}[t]{0.32\textwidth}
		\includegraphics[width=\textwidth]{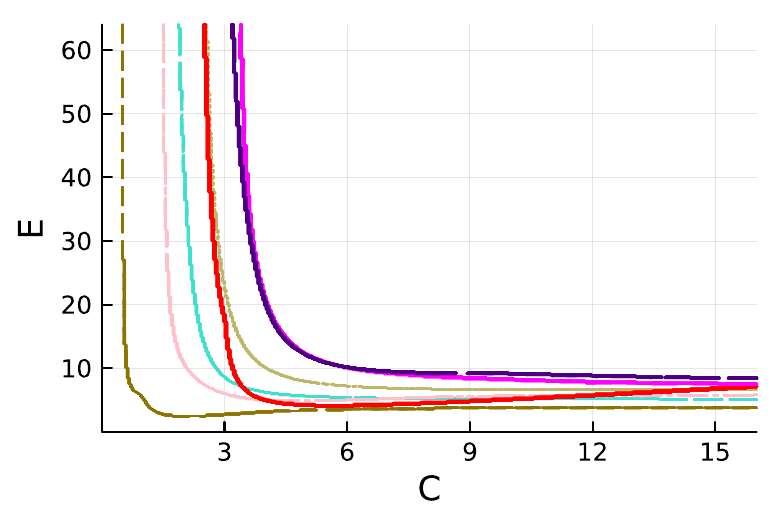}
		\centering
		$[\sDeC,\GLB, k, A_k,D_{2\lceil k/2 \rceil}]$
	\end{minipage}
	\begin{minipage}[t]{0.32\textwidth}
		\includegraphics[width=\textwidth]{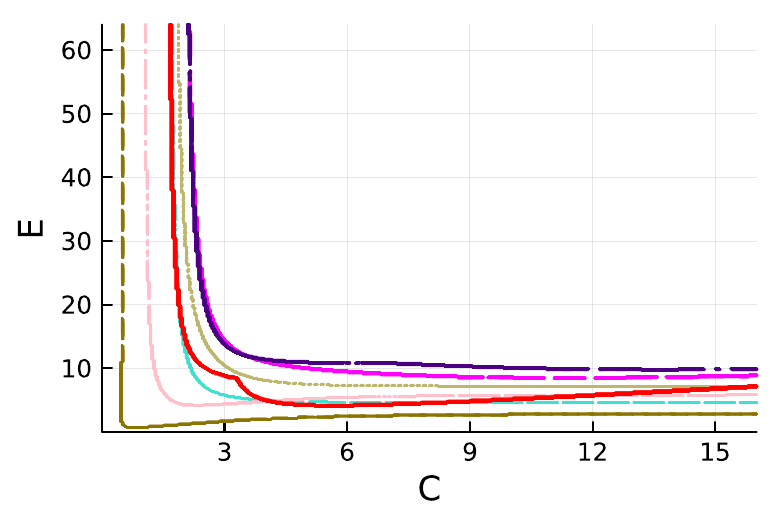}
		\centering
		$[\ADER,\GLB, k, A_k,D_{2\lceil k/2 \rceil}]$
	\end{minipage}
	\caption{Stability areas varying the order of all partial methods.}
	\label{fig: exa_difftermsim}
\end{figure}

We can conclude that increasing the order of the time-marching method results in higher values for both $C_0$ and $E_0$. However, the considered higher order finite differences for the spatial discretizations do not grant significant improvements. 
A special mention to high order IMEX ADER with equispaced nodes is necessary also here. The border $E_0$ vanishes, indeed, for order $>7$, see Figure~\ref{fig: ADER_eq_adv_diff_stab} (center). Moreover, also varying the spatial discretization we obtain the same results. 
We remark that there are plenty of more options to test, which we do not include in this analysis, some of which can be found in the repository \cite{ourrepo}. Nevertheless, we presented all remarkable combinations of methods we tested and a summary of their results.

\begin{table}
	\centering
	\caption{Approximated border values $C_0$ (up to 2 decimals) and $E_0$ (up to 1 decimal) for Gauss--Lobatto methods with operators with optimal order $k$}\label{tab:CE_values}
	\begin{tabular}{|c||c|c||c|c||c|c|}\hline
				$k$&
		\multicolumn{2}{|c||}{$[\DeC,\GLB,k,A_k,D_{2\lceil k/2 \rceil}]$}&
		\multicolumn{2}{|c||}{$[\sDeC,\GLB,k,A_k,D_{2\lceil k/2 \rceil}]$}&
		\multicolumn{2}{|c|}{$[\ADER,\GLB,k,A_k,D_{2\lceil k/2 \rceil}]$}\\\hline
		&$\qquad C_0\qquad$&$E_0$&$\qquad C_0\qquad$&$E_0$&$\qquad C_0\qquad$&$E_0$\\\hline
		2& 0.50 &2.5&0.50&2.5&0.50&0.7\\
		3& 1.63 &6.1&1.69&5.1&1.63&4.5\\
		4& 1.04&6.9&1.43&4.9&1.04&4.2\\
		5& 1.74&8.8&2.31&6.6&1.74&7.2\\
		6& 1.60&4.1&2.33&4.2&1.60&4.1\\
		7& 1.94&9.5&3.12&7.5&1.94&8.5\\
		8&2.00&10.2&2.85&5.9&2.00&9.8\\ \hline
	\end{tabular}
\end{table}

As an example, we plot the stability region of $[\sDeC,\GLB,2, A_1,D_2]$ in Figure~\ref{fig: ADER_eq_adv_diff_stab}. We observe an asymptotic behavior both for $E\rightarrow \infty$ against a line $C=C_0$ and also some sort of border line $E<E_0$ for large values of $C$, which ensures stability for an arbitrary $C$, if $E\le E_0$. These are the desired values for $C_0$ and $E_0$.
In Table~\ref{tab:CE_values}, we study the operators with order $k$ matching for the time and spatial discretization for time schemes defined by GLB nodes. We display in that table the maximal values $C_0$ and $E_0$. 
We clearly see that they increase as the order increases. 
The only value that is not uniform among the methods is $E_0$ for ADER GLB of order 2, for which we have a restrictive bound. 
We also want to highlight, that $C_0$ matches inbetween the DeC and ADER methods for the same orders. This is probably due to their coinciding explicit stability regions for ODEs, as pointed out in \cite{Han_Veiga_2021}. 
	
	\section{PDE: analysis of advection-dispersion}
	\label{sec:PDE_adv_disp}
	In this section, we extend the analysis and results to observe the behavior of IMEX ADER and DeC methods onto the advection-dispersion equation
\begin{equation}
\label{eq: A-Disp_equation}
u_t(x,t) + au_x(x,t) + \beta u_{xxx}(x,t) = 0, \quad a\ge0, \ \beta \ge 0.
\end{equation}
\subsection{FD discretization}
\label{sec: disp_spatial_discretization}
First, we  introduce at this point the considered spatial discretizations for the advection-dispersion equation. Thereby, we consider the same discretization for the advection term, as introduced in Section~\ref{sec: spatial_discretization}.\\
For the dispersion term, we will consider the upwind scheme used in \cite{TanChenShu_ImEx_Stability} to test stability for the advection-dispersion equation \eqref{eq: A-Disp_equation}. It is of order 3 and given by
\begin{equation}
	\label{eq: shu_upwind_dispersion}
	\partial_{\Delta x}^3(u(x_j)) =\frac{1}{4{\Delta x}^3}\left( -w_{j-2} - w_{j-1}  + 10w_{j} - 14w_{j+1} + 7w_{j+2} - w_{j+3}\right).
\end{equation}
For higher orders, we have used the optimal $2r+1$ order formula on stencils of the type $[-r,r+1]$ with the tool provided in \cite{fdcc}.

We have also tested the methods with a central finite difference formula of order 2, always leading to less stable methods, hence, we will not include them in the following discussion.


\subsection{von Neumann analysis}
As previously done for the advection--diffusion problem, we will perform the von Neumann analysis by looking at the coefficients of the finite difference schemes, i.e.,
$$C=a\frac{\Delta t}{\Delta x},\qquad P= \beta\frac{\Delta t}{{\Delta x}^3}.$$
The procedure is analogous to the advection--diffusion one, with $C,\,P$ instead of $C,\,E$.
\subsubsection{Displaying stability}
To denote the considered methods, we use again the notation introduced for the advection-diffusion equation
	\begin{equation*}
[\TMM,\NODES,N, A_n,B_m],
\end{equation*}
where $B_m$ refers to the upwind $m$-th order stencils of type $[-r,r+1]$.
We proceed evaluating the amplification factor 
\begin{equation*}
G(k,\Delta x, \Delta t, a, \beta)=g(k,C,P)
\end{equation*}
to observe the stability region as a function of $C$ and $P$. 
In opposition to the advection--diffusion case, in \cite{TanChenShu_ImEx_Stability} only a CFL condition is found, even if, numerically, they observe larger stability regions with a little of dispersion.
We want to give a more comprehensive study of this behavior for different schemes and, as before, we look for meaningful coefficients that bounded by some constants give the stability. 
To find such coefficients, we proceed with an example.
\begin{example}\label{exa: disp_displaying_stability_upwind}
	In Figure~\ref{fig: disp_IIMEXDeC2/3_GLB_CvsE_upwind}, we display the stability areas for the $[\DeC, \GLB,2,A_1,B_3]$ and $[\DeC, \GLB,3, A_1,B_3]$ on the $(C,P)$ plane (left).
	In the IMEXDeC2 case, we note that for low $P$ a CFL constraint $C\leq 1$ guarantees stability, while for large $P$ we see a linear constraint of the type $P\gtrsim E_0 C $.
	In the IMEXDeC3 case, there is a further unstable region close to the $C=0$ axis. This extra unstable region is due to the fact that IMDeC3 is not A-stable and, hence, for low values of $C$ not enough numerical dissipation is brought to the system.
	
	Anyway, the linear constraint on the large $P$ motivates the following definition of 
	\begin{equation*}
		E_P:=\frac{C}{P}=\frac{\Delta ta}{\Delta x}\frac{\Delta x^3}{\beta\Delta t}=\frac{a \Delta x^2  }{\beta }.
	\end{equation*}
	Now, looking at the right plot for IMEXDeC2 in Figure~\ref{fig: disp_IIMEXDeC2/3_GLB_CvsE_upwind}, we observe that either $C\leq 1$ or $E_P\leq E_0 \approx 10^{-4}$ guarantee stability. 
	This is a peculiar result as $E_P=\frac{a \Delta x^2 }{\beta  }$ does not depend on the time discretization.
	The same does not hold for IMEXDeC3, where this area is stable only for large values of $C$, which leads to ridiculously small $\Delta x$ and large $\Delta t$.
	\begin{figure}
		\centering
		\begin{minipage}[t]{0.32\textwidth}
			\centering
			\includegraphics[width=\textwidth]{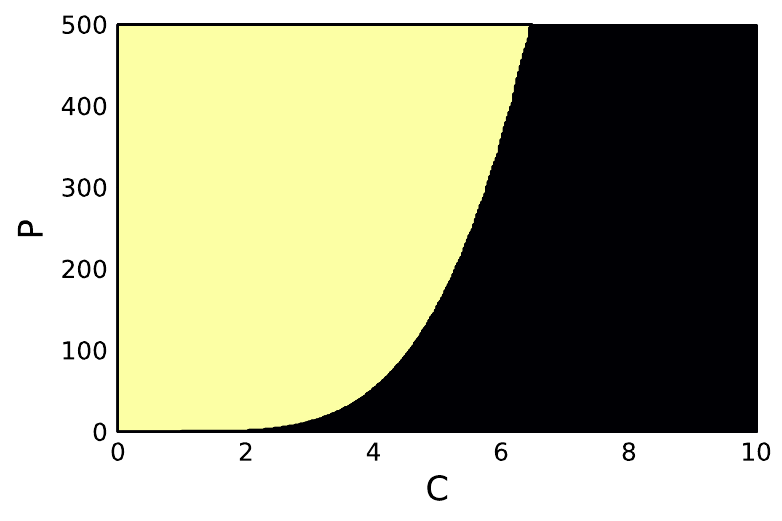}
			IMEX \DeC2 ($C$ vs $P$)
		\end{minipage}
		\begin{minipage}[t]{0.32\textwidth}
			\centering
			\includegraphics[width=\textwidth]{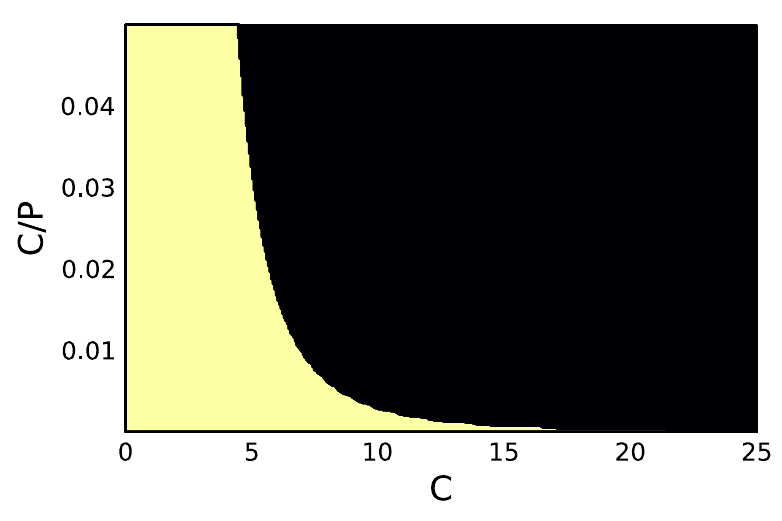}
			IMEX \DeC2 ($C$ vs $E_P$)
		\end{minipage}
		\begin{minipage}[t]{0.32\textwidth}
			\centering
			\includegraphics[width=\textwidth]{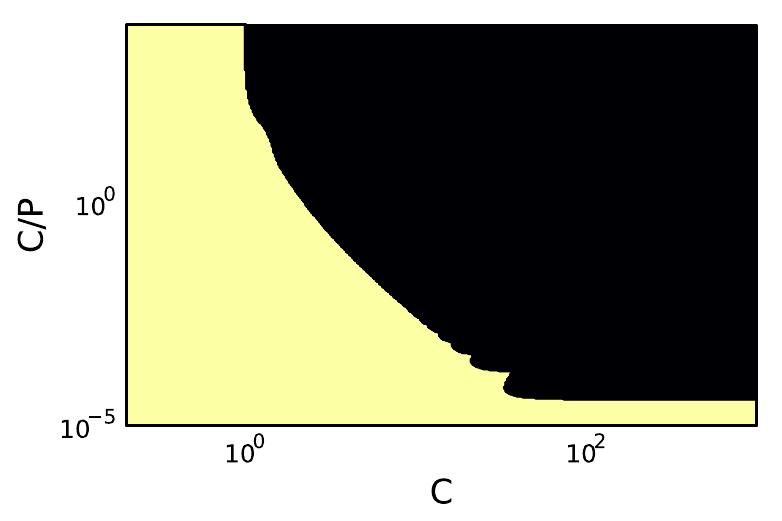}
			IMEX \DeC2 ($C$ vs $E_P$, log)
		\end{minipage}\\
		\begin{minipage}[t]{0.32\textwidth}
			\centering
			\includegraphics[width=\textwidth]{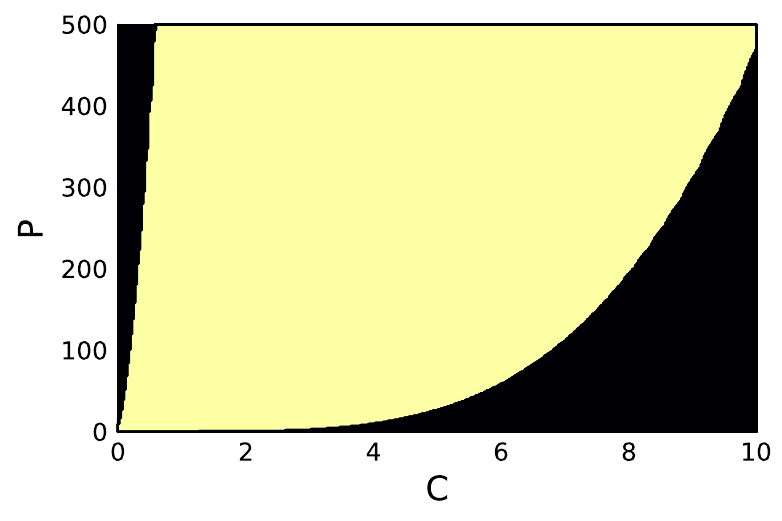}
			IMEX \DeC3 ($C$ vs $P$)
		\end{minipage} 
		\begin{minipage}[t]{0.32\textwidth}
			\centering
			\includegraphics[width=\textwidth]{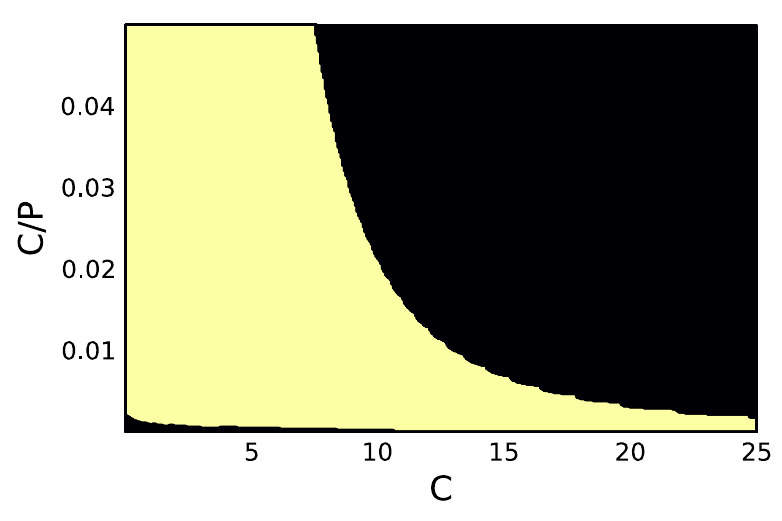}
			IMEX \DeC3 ($C$ vs $E_P$)
		\end{minipage}
		\begin{minipage}[t]{0.32\textwidth}
			\centering
			\includegraphics[width=\textwidth]{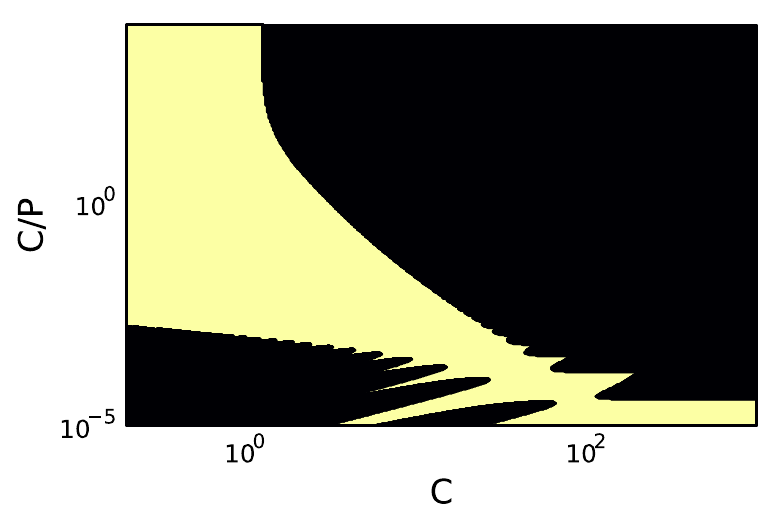}
			IMEX \DeC3 ($C$ vs $E_P$, log)
		\end{minipage}
		\caption{Stability areas for $[\DeC, \eq,2,A_1, B_3]$ and $[\DeC, \eq,3,A_1, B_3]$  with third order upwind dispersion operator}
		\label{fig: disp_IIMEXDeC2/3_GLB_CvsE_upwind}
	\end{figure}
\end{example}
These exemplary stability regions hold for most of the considered cases, i.e. all methods of order 2 do not have the instability areas for small $C$ and large $P$, as well as the IMEX ADER methods with equispaced nodes until order 4 and all IMEX ADER methods with Gauss-Lobatto nodes. 
Remark that these are exactly the methods which seem to be A-stable in their implicit ODE application as discussed in section~\ref{sec: stability_analysis_ODE}. 
All remaining methods possess this unfavorable stability region. 


\subsubsection{Results for IMEX DeC, sDeC and ADER}
In this section, we present the analysis results as displayed in Example~\ref{exa: disp_displaying_stability_upwind}, varying numerical methods. 
\begin{figure}
	\centering
	\includegraphics[width=0.4\textwidth,trim={230 340 30 22}, clip]{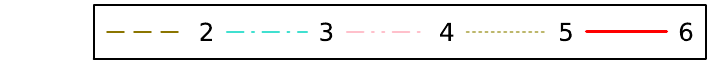}\\
	\begin{minipage}[t]{0.32\textwidth}
		\centering
		\includegraphics[width=\textwidth]{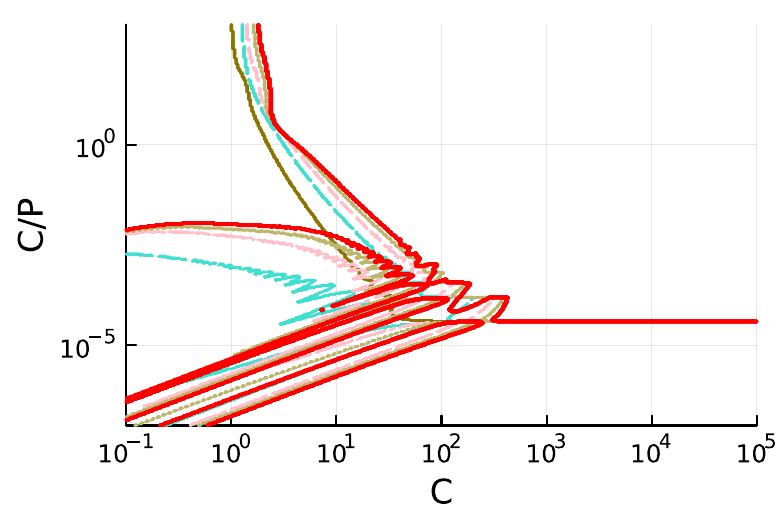}
		\small$[\DeC, \GLB,k, A_1,B_3]$\par
	\end{minipage}
	\begin{minipage}[t]{0.32\textwidth}
		\centering
		\includegraphics[width=\textwidth]{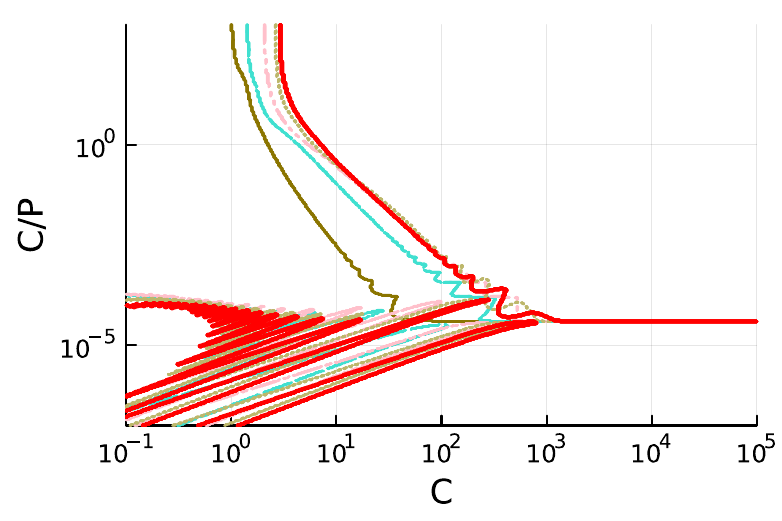}
		\small$[\sDeC, \GLB,k, A_1,B_3]$\par
	\end{minipage}
	\begin{minipage}[t]{0.32\textwidth}
		\centering
		\includegraphics[width=\textwidth]{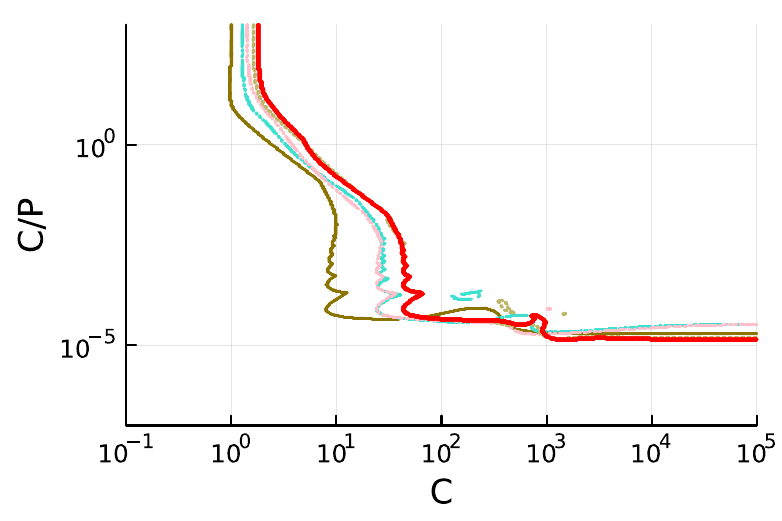}
		\small$[\ADER, \GLB,k, A_1,B_3]$\par
	\end{minipage}
	\caption{Stability areas for orders 2 to 6 with GLB nodes, the upwind scheme of \eqref{eq: shu_upwind_dispersion} for the dispersion and an first order backward scheme for the advection term}
	\label{fig: disp_allRK}
\end{figure}
We proceed now studying the stability regions increasing the order of the time scheme only, keeping fixed the advection and dispersion operators ($A_1$ and $B_3$), later on we also increase the accuracy of the advection and dispersion operators.
In Figure~\ref{fig: disp_allRK}, we can observe the stability regions changing the time scheme order from 2 to 6 for GLB nodes. 
For DeC methods of order larger than 2, we cannot  not provide bounds that guarantee the stability for small $C$ and $E_P$.
Anyway, away from this area, we observe stable regions for both $C\leq C_0$ with $C_0$ values similar to the ones of the advection--diffusion section, see Table~\ref{tab:CE_values}, and for $E_P\leq E_{P,0}$ with $E_{P,0}\approx 4\cdot 10^{-5}$ independently on the DeC method used.
In general, sDeC guarantees more stability in the region with $C\in [1,10]$ and $C/P\in [10^{-4},10^{-2}]$. The differences between equispaced and GLB are not so relevant.

On the other hand, IMEXADER with GLB nodes is very stable and there are clear bounds $C\leq C_0\leq 3$ and $E_P\leq E_{P,0}\leq 10^{-4}$ that guarantee stability. Moreover, there is a large stability area for large $C\leq 10$ and not so small $E_P$.
On the contrary, IMEXADER with equispaced points (figure available in the repository \cite{ourrepo}) for order more than 4 is much more unstable and only the are with both $C\leq C_0$ and $E_P \geq E_{P,0}$, which quite restrictive.

Again, the behavior of all these schemes reflects the A-stability property of the corresponding implicit methods.

\begin{figure}
	\centering
	\includegraphics[width=0.4\textwidth,trim={230 340 30 22}, clip]{pdf/pdepics/legends/colors_a-d_new_horiz_2-6_no_order.pdf}\\
	\begin{minipage}[t]{0.325\textwidth}
		\centering
		\includegraphics[width=\textwidth]{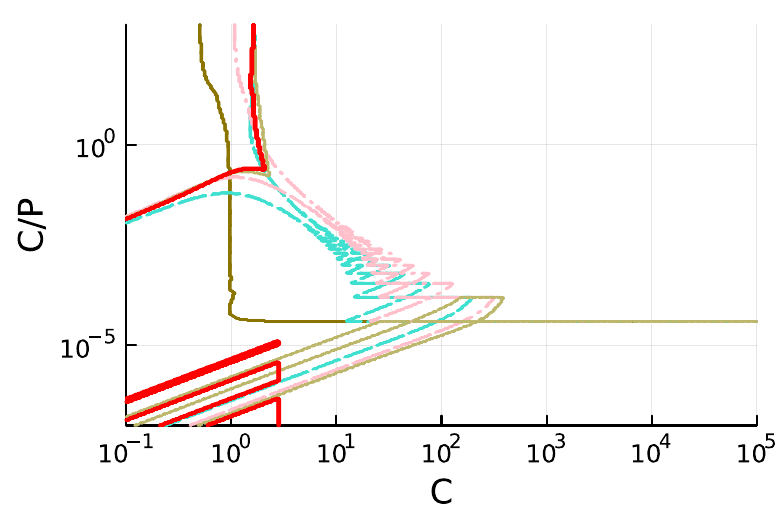}
		\small$[\DeC, \GLB,k,A_k,B_3]$\par
	\end{minipage}
	\begin{minipage}[t]{0.325\textwidth}
		\centering
		\includegraphics[width=\textwidth]{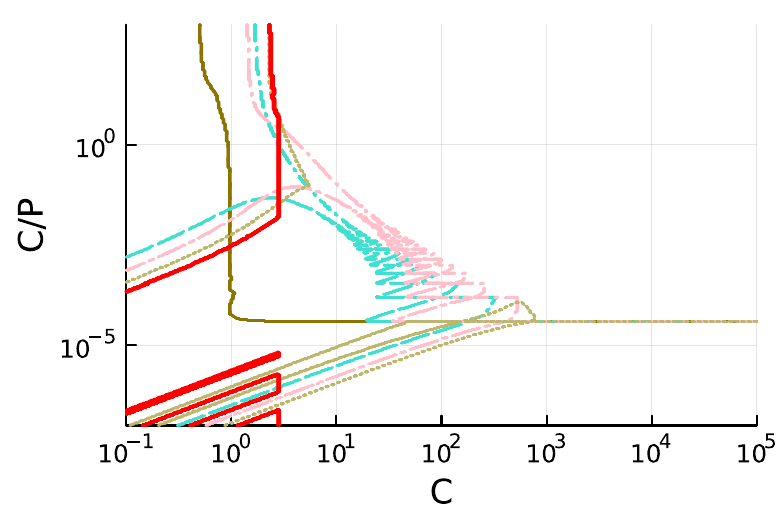}
		\small$[\sDeC, \GLB,k,A_k,B_3]$\par
	\end{minipage}
	\begin{minipage}[t]{0.325\textwidth}
		\centering
		\includegraphics[width=\textwidth]{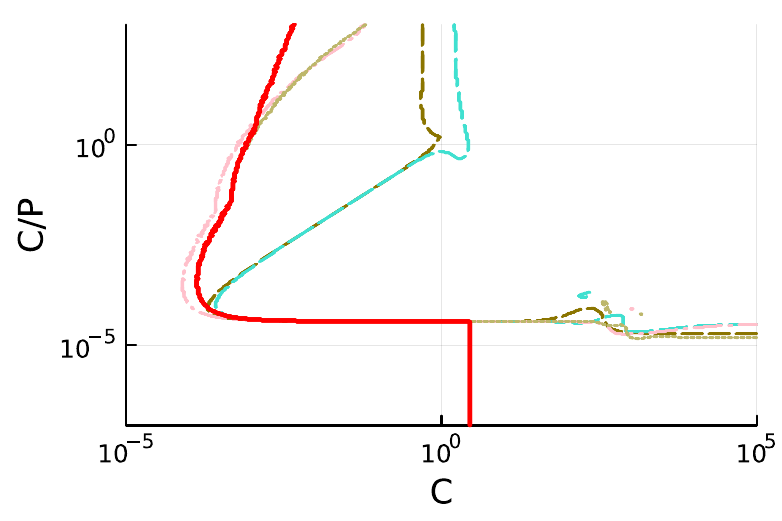}
		\small$[\ADER, \GLB,k,A_k,B_3]$\par
	\end{minipage}
	\caption{Stability areas varying orders 2 to 6 of the advection scheme and time scheme.}
	\label{fig: disp_alladv_GLB}
\end{figure}
In Figure \ref{fig: disp_alladv_GLB}, we check the stability regions varying the advection and time order of accuracy for DeC, sDeC and ADER GLB methods. 
We find a loss of stability by increasing the order. We already see a slight reduction of our border $C_0$ for order 2 and 3. Going onto orders 4, 5 and 6, we obtain way larger unstable regions, in particular in the low $E_P$ region that was stable in the previous test.
For IMEXADER, we observe a strong reduction of the stability region in the low $C$ high $E_P$ region (observe the different scale) as well as in low $E_P$ values for order 6.

\begin{figure}
	\centering
	\includegraphics[width=0.4\textwidth,trim={230 340 30 22}, clip]{pdf/pdepics/legends/colors_a-d_new_horiz_2-6_no_order.pdf}\\
	\begin{minipage}[t]{0.325\textwidth}
		\centering
		\includegraphics[width=\textwidth]{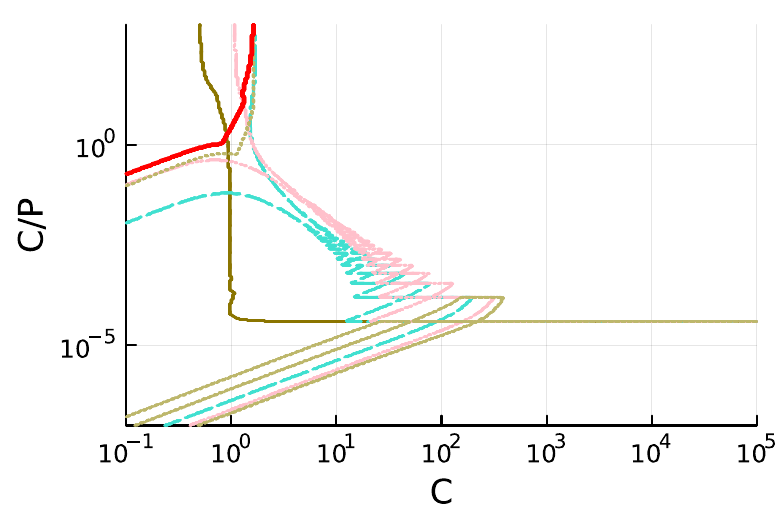}
		\small$[\DeC, \GLB,k, A_k,B_{\lceil k/2 \rceil}]$\par
	\end{minipage}
	\begin{minipage}[t]{0.325\textwidth}
		\centering
		\includegraphics[width=\textwidth]{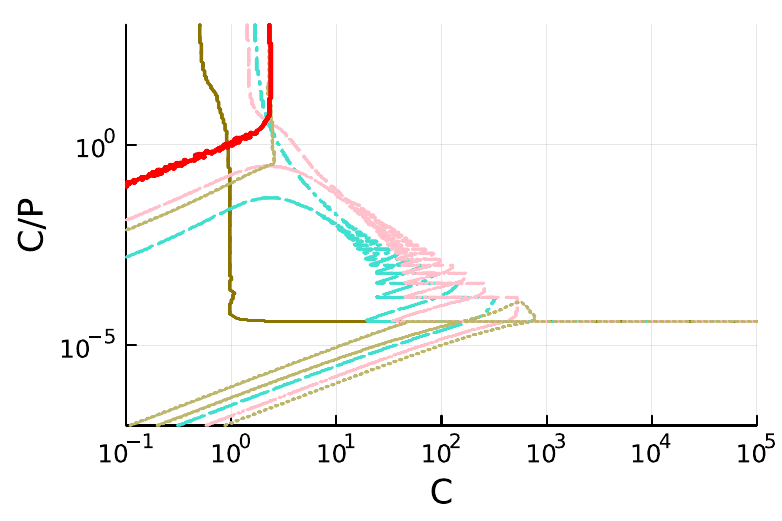}
		\small$[\sDeC, \GLB,k, A_k,B_{\lceil k/2 \rceil}]$\par
	\end{minipage}
	\begin{minipage}[t]{0.325\textwidth}
		\centering
		\includegraphics[width=\textwidth]{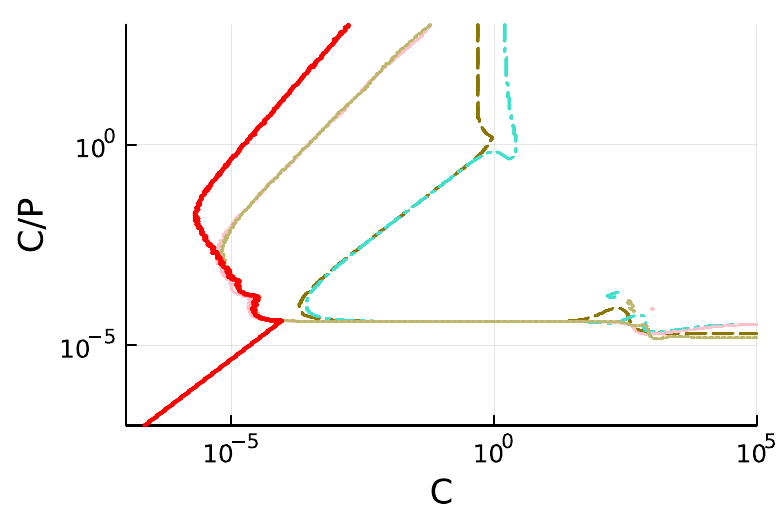}
		\small$[\ADER, \GLB,k, A_k,B_{\lceil k/2 \rceil}]$\par
	\end{minipage}
	\caption{Stability areas for orders 2 to 6 with GLB (top) and equi (bottom) nodes, dispersion with stencil $[-\lceil (k+2)/2 \rceil +1 ,\lceil (k+2)/2\rceil]$, advection of order $k$ as in \eqref{eq: r-s_scheme} and time scheme of order $k$. Notice the difference in scales for $C$  between \ADER~and \DeC.}
	\label{fig: disp_allall}
\end{figure}
In Figure~\ref{fig: disp_allall}, we increase all together the order of all operators.
In particular, for a given order $k$ for the dispersion operator we use the optimal stencil with support $[-\lceil (k+2)/2 \rceil +1 ,\lceil (k+2)/2\rceil]$, similar to the upwinding of \eqref{eq: shu_upwind_dispersion}. To compute the coefficients of the dispersion operator, we have used the tool \cite{fdcc}. We are working with the dispersion stencil of order $3,\,5$ and $7$. 
The Fourier symbol of the stencils of order $5$ and $7$ take values very close to the imaginary axis also for quite large imaginary values. This means that schemes that are not A-stable will poorly perform on such higher orders. On the other hand, the dispersion operator of order 3 is only tangent to the imaginary axis, but it quickly has real values away from zero for large imaginary values. This will influence the stability regions.

We immediately see that the stability regions shrink and for high order DeC (greater than 5), we lose the stability region $E_P\leq E_{P,0}$. For ADER methods again the region with $C\leq C_0$ and moderate $E_P$ shrinks quite a lot and for order 6 the stability region $E_P\leq E_{P,0}$ essentially disappears. For the equispaced case, as for the time only case, from order 5 on there is no stable region for low $E_P$ \cite{ourrepo}.

We conclude that the observed IMEX methods combined with the finite difference stencils for the spatial discretization do not possess a spatial-independent condition on the time step (as for the diffusion case). 
Still, in most of the methods a classical stability region for $C\leq C_0$ and $E_P\geq E_{P,1}$, i.e., $P\geq C/E_{P,1}$, is observable, while a time independent stability region for $E_P\leq E_{P,0}$ is present only in few low order cases and it is really linked to the used spatial discretization.

	\section{Numerical tests}
	\label{sec:numerics}
	\subsection{ODE tests}
In this section, we will apply the introduced explicit, implicit and IMEX methods on ordinary differential equations to compare them with the theoretical results obtained before. Remark that our implementation takes usage of the linearized versions as in \eqref{eq: L1_linearized_ADER}. This is exact for linear systems, but has to be kept in mind while considering nonlinear ODEs. 
\subsubsection{Validation of the ImsDeC stability region}
\begin{figure}
	\centering
	\begin{minipage}[t]{0.45\textwidth}
		\includegraphics[width=\textwidth]{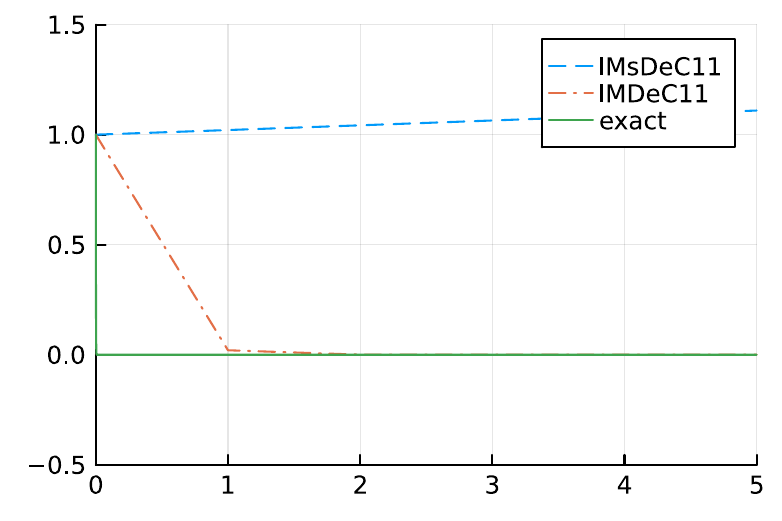}
	\end{minipage}
	\begin{minipage}[t]{0.45\textwidth}
		\includegraphics[width=\textwidth]{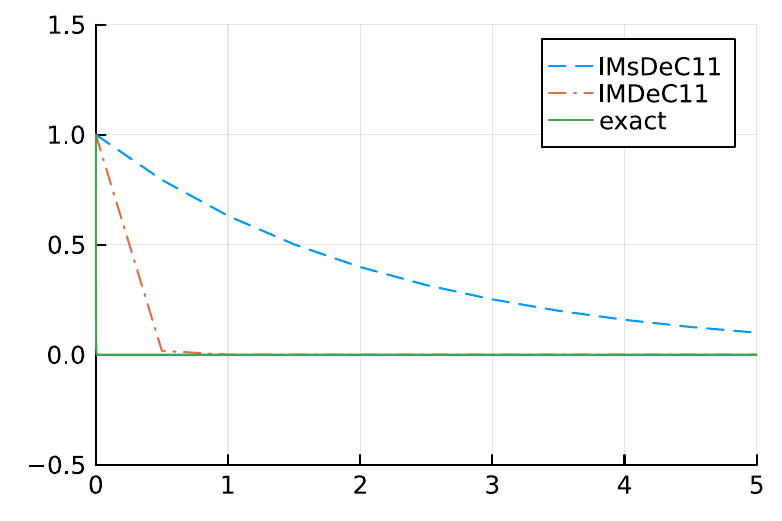}
	\end{minipage}\\
	\begin{minipage}[t]{0.45\textwidth}
		\includegraphics[width=\textwidth]{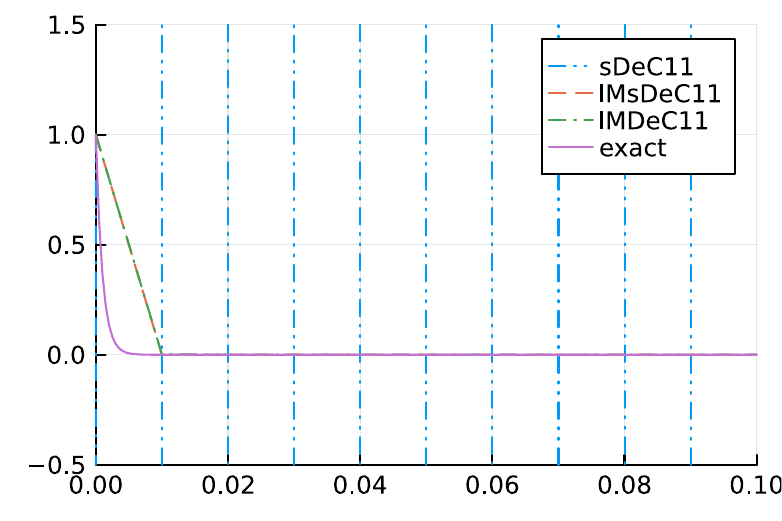}
	\end{minipage}
	\begin{minipage}[t]{0.45\textwidth}
		\includegraphics[width=\textwidth]{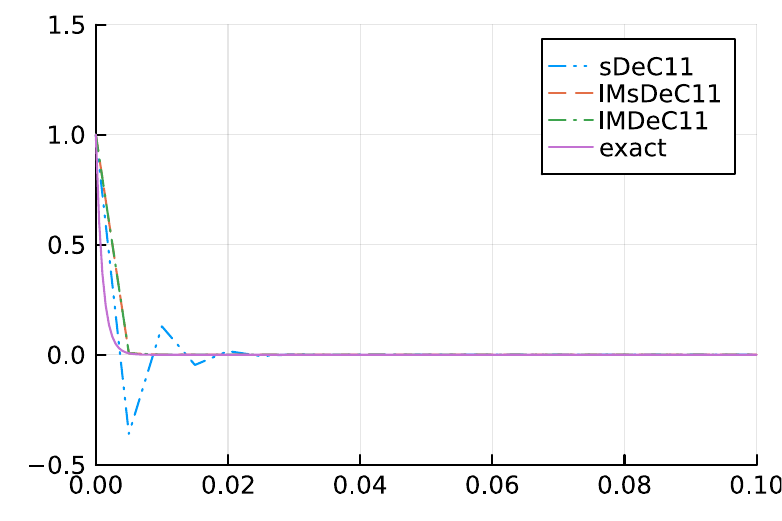}
	\end{minipage}
	\caption{Solving \eqref{eq: ImsDeC_linear_scalar} using ImsDeC11 with $h=1$ (top left), $h=0.5$ (top right), $h=0.01$ (bottom left) and $h=0.005$ (bottom right).}
	\label{fig: exaImsDeC}
\end{figure}
As seen in Figures~\ref{fig: ODEIMDeCADER} and \ref{fig: exaImsDeC_high}, the ImsDeC has an unexpected behavior by being the only considered pure implicit method here, which is not \textit{almost A-stable} for high orders. 
We want to validate this statement applying the method on some ODEs. \\
As example we take the ImsDeC11 with Gauss-Lobatto nodes, because it has the smallest stability region out of all ImsDeC methods. We observe that the limited, stable region has its left border approximately at $z=900+0i$. \\
Solving at first the linear, scalar ODE
\begin{equation}\label{eq: ImsDeC_linear_scalar}
\partial_t y(t) =-10^3y(t), y(0)=1,
\end{equation}
with different step sizes $h_1=1.0, h_2=0.5, h_3=0.02, h_4=0.01$, we expect the ImsDeC method for $h_1$ to be unstable and for the remaining 3 $h_i$ to be stable, because 
$z_1=\lambda\cdot h_1=-10^3\cdot 1=-10^3 \notin S,$ while $|z_i|=|\lambda h_i|<900$ and belong to $S$ for $i=2,3,4$,
where $S$ denotes the stability region of the ImsDeC11 with Gauss-Lobatto nodes.
The numerical results are shown in figure~\ref{fig: exaImsDeC}. We observe that for $h_1$, the method diverges from the decaying exact solution $y(t)=e^{-10^3t}$, which is in agreement with $z_1 \notin S$. 
In the second case for $h_2$, we see the solution curve mimicking the decaying behavior, which validate the fact that $z_2\in S$. As a comparison, we also plot the ImDeC11 with Gauss-Lobatto nodes which is known as almost A-stable and shows appropriate results. As seen in figure~\ref{fig: exaImsDeC}, of course for both $z_3$ and $z_4$, we obtain qualitative good solutions. 
We also display the sDeC11 in these figures and observe that their stability properties are still much worse than these of the ImsDeC11, as its stability region ends around $z\approx 6$, which make $h_c=\frac{6}{10^3}$ the threshold for a stable discretization.

We can summarize that the numerically calculated stability regions are validated through this experiment. 

\begin{figure}
	\centering
	\begin{minipage}[t]{0.45\textwidth}
		\includegraphics[width=\textwidth]{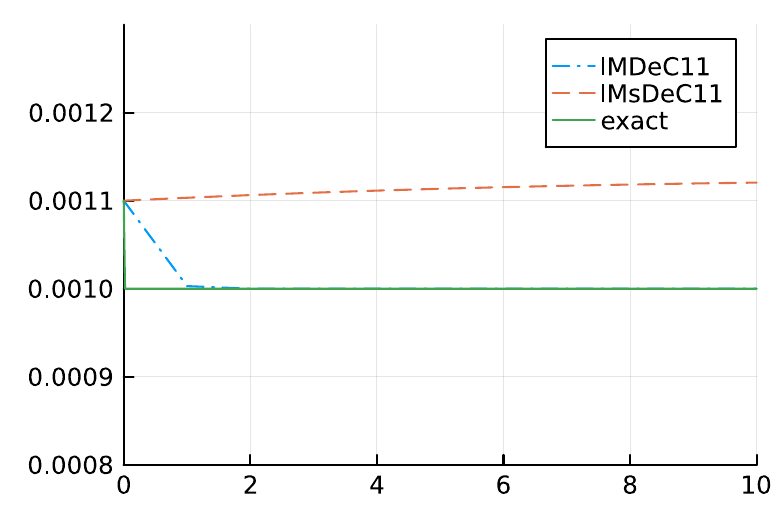}
	\end{minipage}
	\begin{minipage}[t]{0.45\textwidth}
		\includegraphics[width=\textwidth]{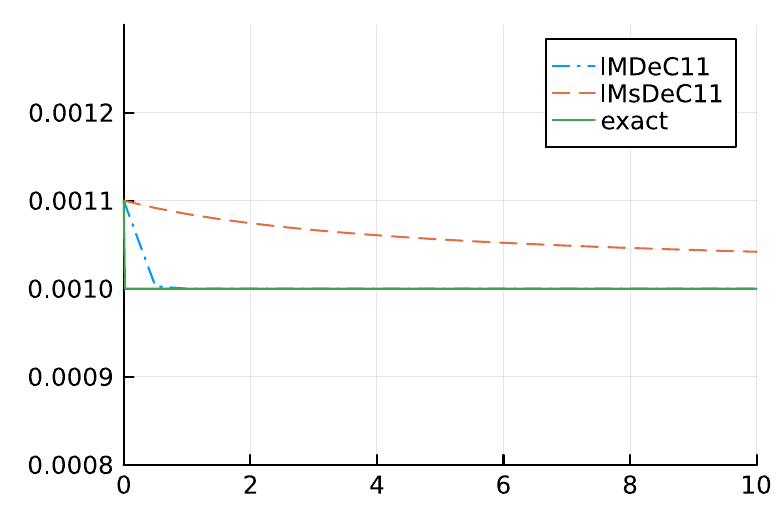}
	\end{minipage}
	\caption{Solving equation \eqref{eq: ImsDeC_nonlinear_scalar} using ImsDeC11 with $h=1.0$ (left) and $h=0.5$ (right).}
	\label{fig: exaImsDeC4}
\end{figure}
Moving to a nonlinear example, we test the ImsDeC11 with Gauss-Lobatto nodes for the nonlinear ODE
\begin{equation}\label{eq: ImsDeC_nonlinear_scalar}
\partial_t y(t)=-10^6 \lvert y(t) \rvert \cdot y(t)+1, \quad y(0)=\frac{1}{\sqrt{10^6}}
\end{equation} 
and compare it again with the analogue ImDeC. We observe in Figure \ref{fig: exaImsDeC4} on the left that the ImDeC handles the stiff equation well, while the ImsDeC again has an unstable behavior for the step $h_1=1$. This coincides with the stability region because for our nonlinear equation, an equivalent to $\lambda$ could be roughly obtained by linearizing the equation, so that $\lambda:=-10^6\lvert y(t) \rvert \approx -10^6\cdot \frac{1}{\sqrt{10^6}}=-10^3$ and therefore  $z_1\approx-10^3\notin S$. Nevertheless, after refining to $h_2=0.5$ we have $z_2\approx -500$ and we expect again stability for the ImsDeC, which can be seen on the right of Figure \ref{fig: exaImsDeC4}.

Summarizing we can observe that the non-A-stable ImsDeC performs worse than the ImDeC for stiff applications, but may still be applicable to mildly stiff problems providing better results than explicit sDeC.

\subsubsection{A stiff linear example for the IMEX methods}
We consider the second order ODE
\begin{equation}
\partial_{tt} y(t)=-2\partial_t y(t)-2501 y(t),\quad y(0)=1, \; \partial_y(0)=0,
\end{equation}
which can be rewritten as a system of ODEs
\begin{equation}\label{eq: ODE_linear_ImEx}
\begin{aligned}
\partial_t u_1(t)&=-2 u_1(t)-2501u_2(t), \quad &u_1(0)=1, \\
\partial_t u_2(t)&=u_1(t), &u_2(0)=0.
\end{aligned}
\end{equation}
The exact solution is given by 
\begin{equation*}
\vec{u}(t)=\left(\begin{array}{c}
y(t)\\
\partial_t y(t)
\end{array}\right)	=\left(\begin{array}{c}
\frac{1}{50}e^{-t}\left(\sin(50t)+50\cos(50t)\right) \\
-\frac{2501}{50}e^{-t}\sin(50t)
\end{array}\right),
\end{equation*}
i.e., it shows a rapid oscillation and a slow transient part.
Therefore, we separate the right-hand side of \eqref{eq: ODE_linear_ImEx} by 
\begin{equation*}
F_I(\vec{u}(t))=\left(\begin{array}{c}
-2501 u_2(t)\\
u_1(t)
\end{array}\right), \quad 
F_E(\vec{u}(t))=\left(\begin{array}{c}
-2u_1(t)\\
0
\end{array}\right)
\end{equation*}
to treat $F_I$ implicitly and $F_E$ explicitly.\\
The numerical solutions for IMEX ADER, IMEX DeC and to comparison ADER are shown in Figure \ref{fig: exaImEx_linear}, all methods of order 5 and use equispaced nodes. As before, we observe stability for the IMEX methods while the explicit ADER does not converge towards the exact solution.
Moreover, it is remarkable that the IMEX methods are able to catch the slow transient part even when the discretization scale does not allow to represent the fast oscillatory behavior.

\begin{figure}
	\centering
	\includegraphics[width=0.49\textwidth]{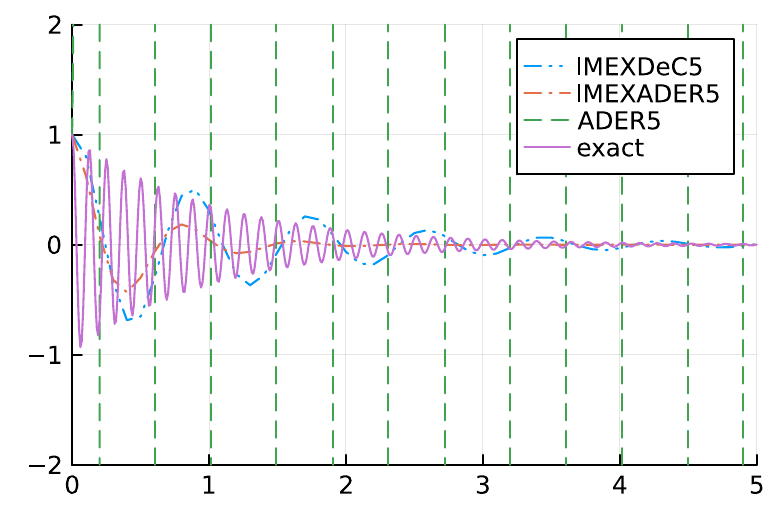}
	\caption{Solving equation \eqref{eq: ODE_linear_ImEx} with different methods using equispaced nodes with $h=0.1$.}
	\label{fig: exaImEx_linear}
\end{figure}
\subsection{PDE tests}
Finally, we assess the accuracy of the proposed schemes on the advection--diffusion equation \eqref{eq: A-D_equation}. We consider the domain $\Omega=[0,2\pi]$, $u_0(x)=\sin(x)$, $a=1$ and $C=\frac{a \Delta t}{\Delta x } =0.4$. We fix for all simulations $E=0.5$, leading to a variable $d=\frac{a^2 \Delta t}{E}$. We test ADER, DeC and sDeC with {\GLB} nodes for variable orders from 2 to 5 and meshes with sizes from $N=2^5$ to $N=2^{11}$. 
The spatial operators are generated using \cite{ranocha2021sbp}.
For all methods we obtain stable simulations. In Figure~\ref{fig:convergence_AD}, we observe how all methods converge to the exact solution $u(t,x)=e^{-dt}\sin(x-at)$ with the expected order of accuracy.

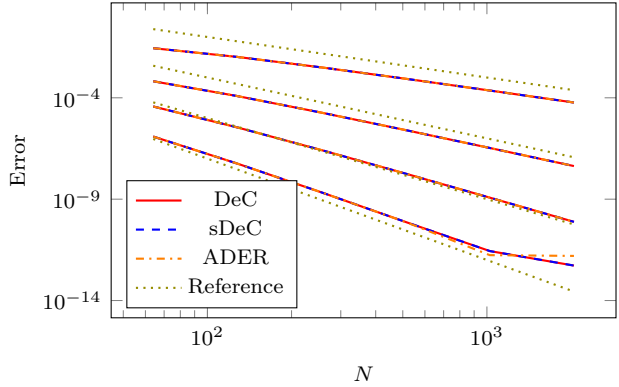
\begin{figure}
	\centering
	\begin{tikzpicture}  
		\begin{axis}[ymode=log,
			xmode=log,
			xlabel={$N$},
			ylabel={Error},
			legend pos=south west,
			width=0.5\textwidth,
			height=0.35\textwidth,
			style={font=\footnotesize}]
						
			\foreach \order in {2,...,5}{
				\addplot[mark size=1.3pt, red, thick] table [mark=triangle*,x=N, y=order\order, col sep=comma] {pdf/pdepics/numerics/dec_error.csv};
			}

			\foreach \order in {2,...,5}{
				\addplot[mark size=1.3pt, blue, thick, dashed] table [mark=triangle*,x=N, y=order\order, col sep=comma] {pdf/pdepics/numerics/sdec_error.csv};
			}
			
			\foreach \order in {2,...,5}{
				\addplot[mark size=1.3pt, orange, thick, dashdotted] table [mark=triangle*,x=N, y=order\order, col sep=comma] {pdf/pdepics/numerics/ader_error.csv};
			}

			\foreach \order in {2,...,5}{
				\addplot[mark size=1.3pt, olive, thick, dotted] table [mark=triangle*,x=N, y expr=1000*\thisrow{N}^-\order, col sep=comma] {pdf/pdepics/numerics/sdec_error.csv};
			}
			\legend{DeC,,,,sDeC,,,,ADER,,,,Reference,,,}
%
		\end{axis}
	\end{tikzpicture}
	\caption{Error convergence on an advection diffusion problem for DeC, sDeC, ADER and dotted reference error line $N^{-K}$, for orders from $K=2$ to $K=5$ (from top to bottom).}\label{fig:convergence_AD}
\end{figure}

	\section{Conclusions}
	\label{sec:conclusion}

In our study, we have analyzed the implicit and implicit-explicit ADER and DeC methods in terms of their stability properties. To this end, we initially reformulated them as RK methods and investigated them based on the selected order, method, and quadrature nodes. Unlike our prior work \cite{Han_Veiga_2021}, which focused on explicit versions, we observed significant variations in stability behavior, ranging from A-stable to bounded stability regions. In general, the implicit (and implicit-explicit) ADER methodology demonstrated greater stability compared to the DeC framework.

After the ODE case, we further extended  our analysis to the PDE case, focusing on advection-diffusion and advection-dispersion equations inspired by previous works \cite{TanChenShu_ImEx_Stability, WangShuZhang_LDG1_2015}. For space discretization, we utilized up-to-date finite difference stencils and derived CFL-like stability conditions through von Neumann stability analysis. 
Notably, we expanded upon the investigation of \cite{TanChenShu_ImEx_Stability} by introducing two new auxiliary coefficients for the advection-diffusion equation. 
These coefficients yielded equivalent conditions to those in \cite{TanChenShu_ImEx_Stability}, but they do not depend on the spatial discretization. 
We established precise boundaries for relevant coefficients for advection-diffusion and advection-dispersion and offered recommendations regarding the suitability of specific schemes.

In the future, further potential research directions include exploring different space discretization methods and focusing on continuous and discontinuous Galerkin formulations \cite{ortleb2023stability, zbMATH07137361}. Additionally, investigating stability in the context of nonlinear problems would be desirable, particularly focusing on the entropy production of such schemes as suggested by previous works \cite{zbMATH07086321, zbMATH06928679, oeffner2020}, or
using the add-and-subtract version of the implicit ADER and DeC to study the stability in the same style of \cite{tan2022stability}.

\section*{Acknowledgments}

	P.Ö. is supported by the DFG within SPP 2410, project  525866748 (OE 661/5-1) and under the personal grant 520756621 (OE 661/4-1).  P.Ö. gratefully acknowledges the 		 	support of the Gutenberg Research College, JGU Mainz.
	D. T. is funded by a SISSA Mathematical Fellowship and is a member of the Gruppo Nazionale Calcolo Scientifico-Istituto Nazionale di Alta Matematica (GNCS-INdAM).
	L. P. is supported by the DFG within SPP 2410, project number 526031774 and the Daimler und Benz Stiftung (Daimler and Benz foundation), project number 32-10/22.\\
	D. T. and P. Ö. wish to express their gratitude to prof. Giovanni Russo, who brought to their attention various seminal works on the stability of IMEX schemes. Engaging in discussions with him has deepened our comprehension in this domain.



\section*{Declaration of competing interest}

The authors declare that they have no known competing financial
interests or personal relationships that could have appeared to
influence the work reported in this paper.

\section*{Data availability}

Results for some of the proposed methods are listed online in our repository
\cite{ourrepo} and the code is available at request.

\bibliographystyle{elsarticle-num-names}
	\bibliography{literature}

\end{document}